\documentclass[a4paper,10pt, oneside]{article}
\usepackage{bm,amsmath,amssymb,amsthm,mathrsfs,graphicx}
\usepackage{amsmath,amssymb,amsthm}
\usepackage{amssymb}
\usepackage{mathrsfs}
\usepackage[affil-it]{authblk}
\usepackage[inline]{enumitem}
\usepackage[font=small,labelfont=md,textfont=it]{caption}
\usepackage{floatrow,float}
\usepackage[titletoc, title]{appendix}
\usepackage[colorlinks,linkcolor=blue,citecolor=blue, hyperindex,pagebackref,bookmarks]{hyperref}
\usepackage{etoolbox}
\usepackage{longtable}
\usepackage{diagbox}
\usepackage{booktabs,makecell,multirow}
\usepackage[capitalise,nosort]{cleveref}
\usepackage{cases,color}

\numberwithin{equation}{section}

\newtheorem{myTheo}{Theorem}[section]
\newtheorem{myLem}{Lemma}[section]

\usepackage{indentfirst}
\usepackage{stmaryrd}
\usepackage{subfigure}
\usepackage[utf8]{inputenc}
\usepackage{pgf,tikz}
\usepackage{threeparttable}

\usepackage[colorlinks,linkcolor=blue,citecolor=blue]{hyperref}

\usetikzlibrary{arrows}
\newcommand{\tabcaption}{\def\@captype{table}\caption}

\newtheorem{rem}{Remark}[section]

\newtheorem{exam}{Example}[section]

\newcommand{\snmii}[1]
{
	\left\vert\kern-0.25ex
	\left\vert\kern-0.25ex
	\left\vert
	#1
	\right\vert\kern-0.25ex
	\right\vert\kern-0.25ex
	\right\vert
}

\numberwithin{equation}{section}

\linespread{1.0}
\textheight 24 true cm
\textwidth 16 true cm
\topmargin 0pt \oddsidemargin 5pt \evensidemargin 0pt

\date{}
\title{{\Large \bf }}

\begin{document}
	\title
	{
		\Large\bf Robust globally divergence-free HDG finite element method for steady thermally coupled incompressible MHD flow
		\thanks
		{
			This work was supported in part by National Natural Science Foundation
			of China  (12171340).
		}
	}
	\author
	{
		Min Zhang$^{1,2}$ \thanks{ 1. Institute of Mathematics, Henan Academy of Sciences, Zhengzhou 450046, China.
			2. College of Mathematies and Information Science, Henan Normal University, Xinxiang, 453007, China Email: 894858786@qq.com }, \
		Zimo Zhu \thanks{Chengdu aircraft design and Research Institute, Chengdu 610041, China.
			Email:
			309463165@qq.com }, \
		Qijia Zhai$^{1,2}$ \thanks{
			1. School of Mathematics, Sichuan University, Chengdu 610064, China.   2. Computer, Electrical and Mathematical Science and Engineering Division, King Abdullah University of Science and Technology, Thuwal 23955, Saudi Arabia.
			Email:zhaiqijia@stu.scu.edu.cn}, \
		Xiaoping Xie \thanks{Corresponding author. School of Mathematics, Sichuan University, Chengdu 610064, China.
			Email: xpxie@scu.edu.cn}
		
	}
	\date{}
	\maketitle
	
	\begin{abstract}
		This paper develops an hybridizable discontinuous Galerkin (HDG) finite element method of arbitrary order for the steady thermally coupled  incompressible Magnetohydrodynamics (MHD) flow. The HDG scheme uses piecewise polynomials of degrees $k(k\geq 1),k,k-1,k-1$ and $k$ respectively for the approximations of  the velocity, the magnetic field, the  pressure, the  magnetic pseudo-pressure, and the temperature in the interior of elements,
		 and uses piecewise
		polynomials of degree $k$ for their numerical traces on the interfaces of elements. The method is shown to yield globally divergence-free approximations of the velocity and magnetic fields. Existence and uniqueness results  for  the discrete scheme  are given and   optimal a priori error estimates are derived.  Numerical experiments are provided  to verify the  obtained theoretical results.

		\vskip 0.2cm\noindent
		{\bf Keywords:} Thermally coupled  incompressible Magnetohydrodynamics flow, hybridizable discontinuous Galerkin method, Globally divergence-free, Error estimate
	\end{abstract}
	
	\section{Introduction}
	
	Magnetohydrodynamics (MHD) equations describe the  basic physics laws  of electrically conducting fluid flow interacting with magnetic fields,
	and are widely used in engineering areas; see, eg. several monographs
	\cite{GCL2006,D2001, Muller2001, Shercliff1965, Walker1980} and the  references therein.
	In this  paper we  consider the steady thermally coupled incompressible MHD  model,  which is  a coupled  system of    incompressible
	Navier-Stokes equations,    Maxwell equations and  a thermal equation.

	Let $\Omega \subset \mathbb{R}^{d}(d=2,3)$ be a polygonal/polyhedral domain. The considered steady thermally coupled incompressible MHD  model reads as follows:
	find the velocity vector $ \mathbf{u}=(u_1,u_2,...,u_d)^\mathrm{T}$,  the pressure
	$p$,  the magnetic field
	$\mathbf{B}=(B_1,B_2,...,B_d)^\mathrm{T}$, the magnetic pseudo-pressure
	$r$  
	and  the temperature $ T $, such that
	\begin{subequations}\label{mhd1}
		\begin{align}
			-\frac{1}{H^{2}_a}\Delta \mathbf{u}+\frac{1}{N}(\mathbf{u}\cdot \nabla) \mathbf{u}+\nabla p - \frac{1}{R_{m}} \nabla \times\mathbf{B}\times \mathbf{B}=\mathbf{f}_1 - \frac{G_{r}}{NR^{2}_{e}}T{\mathbf{g}},\quad
			&\mbox{in}\ \Omega,
			\label{mhd1:sub1}\\
			\nabla \cdot \mathbf{u}=0,\quad&\mbox{in}\ \Omega,
			\label{mhd1:sub2}\\
			\frac{1}{R_{m}}\nabla\times \nabla\times\mathbf{B}-\nabla\times(\mathbf{u}\times \mathbf{B})+\nabla r=\mathbf{f}_2,\quad&\mbox{in}\ \Omega,
			\label{mhd1:sub3}\\
			\nabla \cdot \mathbf{B}=0,\quad&\mbox{in}\ \Omega,
			\label{mhd1:sub4}\\
			-\frac{1}{P_{r}R_{e}}\Delta T+(\mathbf{u}\cdot\nabla)T=f_3,\quad
			&\mbox{in}\ \Omega,
			\label{mhd1:sub5}
		\end{align}
	\end{subequations}
	subject to the homogenous  boundary conditions
	\begin{equation}\label{mhd6}
		\mathbf{u}|_{\partial\Omega}=0,\quad	(\mathbf{B}\times\mathbf{n})|_{\partial\Omega}=0,\quad      T|_{\partial\Omega}=0, \quad
		r|_{\partial\Omega}=0.	 
	\end{equation}
	Here   
	$H_{a}$ is the Hartmann number, $N$    the Interaction parameter, $R_{e}$    the Reynolds number, $P_{r}$  the Prandtl number, $R_{m}$   the Magnetic Reynolds number, 
	$G_{r}$   the Grashof number,   $\mathbf{g}$  the vector of gravitational acceleration
	with $\mathbf{g}=(0,1)^{\mathrm{T}}$ when $d=2$ and $\mathbf{g}=(0,0,1)^{\mathrm{T}}$ when $d=3$,
	$\mathbf{f}_1 
	$ and $\mathbf{f}_2
	$ 
	are the forcing functions, and $f_3
	$ denotes the heat source term. We refer to \cite{BMV2010,M1994,M1995} for the study of the existence and uniqueness of weak solutions to  related steady thermally coupled incompressible MHD models. 
	
	There are limited works in the literature on  the finite element analysis of  the steady thermally coupled incompressible MHD equations. Meir  \cite{M1995} proposed	a Galerkin mixed finite element method     and established optimal error estimates. 
	Codina and   Hern\'{a}ndez	\cite{CH2011}  developed  a stabilized finite element method. 
Yang and Zhang  \cite{YZ2020} analyzed  three 	iteration algorithms, i.e. the Stokes, Newton and Oseen iterations,  based on  conforming mixed finite element discretization, and Dong and Su \cite{YZ2020} also analyzed  the three 	iteration algorithms based on 
	charge-conservative mixed finite element  discretization.  
	We refer to  \cite{LD2022,R2018,R2019} for several works on fully discrete mixed finite element methods for unsteady thermally coupled incompressible MHD model equations.

	It is well-known that the two divergence
	constraints  \eqref{mhd1:sub2} and \eqref{mhd1:sub4}  on  the velocity and magnetic fields in the steady thermally coupled incompressible MHD model \eqref{mhd1} are  corresponding to the conservation of mass and magnetic flux, respectively, and that  poor conservation of such    
	physical properties in the algorithm design   may lead to numerical instabilities \cite{ABLR2010,BB1980,JWP1996, JLMNR2017,  L2009, OR2004,T2000}. 
	%
	For  incompressible MHD equations, there have developed some divergence-free finite element methods, e.g.  the mixed  interior-penalty discontinuous Galerkin (DG)  method  with the exactly divergence-free velocity \cite{GLSW2010}, the central DG method  with the exactly divergence-free magnetic field  \cite{HMX2017,LX2012,LXY2011}, 
	the mixed DG method and an HDG method 
	 with  the exactly divergence-free velocity
	\cite{QS2017},
	the mixed DG method with  the exactly divergence-free velocity and  magnetic field  \cite{HLMZ2018}, the constrained transport finite element method  with  the exactly divergence-free velocity and  magnetic field \cite{LZZ2021}, and the weak Galerkin (WG) method with  the exactly divergence-free velocity and  magnetic field \cite{ZZX2023}.

	This paper is to extend the HDG method  to the steady thermally coupled incompressible MHD  model \eqref{mhd1}.
The HDG method, presented in \cite{CGL2009} for diffusion problems,
provides a unifying strategy for hybridization of finite element methods.
The resultant HDG methods preserve the advantages of standard DG methods and lead to discrete systems of significantly reduced sizes. We refer
to \cite{CX2024,CG2005,CNP2010,CS2014,FJQ2019,L2010,LS2016,MNP2011,NPC2010,NPC2011,PNC2010,QS2016,RW2018} for some HDG methods
for the Stokes equations, the Navier-Stokes equations and Stokes-like equations. In particular, in \cite{CX2024,RW2018} the same  technique, i.e.  introducing the pressure trace
on the inter-element boundaries as a Lagrangian multiplier so as to derive a
divergence-free velocity approximation, as  that used in \cite{CFX2016}, was adopted to
construct  globally divergence-free HDG methods for the   Navier-Stokes equations.

	Our HDG discretization for  \eqref{mhd1} and numerical analysis  are of  the following main features:
	\begin{itemize}
		\item
		The HDG scheme uses   piecewise polynomials of degrees $k(k\geq 1),k,k-1,k-1$, and $k$ to approximate respectively the velocity, the magnetic field, the pressure, the magnetic pseudo-pressure and the temperature
		in the interior of elements,  and applies piecewise polynomials of degree $k$ to approximate their numerical traces on
		the interfaces of elements. 
		
		\item The scheme is ``parameter-friendly" in the sense that no  ``sufficiently large"  stabilization parameters are  required. 
		
		\item The scheme yields   globally  exactly-divergence-free approximations of the velocity and magnetic fields, thus leading to pressure-robustness of the method.
		
		
	\item	Existence and uniqueness results  are established for  the discrete scheme.

		\item  Optima  error estimates are   obtained.
		
	\end{itemize}
	
	The rest of this paper is arranged as follows.
	Section 2 gives weak formulations of the model problem.
	Section 3 is devoted to the HDG finite element scheme and some preliminary results. In Section 4 we
	discuss the existence and uniqueness of the discrete solution. Section 5 derives a priori error estimates.
	Finally, We provide some numerical results.

	\section{Weak problem}
	
	\subsection{Notation}
	
	For any bounded domain $D\subset R^s(s=d,d-1)$, nonnegative  integer $m$ and real number $1\leq q<\infty$, let $W^{m,q}(D)$ and $W_0^{m,q}(D)$ be the usual Sobolev spaces defined on $D$ with norm $||\cdot||_{m,q, D}$ and semi-norm $|\cdot|_{m,q,D}$. In particular,   $H^m(D):=W^{m,2}(D)$ and $H^m_0(D):=W^{m,2}_0(D)$, with $||\cdot||_{m,D}:=||\cdot||_{m,2, D}$ and $|\cdot|_{m,D}:=|\cdot|_{m,2, D}$.   We use $(\cdot,\cdot)_{m,D}$ to denote the inner product of $H^m(D)$, with $(\cdot,\cdot)_D :=(\cdot,\cdot)_{0,D}$.
	When $D = \Omega$, we set $||\cdot||_m := ||\cdot||_{m,\Omega},|\cdot|_m := |\cdot|_{m,\Omega}$,
	and $(\cdot,\cdot):= (\cdot,\cdot)_{\Omega}$.
	Especially, when
	$D \subset R^{d-1}$ we use $\langle\cdot,\cdot\rangle_D$ to replace $(\cdot,\cdot)_{D}$.
	For any integer
	$k \geq 0$, let $P_k(D)$ denote the set of all polynomials on $D$ with degree no more than $k$. We also need the following spaces:
	\begin{eqnarray*}
		&&L_0^2(\Omega):=\left\{
		v\in L^2(\Omega): \ (v,1)=0
		\right\}, \\
		&&\mathbf{H}(\text{div},\Omega):=\left\{
		\mathbf{v}\in  [{L}^2(\mathcal{D})]^d:\ \nabla\cdot \mathbf{v}\in L^2(\mathcal{D})
		\right\},\\
		&&\mathbf{H}(\text{curl};\Omega):=\{\mathbf{v}\in  [{L}^2(\mathcal{D})]^d:\ \nabla\times\mathbf{v}\in [L^2(\Omega)]^{2d-3}
		\},\\
		&&\mathbf{H}_0(\text{curl};\Omega):=\{\mathbf{v}\in \mathbf{H}(\text{curl};\Omega):\ \mathbf{n}\times\mathbf{v}=0 \ \ \text{on} \ \ \partial\Omega
		\},
	\end{eqnarray*}
	where the cross product $\times$ of two vectors is defined as following: for $\mathbf{v}=(v_1,\cdots,v_d)^T, \mathbf{w}=(w_1,\cdots,w_d)^T$,
	$$\mathbf{v}\times \mathbf{w}=\left\{
	\begin{array}{ll}
		v_1w_2-v_2w_1, & \text{if }d=2,\\
		(v_2w_3-v_3w_2, v_3w_1-v_1w_3,v_1w_2-v_2w_1)^T, & \text{if }d=3.
	\end{array} \right.
	$$
	
	Let $\mathcal{T}_h$ be a shape regular  partition of $\Omega$ into closed simplexes, and let
	$\varepsilon_h$ be the set of all edges(faces) of all the elements in $\Omega$.
	For any $K\in \mathcal{T}_h$, $e\in \varepsilon_h$, we denote by $h_K$ the diameter of $K$
	and by $h_e$ the diameter of $e$.
	Let $\mathbf{n}_K$  and $\mathbf{n}_e$ denote the outward unit normal vectors
	along the boundary $\partial K$ and $e$, respectively. Sometimes we may abbreviate $\mathbf{n}_K$ as $\mathbf{n}$.
	
	We use  $\nabla_h$, $\nabla_h\cdot$ and $\nabla_h\times$ to denote  respectively  the operators of piecewise-defined gradient, divergence and
	curl with respect to the decomposition $\mathcal{T}_h$.
	We also introduce the following mesh-dependent inner products and norms:
	\begin{eqnarray*}
		\langle u,v\rangle_{\partial\mathcal{T}_h}:=\sum_{K\in \mathcal{T}_h}\langle u,v\rangle_{\partial K},\ \ \ \ \|v\|_{0,\partial\mathcal{T}_h}:=\left(\sum_{K\in \mathcal{T}_h}\|v\|^2_{0,\partial K}\right)^{1/2}.
	\end{eqnarray*}

	Throughout this paper, we use $\alpha\lesssim\beta $ to denote    $\alpha\leq C\beta $, where $C$ is a positive constant independent of the  mesh size $h$. And $\alpha\sim \beta$ simplifies $\alpha\lesssim\beta\lesssim\alpha$.

	\subsection{Weak form}
	
	For simplicity, we set
	\begin{align*}
		\mathbf{V}:=[H_0^1(\Omega)]^d,\ \ \
		\mathbf{W}:=\mathbf{H}_0(\text{curl};\Omega). 
	\end{align*}
	
	For all $ \mathbf{u},\mathbf{v},\Phi \in \mathbf{V}, \ \mathbf{B},\mathbf{w} \in
	\mathbf{W},\ T,z\in H_0^1(\Omega),\ q\in L_0^2(\Omega),\theta\in H_0^1(\Omega)$,
	we define the following  bilinear and trilinear forms:
	\begin{eqnarray*}
		&&a_{1}(\mathbf{u},\mathbf{v}):=\frac{1}{H_a^2}(\nabla\mathbf{u},\nabla\mathbf{v}), \ \ \ \ \ \ \ \ \ \ \ \ \ \
		b_1(\mathbf{v},q):=( q,\nabla\cdot\mathbf{v}),\\
		&&a_{2}(\mathbf{B},\mathbf{w})
		:=\frac{1}{R_m^2}(\nabla\times\mathbf{B},\nabla\times\mathbf{w}), \ \ \ \
		b_2(\mathbf{w},\theta):=\frac{1}{R_m}(\nabla \theta,\mathbf{w}),\\
		&&a_{3}(T,z):=\frac{1}{P_rR_e}(\nabla T,\nabla z),\ \ \ \ \ \ \ \ \ \ \ \
		G_{3}(T,\mathbf{v}):=(\frac{G_r}{NR_e^2}\frac{\mathbf{g}}{g}T,\mathbf{v}),\\
		&&c_{1}(\Phi;\mathbf{u},\mathbf{v}):=
		\frac{1}{N}((\Phi\cdot\nabla)\mathbf{u},\mathbf{v})
		=\frac{1}{N}\big\{
		\frac{1}{2}(\nabla\cdot(\Phi\otimes\mathbf{u}),\mathbf{v})-
		\frac{1}{2}(\nabla\cdot(\Phi\otimes\mathbf{v}),\mathbf{u})
		\big\},\\
		&&c_{2}(\mathbf{v};\mathbf{B},\mathbf{w}):=
		\frac{1}{R_m}(\nabla\times\mathbf{w},\mathbf{v}\times\mathbf{B}),\\
		&&c_{3}(\mathbf{u};T,z):=((\mathbf{u}\cdot\nabla)T,z)
		=\frac{1}{2}(\nabla\cdot(\mathbf{u}T),z)-
		\frac{1}{2}(\nabla\cdot(\mathbf{u}z),T),
	\end{eqnarray*}
	It is easy to see that $c_1(\Phi;\mathbf{v},\mathbf{v})=0$ and $c_3(\mathbf{u};z,z)=0$.
	
	The weak form of the problem (\ref{mhd1}) reads: find
	$\mathbf{u}\in\mathbf{V}, \mathbf{B}\in\mathbf{W}, T\in H_0^1(\Omega), p\in L_0^2(\Omega), r\in H_0^1(\Omega)$, such that
	\begin{subequations}\label{weak1}
		\begin{align}
			&a_{1}(\mathbf{u},\mathbf{v})
			+a_{2}(\mathbf{B},\mathbf{w})
			+b_1(\mathbf{u},q)-b_1(\mathbf{v},p)
			+b_2(\mathbf{w},r)-b_2(\mathbf{B}\mathbf{,\theta})\nonumber\\
			&\ \ \ \ +c_{1}(\mathbf{u};\mathbf{u},\mathbf{v})+
			c_{2}(\mathbf{v};\mathbf{B},\mathbf{B})-
			c_{2}(\mathbf{u};\mathbf{B},\mathbf{w})\nonumber\\
			&=(\mathbf{f}_1,\mathbf{v})+\frac{1}{R_m}(\mathbf{f}_2,\mathbf{w})-G_{3}(T,\mathbf{v}),
			\quad\forall(\mathbf{v},\mathbf{w},q,\theta)\in \mathbf{V}\times\mathbf{W}\times L_0^2(\Omega)\times H^1_0(\Omega),
			\label{weak1:sub1}\\
			&a_{3}(T,z)+
			c_{3}(\mathbf{u};T,z)=(f_3,z),
			\quad\forall z\in H_0^1(\Omega).
			\label{weak1:sub2}
		\end{align}
	\end{subequations}
	
	\begin{rem}
		As shown  in  \cite[Theorem 5.5]{M1995},  the weak problem (\ref{weak1})  with  $d=3$ admits at least one solution  for  $\mathbf{f}_1 \in [H^{-1}(\Omega)]^3$, $\mathbf{f}_2 \in [L^2(\Omega)]^3$, and $f_3 \in H^{-1}(\Omega)$.  In addition,  under a certain  smallness condition the solution is unique.   
	\end{rem}

\section{Hybridizable discontinuous Galerkin	 finite element method}	
\subsection{HDG scheme}

For any integer $k\geq 1$,
we introduce the following finite dimensional spaces:
\begin{equation*}\begin{aligned}
&\mathbf{D}_h:=\{\bm{d}:\bm{d}|_K\in[\mathcal{P}_{k-1}(K)]^{d\times d},\forall K\in \mathcal{T}_h\},\\			
&\mathcal{V}_h:=\{\mathbf{v}_h:
\mathbf{v}_{h}|_K\in[\mathcal{P}_k(K)]^d,
\forall K\in \mathcal{T}_h \},\\
&\hat{\mathbf{V}}^0_h:=\{\hat{\mathbf{v}}_h:\hat{\mathbf{v}}_{h}|_e\in[\mathcal{P}_k(e)]^d, \forall e\in \varepsilon_h,\ \
 \mbox{and} \ \ \hat{\mathbf{v}}_{h}|_{\partial\Omega}=\mathbf{0}\},\\	
&\mathbf{C}_h:=\{\bm{c}:\bm{c}|_K\in[\mathcal{P}_{k-1}(K)]^{2d-3},\forall K\in \mathcal{T}_h\},\\	
&\hat{\mathbf{W}}^0_h:=\{\hat{\mathbf{w}}_h:\hat{\mathbf{w}}_{h}|_e\in[\mathcal{P}_k(e)]^d, \forall e\in \varepsilon_h,\ \
\mbox{and} \ \ \hat{\mathbf{w}}_{h}\times\mathbf{n} |_{\partial\Omega}=0\},\\ 
&\mathbf{S}_h:=\{\bm{s}:\bm{s}|_K\in[\mathcal{P}_{k-1}(K)]^{d},\forall K\in \mathcal{T}_h\},\\			
&Z_h:=\{z_h:
z_{h}|_K\in\mathcal{P}_k(K),
\forall K\in \mathcal{T}_h, \},\\
&\hat{Z}^0_h:=\{\hat{z}_h:\hat{z}_{h}|_e\in\mathcal{P}_k(e), \forall e\in \varepsilon_h,\ \
\mbox{and} \ \ \hat{z}_{h}|_{\partial\Omega}=0\},\\	
&Q_h:=(q_h\in L_0^2(\Omega):q _h|_K\in\mathcal{P}_{k-1}(K),\forall K\in \mathcal{T}_h),\\
&\hat{Q}_h^0:=\{\hat{q}_h:\hat{q}_{h}|_e\in\mathcal{P}_k(e), \forall e\in \varepsilon_h\},\\
&\hat{R}_h^0:=\{\hat{r}_h:\hat{r}_{h}|_e\in\mathcal{P}_k(e), \forall e\in \varepsilon_h, \ \
\mbox{and} \ \ \hat{r}_{h}|_{\partial\Omega}=0\}.
	\end{aligned}\end{equation*}

Introducing 
$$
\bm{L}=\frac{1}{H_a^2}\nabla\mathbf{u},\ \ 
\bm{N}=\frac{1}{R_m^2}\nabla\times\mathbf{B},\ \
\bm{A}=\frac{1}{P_rR_e}\nabla T
$$
in (\ref{mhd1}), we can rewrite it as
\begin{subequations}\label{mhd1-HDG}
	\begin{align}
		H_a^{2}\bm{L}-\nabla\mathbf{u}=0,\quad
		&\mbox{in}\ \Omega,
			\label{mhd1-HDG:sub6}\\
		\frac{1}{H_a^{2}}\nabla\cdot \bm{L}  +\frac{1}{N}(\mathbf{u}\cdot \nabla) \mathbf{u}+\nabla p - \frac{1}{R_{m}} \nabla \times\mathbf{B}\times \mathbf{B}=\mathbf{f}_1 - \frac{G_{r}}{NR^{2}_{e}}T{\mathbf{g}},\quad
		&\mbox{in}\ \Omega,
		\label{mhd1-HDG:sub1}\\
		\nabla \cdot \mathbf{u}=0,\quad&\mbox{in}\ \Omega,
		\label{mhd1-HDG:sub2}\\
		R_m^{2}\bm{N}-\nabla\times\mathbf{B}=0,\quad
		&\mbox{in}\ \Omega,
		\label{mhd1-HDG:sub7}\\
		\frac{1}{R_{m}^2}\nabla\times \bm{N}-\frac{1}{R_{m}}\nabla\times(\mathbf{u}\times \mathbf{B})+\frac{1}{R_{m}}\nabla r=\frac{1}{R_{m}}\mathbf{f}_2,\quad&\mbox{in}\ \Omega,
		\label{mhd1-HDG:sub3}\\
		\nabla \cdot \mathbf{B}=0,\quad&\mbox{in}\ \Omega,
		\label{mhd1-HDG:sub4}\\
		P_rR_e\bm{A}-\nabla T=0,\quad
		&\mbox{in}\ \Omega,
		\label{mhd1-HDG:sub8}\\
		\frac{1}{P_{r}R_{e}}\nabla\cdot \bm{A}+(\mathbf{u}\cdot\nabla)T=f_3,\quad
		&\mbox{in}\ \Omega,
		\label{mhd1-HDG:sub5}
	\end{align}
\end{subequations}
Then
we define the following bilinear forms and trilinear terms:
	\begin{equation*}\begin{aligned}
&a^L(\bm{L}_h,\bm{J}_h):=\frac{1}{H_a^2}(\bm{L}_h,\bm{J}_h),\\
&
a_{1h}(\mathcal{U}_h,\bm{J}_h):=(\mathbf{u}_h,\nabla_h\cdot \bm{J}_h)-\langle\hat{\mathbf{u}}_h,\bm{J}_h\cdot\mathbf{n} \rangle_{\partial\mathcal{T}_h},\\
&\quad s_{1h}(\mathcal{U}_h,\mathcal{V}_h):=\frac{1}{H_a^2}\langle \tau(\mathbf{u}_{h}-\hat{\mathbf{u}}_h),
\mathbf{v}_{h}-\hat{\mathbf{v}}_h\rangle_{\partial\mathcal{T}_h},\\
&a^N(\bm{N}_h,\bm{I}_h):=\frac{1}{R_m^2}(\bm{N}_h,\bm{I}_h),\\
&
a_{2h}(\mathcal{B}_h,\bm{I}_h)
			:=(\mathbf{B}_h,\nabla_{h}\times \bm{I}_h)
			+\langle\hat{\mathbf{B}}_h,\bm{I}_h\times\mathbf{n} \rangle_{\partial\mathcal{T}_h},\\
			&\quad s_{2h}(\mathcal{B}_h,\mathcal{W}_h):=\langle \tau (\mathbf{B}_{h}-\hat{\mathbf{B}}_h)\times \mathbf{n},
			(\mathbf{w}_{h}-\hat{\mathbf{w}}_{h})\times \mathbf{n}\rangle_{\partial\mathcal{T}_h},\\
&a^A(\bm{A}_h,\bm{E}_h):=\frac{1}{P_rR_e}(\bm{A}_h,\bm{E}_h),\\
&
a_{3h}(\mathcal{T}_h,\bm{E}_h):=(T_h,\nabla_h\cdot \bm{E}_h)-\langle\hat{T}_h,\bm{E}_h\cdot\mathbf{n} \rangle_{\partial\mathcal{T}_h},\\
&\quad s_{3h}(\mathcal{T}_h,\mathcal{Z}_h):=\frac{1}{P_rR_e}\langle \tau(T_{h}-\hat{T}_h),
z_{h}-\hat{z}_h\rangle_{\partial\mathcal{T}_h},\\			
&
			b_{1h}(\mathcal{V}_h,\mathcal{Q}_h):=-(\nabla_{h}\cdot\mathbf{v}_{h}, q_h)+\langle \mathbf{v}_{h}\cdot \mathbf{n}, \hat{q}_h
			\rangle_{\partial\mathcal{T}_h},\\		&b_{2h}(\mathcal{W}_h,\mathcal{\theta}_h):=-\frac{1}{R_m}(\nabla_{h}\cdot\mathbf{w}_{h}, \theta_h)+\frac{1}{R_m}\langle \mathbf{w}_{h}\cdot \mathbf{n}, \hat{\theta}_h
			\rangle_{\partial\mathcal{T}_h},\\	
			&
			G_{3h}(\mathcal{T}_{h},\mathcal{V}_{h}):=\frac{G_r}{NR_e^2}(\frac{\mathbf{g}}{g}T_{h},\mathbf{v}_{h}),\\
&c_{1h}(\Phi_h;\mathcal{U}_h,\mathcal{V}_h):=
-\frac{1}{2N}(\mathbf{u}_{h}\otimes\phi_{h},\nabla_h\mathbf{v}_{h})+			
\frac{1}{2N}\langle\hat{\mathbf{u}}_{h}\otimes\hat{\phi}_{h}\mathbf{n} ,\mathbf{v}_{h}\rangle_{\partial\mathcal{T}_h}
-\frac{1}{2N}(\mathbf{v}_{h}\otimes\phi_{h},\nabla_h\mathbf{u}_{h})+			
\frac{1}{2N}\langle\hat{\mathbf{v}}_{h}\otimes\hat{\phi}_{h}\mathbf{n} ,\mathbf{u}_{h}\rangle_{\partial\mathcal{T}_h},\\
&c_{2h}(\mathcal{V}_h;\mathcal{B}_h,\mathcal{W}_h):=
\frac{1}{R_m}(\mathbf{w}_{h},\nabla_{h}\times(\mathbf{v}_{h}\times\mathbf{B}_{h}))+\frac{1}{R_m}\langle \hat{\mathbf{w}}_h\times\mathbf{n},\mathbf{v}_{h}\times\mathbf{B}_{h}
\rangle_{\partial\mathcal{T}_h},\\
			&c_{3h}(\mathcal{U}_h;\mathcal{T}_h,\mathcal{Z}_h):=
			-\frac12(\mathbf{u}_{h} T_{h},\nabla_hz_{h})+		
\frac12\langle\hat{\mathbf{u}}_{h}\hat{T}_{h}\mathbf{n} ,z_{h}\rangle_{\partial\mathcal{T}_h}
			-\frac12(\mathbf{u}_{h} z_{h},\nabla_hT_{h})+		
			\frac12\langle\hat{\mathbf{u}}_{h}\hat{T}_{h}\mathbf{n} ,T_{h}\rangle_{\partial\mathcal{T}_h},
	\end{aligned}\end{equation*}
	for
	\begin{align*}
		&\bm{L}_h,\bm{J}_h\in\mathbf{D}_h,
		\bm{N}_h,\bm{I}_h\in\mathbf{C}_h,
	\bm{A}_h,\bm{E}_h\in\mathbf{S}_h,\\
		& \mathcal{U}_h:=\{\mathbf{u}_{h},\hat{\mathbf{u}}_h\}, \mathcal{V}_h:=\{\mathbf{v}_{h},\hat{\mathbf{v}}_h\},
		\Phi_h:=\{\phi_{h},\hat{\phi}_h\}\in
		[\mathbf{V}_h\times\hat{\mathbf{V}}_h^0],\\
		&
		\mathcal{B}_h:=\{\mathbf{B}_{h},\hat{\mathbf{B}}_h\},
		\mathcal{W}_h:=\{\mathbf{w}_{h},\hat{\mathbf{w}}_h\}\in[\mathbf{V}_h\times\hat{\mathbf{W}}_h^0],\\
		&\mathcal{T}_h:=\{T_{h},\hat{T}_h\},
		\mathcal{Z}_h:=\{z,\hat{z}_h\}\in [Z_h\times\hat{Z}_h^0],\\
		&\mathcal{Q}_h:=\{q_h,\hat{q}_h\}\in [Q_h\times\hat{Q}_h^0], \quad
		\mathcal{\theta}_h:=\{\theta_h,\hat{\theta}_h\}\in [Q_h\times\hat{R}_h^0],
	\end{align*}
	where the stabilization parameter $\tau$ in $s_{1h}(\cdot;\cdot,\cdot)$, $s_{2h}(\cdot;\cdot,\cdot)$ and  $s_{3h}(\cdot;\cdot,\cdot)$ is given by  $$\tau|_{\partial K}=h_K^{-1}, \quad \forall K\in \mathcal{T}_h.$$
	We easily see that
	$$
	c_{1h}(\Phi_h;\mathcal{V}_h,\mathcal{V}_h)=0,\quad \forall \ \Phi_h,\mathcal{V}_h. \quad\quad
	c_{3h}(\mathcal{U}_h;\mathcal{Z}_h,\mathcal{Z}_h)=0,
	\quad \forall \ \mathcal{U}_h,\mathcal{Z}_h. 
	$$	
	The HDG finite element scheme for the model (\ref{mhd1}) reads as follows: find
	$(\bm{L}_h,\mathcal{U}_h,\bm{N}_h,\mathcal{B}_h,\bm{A} _h,\mathcal{T}_h,\mathcal{P}_h,\mathcal{R}_h)\in\mathbf{D}_h\times[\mathbf{V}_h\times\hat{\mathbf{V}}_h^0]\times\mathbf{C}_h\times[\mathbf{V}_h\times\hat{\mathbf{W}}_h^0]\times\mathbf{S}_h\times[Z_h\times\hat{Z}_h^0]\times[Q_h\times\hat{Q}_h^0]\times[Q_h\times\hat{R}_h^0]$,
such that, for all 
$(\bm{J}_h,\mathcal{V}_h,I_h,\mathcal{W}_h,E_h,\mathcal{Z}_h,\mathcal{Q}_h,\mathcal{\theta}_h)\in\mathbf{D}_h\times[\mathbf{V}_h\times\hat{\mathbf{V}}_h^0]\times\mathbf{C}_h\times[\mathbf{V}_h\times\hat{\mathbf{W}}_h^0]\times\mathbf{S}_h\times[Z_h\times\hat{Z}_h^0]\times[Q_h\times\hat{Q}_h^0]\times[Q_h\times\hat{R}_h^0]$,
\begin{small}
	\begin{subequations}\label{Tscheme0101*}
		\begin{align}
			a^L(\bm{L}_h,\bm{J}_h)-	a_{1h}(\mathcal{U}_h,\bm{J}_h)&	=0,
			\label{Tscheme0101*-a}\\
			a_{1h}(\mathcal{V}_h,\bm{L}_h)
			+s_{1h}(\mathcal{U}_h,\mathcal{V}_h)
			+b_{1h}(\mathcal{V}_h,\mathcal{P}_h)
			+c_{1h}(\mathcal{U}_h;\mathcal{U}_h,\mathcal{V}_h)
			+c_{2h}(\mathcal{V}_h;\mathcal{B}_h,\mathcal{B}_h)
			&=(\mathbf{f}_1,\mathbf{v}_h)-	G_{3h}(T_{h},\mathbf{v}_{h}), \label{Tscheme0101*-b}\\
			b_{1h}(\mathcal{U}_h,\mathcal{Q}_h)&=0
			, \label{Tscheme0101*-c}\\
			a^N(\bm{N}_h,\bm{I}_h)-	a_{2h}(\mathcal{B}_h,\bm{I}_h)&	=0,
			\label{Tscheme0101*-d}\\
			a_{2h}(\mathcal{W}_h,\bm{N}_h)
		+
			s_{2h}(\mathcal{B}_h,\mathcal{W}_h)
			+b_{2h}(\mathcal{W}_h,\mathcal{R}_h)-c_{2h}(\mathcal{U}_h;\mathcal{B}_h,\mathcal{W}_h)
			&= \frac{1}{R_m}(\mathbf{f}_2,\mathbf{w}_{h}), \label{Tscheme0101*-e}\\
			b_{2h}(\mathcal{B}_h,\mathcal{\theta}_h)&=0, \quad \label{Tscheme0101*-f}\\
			a^A(\bm{A}_h,\bm{E}_h)-	a_{3h}(\mathcal{T}_h,\bm{E}_h)&	=0,
			\label{Tscheme0101*-g}\\
			a_{3h}(\mathcal{Z}_h,\bm{A}_h)
			+s_{3h}(\mathcal{T}_h,\mathcal{Z}_h)+c_{3h}(\mathcal{U}_h;\mathcal{T}_h,\mathcal{Z}_h)
			&=(f_3,\bar{z}_h).\label{Tscheme0101*-h}
		\end{align}
	\end{subequations}
\end{small}	
	
	As shown in \cite[Theorem 4.1]{CX2024}, the equations \eqref{Tscheme0101*-c} and \eqref{Tscheme0101*-f} lead to the globally divergence-free discrete  solutions of velocity  and   magnetic  field, respectively, i.e. there hold
	\begin{align}\label{T-divfree-velocity}
		&\mathbf{u}_{h}\in \mathbf{H}(\text{div},\Omega),\quad \nabla\cdot\mathbf{u}_{h}=0,
		\\
		\label{T-divfree-magnet}
		& \mathbf{B}_{h}\in \mathbf{H}(\text{div},\Omega), \quad \nabla\cdot\mathbf{B}_{h}=0.
	\end{align}


Then we introduce three operators:

 $\mathcal{G}_h:\mathbf{V}_{h}\times\hat{\mathbf{V}}_{h}^0\rightarrow \mathbf{D}_h$ defined by 
\begin{align}\label{def-khu}
(\mathcal{G}_h\mathbf{v}_{h},\bm{J}_h)=-a_{1h}(\mathbf{v}_{h},\bm{J}_h),\ \forall  \mathbf{v}_{h}\in \mathbf{V}_{h}\times\hat{\mathbf{V}}_{h}^0,\ \forall \bm{J}_h\in\mathbf{D}_h.
\end{align}

$\mathcal{K}_h:\mathbf{V}_{h}\times\hat{\mathbf{W}}_{h}^0\rightarrow \mathbf{C}_h$ defined by 
\begin{align}\label{def-khB}
	(\mathcal{K}_h\mathbf{w}_{h},\bm{I}_h)=-a_{2h}(\mathbf{w}_{h},\bm{I}_h),\ \forall  \mathbf{w}_{h}\in \mathbf{V}_{h}\times\hat{\mathbf{W}}_{h}^0,\ \forall \bm{I}_h\in\mathbf{C}_h.
\end{align}

$\mathcal{Y}_h:Z_{h}\times\hat{Z}_{h}^0\rightarrow \mathbf{S}_h$ defined by 
\begin{align}\label{def-khT}
	(\mathcal{Y}_hz_{h},\bm{E}_h)=-a_{3h}(z_{h},\bm{E}_h),\ \forall  z_{h}\in Z_{h}\times\hat{Z}_{h}^0,\ \forall \bm{E}_h\in\mathbf{S}_h.
\end{align}
It is easy to see that $\mathcal{G}_h$, $\mathcal{K}_h$ and $\mathcal{Y}_h$ are well defined. From \eqref{Tscheme0101*-a}, \eqref{Tscheme0101*-d} and \eqref{Tscheme0101*-g} we immediately have
\begin{align*}
\bm{L}_h=H_a^2\mathcal{G}_h\mathcal{U}_h,\ \
\bm{N}_h=R_m^2\mathcal{K}_h\mathcal{B}_h,\ \
\bm{A}_h=P_rR_e\mathcal{Y}_h\mathcal{T}_h.
\end{align*}
Hence, we can rewrite \eqref{Tscheme0101*} as following system:

Find
$(\bm{L}_h,\mathcal{U}_h,\bm{N}_h,\mathcal{B}_h,A_h,\mathcal{T}_h,\mathcal{P}_h,\mathcal{R}_h)\in\mathbf{D}_h\times[\mathbf{V}_h\times\hat{\mathbf{V}}_h^0]\times\mathbf{C}_h\times[\mathbf{V}_h\times\hat{\mathbf{W}}_h^0]\times\mathbf{S}_h\times[Z_h\times\hat{Z}_h^0]\times[Q_h\times\hat{Q}_h^0]\times[Q_h\times\hat{R}_h^0]$,
such that 
\begin{small}
\begin{subequations}\label{Tscheme0101}
	\begin{align}
		\bm{L} _h-\frac{1}{H_a^2}\mathcal{G}_h\mathcal{U}_h&	=0,
		\label{Tscheme0101-a}\\
		\frac{1}{H_a^2}(\mathcal{G}_h\mathcal{U}_h,\mathcal{G}_h\mathcal{V}_h)
		+s_{1h}(\mathcal{U}_h,\mathcal{V}_h)
		+b_{1h}(\mathcal{V}_h,\mathcal{P}_h)
		+c_{1h}(\mathcal{U}_h;\mathcal{U}_h,\mathcal{V}_h)
		+c_{2h}(\mathcal{V}_h;\mathcal{B}_h,\mathcal{B}_h)
		&=(\mathbf{f}_1,\mathbf{v}_h)
		-G_{3h}(T_{h},\mathbf{v}_{h}), \label{Tscheme0101-b}\\
		b_{1h}(\mathcal{U}_h,\mathcal{Q}_h)&=0
		, \label{Tscheme0101-c}\\
		\bm{N}_h-\frac{1}{R_m^2}\mathcal{K}_h\mathcal{B}_h&	=0,
		\label{Tscheme0101-d}\\
		\frac{1}{R_m^2}(\mathcal{K}_h\mathcal{B}_h,\mathcal{K}_h\mathcal{W}_h)
		+s_{2h}(\mathcal{B}_h,\mathcal{W}_h)
		+b_{2h}(\mathcal{W}_h,\mathcal{R}_h)-c_{2h}(\mathcal{U}_h;\mathcal{B}_h,\mathcal{W}_h)
		&= \frac{1}{R_m}(\mathbf{f}_2,\mathbf{w}_{h}), \label{Tscheme0101-e}\\
		b_{2h}(\mathcal{B}_h,\mathcal{\theta}_h)&=0, \quad \label{Tscheme0101-f}\\
		\bm{A}_h-	\frac{1}{P_rR_e}\mathcal{Y}_h\mathcal{T}_h&	=0,
		\label{Tscheme0101-g}\\
		\frac{1}{P_rR_e}(\mathcal{Y}_h\mathcal{T}_h,\mathcal{Y}_h\mathcal{Z}_h)
		+s_{3h}(\mathcal{T}_h,\mathcal{Z}_h)
		+c_{3h}(\mathcal{U}_h;\mathcal{T}_h,\mathcal{Z}_h)
		&=(f_3,z_h),\label{Tscheme0101-h}
	\end{align}
\end{subequations}
\end{small}
holds for all 
$(\mathcal{V}_h,\mathcal{W}_h,\mathcal{Z}_h,\mathcal{Q}_h,\mathcal{\theta}_h)\in[\mathbf{V}_h\times\hat{\mathbf{V}}_h^0]\times[\mathbf{V}_h\times\hat{\mathbf{W}}_h^0]\times[Z_h\times\hat{Z}_h^0]\times[Q_h\times\hat{Q}_h^0]\times[Q_h\times\hat{R}_h^0]$.

To discuss  the existence and uniqueness of the discrete solution of the scheme (\ref{Tscheme0101*}) and derive  error estimates, we will give some preliminary results in next subsection.

\subsection{ Preliminary results}

Introduce the following semi-norms:
\begin{equation*}
	\begin{aligned}
		&|||\mathcal{V}_h|||_V :=\left(\|\mathcal{G}_h(\mathbf{v}_h,\hat{\mathbf{v}}_h)\|_0^2+\|\tau^{\frac{1}{2}}
		( \mathbf{v}_h-\hat{\mathbf{v}}_h)\|_{0,\partial\mathcal{T}_h}^2\right)^{1/2},\quad \forall \mathcal{V}_h \in \mathbf{V}_h\times \hat{\mathbf{V}}_h^0,
		\\
		&|||\mathcal{W}_h|||_W:=\left(\|\mathcal{K}_h(\mathbf{w}_h,\hat{\mathbf{w}}_h)\|_0^2+\|\tau^{\frac{1}{2}}
		( \mathbf{w}_{h}-\hat{\mathbf{w}}_h)\times \mathbf{n} \|_{0,\partial\mathcal{T}_h}^2\right)^{1/2}, \quad \forall
		\mathcal{W}_h\in \mathbf{V}_h\times \hat{\mathbf{W}}_h^0,\\
		&|||\mathcal{Z}_h|||_Z :=\left(\|\|\mathcal{Y}_h(z_h,\hat{z}_h)\|_0^2+\|\tau^{\frac{1}{2}}
		(z_h-\hat{z}_h)\|_{0,\partial\mathcal{T}_h}^2\right)^{1/2},\quad \forall \mathcal{Z}_h \in Z_h\times \hat{Z}_h^0,
		\\
		&|||\mathcal{Q}_h|||_Q:=\left(\|q_h\|_0^2+h_K^2\|q_h-\hat{q}_h\|_{0,\partial\mathcal{T}_h}^2\right)^{1/2},\quad \forall
		Q_h\in \mathcal{Q}_h\times \hat{Q}_h^0,\\
		&|||\mathcal{\theta}_h|||_R:=\left(\|\theta_h-\bar{\theta}_{h}\|_0^2+
		h_K^2\|\theta_h-\hat{\theta}_h\|_{0,\partial\mathcal{T}_h}^2\right)^{1/2}, \quad \forall  \mathcal{\theta}_h\in Q_h\times \hat{R}_h^0,
	\end{aligned}
\end{equation*}
where $\bar\theta_{h}:=\frac{1}{|\Omega|}\int_\Omega\theta_{h}  d\mathbf{x}$ denotes the mean value of $\bar{\theta}_{h}$ and we recall that $\tau|_{\partial K}=h_K^{-1}$.
It is  easy to see that $|||\cdot|||_V,  |||\cdot|||_Z, |||\cdot|||_Q $ and $|||\cdot|||_R$
are  norms  on $ [\mathbf{V}_h\times\hat{\mathbf{V}}_h^0],  [Z_h\times\hat{Z}_h^0],  [Q_h\times\hat{Q}_h^0]$
and $[Q_h\times\hat{R}_h^0]$, respectively  (cf.  \cite{CFX2016,CX2024}).
$|||\cdot|||_W$ is a norm on $[\mathbf{V}_h\times\hat{\mathbf{W}}_h^0]$  (cf.  \cite{ZZX2023}).

In view of  the Green's formula, the Cauchy-Schwarz inequality, the trace inequality and the inverse inequality, we can easily derive the following inequalities on $[\mathbf{V}_h\times\hat{\mathbf{V}}_h^0],  [\mathbf{V}_h\times\hat{\mathbf{W}}_h^0]$  and $ [Z_h\times\hat{Z}_h^0]$ .
\begin{myLem}\label{lemma3}
	\cite{CX2024}
For any $(\mathcal{V}_h,\mathcal{W}_h,\mathcal{Z}_h)\in[\mathbf{V}_h\times\hat{\mathbf{V}}_h^0]\times[\mathbf{V}_h\times\hat{\mathbf{W}}_h^0]\times[Z_h\times\hat{Z}_h^0]$, there hold	
\begin{subequations}\label{lemma41}
		\begin{align}
&|||\mathcal{V}_h|||_V\sim \|\nabla \mathbf{v}_{h}\|_{0,K}+h_{K}^{-\frac{1}{2}}\|\mathbf{v}_{h}-\hat{\mathbf{v}}_h\|_{0,\partial K}.
			\label{lemma41:sub2}\\
			&|||\mathcal{W}_h|||_W\sim \|\nabla \times \mathbf{w}_{h}\|_{0,K}+h_{K}^{-\frac{1}{2}}\|(\mathbf{w}_{h}-\hat{\mathbf{w}}_h)\times\mathbf{n}\|_{0,\partial K}.
			\label{lemma41:sub4}\\
			&|||\mathcal{Z}_h|||_Z\sim \|\nabla z_{h}\|_{0,K}+h_{K}^{-\frac{1}{2}}\|z_{h}-\hat{z}_h\|_{0,\partial K}.
			\label{lemma41:sub6}
		\end{align}
	\end{subequations}
\end{myLem}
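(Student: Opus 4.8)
The plan is to prove the first equivalence, for $|||\cdot|||_V$, in full detail and then note that the other two follow by identical reasoning. The starting point is to remove the operator $\mathcal{G}_h$ from the abstract definition and expose it as a local reconstruction of the gradient. Using the definition \eqref{def-khu} together with the elementwise integration-by-parts identity $(\mathbf{v}_h,\nabla\cdot\bm{J}_h)_K=-(\nabla\mathbf{v}_h,\bm{J}_h)_K+\langle\mathbf{v}_h,\bm{J}_h\cdot\mathbf{n}\rangle_{\partial K}$, one obtains, for every $\bm{J}_h\in\mathbf{D}_h$,
\[
(\mathcal{G}_h\mathcal{V}_h,\bm{J}_h)=\sum_{K\in\mathcal{T}_h}\Big[(\nabla\mathbf{v}_h,\bm{J}_h)_K-\langle\mathbf{v}_h-\hat{\mathbf{v}}_h,\bm{J}_h\cdot\mathbf{n}\rangle_{\partial K}\Big].
\]
Since the jump term $h_K^{-1/2}\|\mathbf{v}_h-\hat{\mathbf{v}}_h\|_{0,\partial K}$ already appears as the stabilization part of $|||\mathcal{V}_h|||_V$ (recall $\tau|_{\partial K}=h_K^{-1}$), the whole task reduces to comparing $\|\mathcal{G}_h\mathcal{V}_h\|_0$ with $\|\nabla_h\mathbf{v}_h\|_0$.

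For the upper bound I would test the displayed identity with $\bm{J}_h=\mathcal{G}_h\mathcal{V}_h\in\mathbf{D}_h$, apply the Cauchy--Schwarz inequality on each $(\cdot,\cdot)_K$ and $\langle\cdot,\cdot\rangle_{\partial K}$, and control the boundary factor by the discrete (inverse) trace inequality $\|(\mathcal{G}_h\mathcal{V}_h)\cdot\mathbf{n}\|_{0,\partial K}\lesssim h_K^{-1/2}\|\mathcal{G}_h\mathcal{V}_h\|_{0,K}$. After one Cauchy--Schwarz over the elements and dividing by $\|\mathcal{G}_h\mathcal{V}_h\|_0$, this yields $\|\mathcal{G}_h\mathcal{V}_h\|_0\lesssim\big(\sum_K\|\nabla\mathbf{v}_h\|_{0,K}^2+h_K^{-1}\|\mathbf{v}_h-\hat{\mathbf{v}}_h\|_{0,\partial K}^2\big)^{1/2}$, which combined with the jump term gives $|||\mathcal{V}_h|||_V\lesssim\|\nabla\mathbf{v}_h\|_{0,K}+h_K^{-1/2}\|\mathbf{v}_h-\hat{\mathbf{v}}_h\|_{0,\partial K}$.

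For the lower bound the essential observation is that $\mathbf{v}_h|_K\in[\mathcal{P}_k(K)]^d$ implies $\nabla_h\mathbf{v}_h\in[\mathcal{P}_{k-1}]^{d\times d}=\mathbf{D}_h$, so I may legitimately choose $\bm{J}_h=\nabla_h\mathbf{v}_h$ in the identity. This gives $\|\nabla_h\mathbf{v}_h\|_0^2=(\mathcal{G}_h\mathcal{V}_h,\nabla_h\mathbf{v}_h)+\sum_K\langle\mathbf{v}_h-\hat{\mathbf{v}}_h,(\nabla\mathbf{v}_h)\cdot\mathbf{n}\rangle_{\partial K}$; bounding the first term by Cauchy--Schwarz and the second by the inverse trace inequality $\|(\nabla\mathbf{v}_h)\cdot\mathbf{n}\|_{0,\partial K}\lesssim h_K^{-1/2}\|\nabla\mathbf{v}_h\|_{0,K}$ and dividing through by $\|\nabla_h\mathbf{v}_h\|_0$ produces $\|\nabla_h\mathbf{v}_h\|_0\lesssim\|\mathcal{G}_h\mathcal{V}_h\|_0+\big(\sum_K h_K^{-1}\|\mathbf{v}_h-\hat{\mathbf{v}}_h\|_{0,\partial K}^2\big)^{1/2}\lesssim|||\mathcal{V}_h|||_V$. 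Together the two bounds give \eqref{lemma41:sub2}.

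The equivalences \eqref{lemma41:sub4} and \eqref{lemma41:sub6} follow by repeating this argument verbatim, replacing the divergence/gradient pair by the curl (for $\mathcal{K}_h$ via \eqref{def-khB}) and again by the divergence/gradient pair (for $\mathcal{Y}_h$ via \eqref{def-khT}); the admissibility of the local test functions is guaranteed because $\nabla_h\times\mathbf{w}_h\in[\mathcal{P}_{k-1}]^{2d-3}=\mathbf{C}_h$ and $\nabla_h z_h\in[\mathcal{P}_{k-1}]^d=\mathbf{S}_h$. The only point needing mild care is the curl case, where the integration-by-parts identity reads $(\mathbf{w}_h,\nabla\times\bm{I}_h)_K=(\nabla\times\mathbf{w}_h,\bm{I}_h)_K+\langle(\mathbf{w}_h\times\mathbf{n}),\bm{I}_h\rangle_{\partial K}$, so the relevant jump is the tangential trace $(\mathbf{w}_h-\hat{\mathbf{w}}_h)\times\mathbf{n}$ rather than the full jump; tracking this correctly is the main, though routine, obstacle, after which the Cauchy--Schwarz and inverse-trace estimates go through unchanged.
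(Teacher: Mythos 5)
Your proof is correct and takes essentially the route the paper indicates (and that the cited reference \cite{CX2024} carries out): expressing $\mathcal{G}_h$, $\mathcal{K}_h$ and $\mathcal{Y}_h$ via elementwise Green's formulas as the broken derivative plus a jump correction, then combining Cauchy--Schwarz with the discrete trace/inverse inequality of Lemma \ref{lemma2}, with the admissible choices $\bm{J}_h=\nabla_h\mathbf{v}_h\in\mathbf{D}_h$, $\bm{I}_h=\nabla_h\times\mathbf{w}_h\in\mathbf{C}_h$ and $\bm{E}_h=\nabla_h z_h\in\mathbf{S}_h$ for the reverse bounds. Your tangential-trace bookkeeping in the curl case is consistent with the definition of $a_{2h}$, so there are no gaps.
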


%

\begin{myLem}\label{lemma7}\cite{CX2024}
(HDG Sobolev emdedding)
	There hold
	\begin{subequations}\label{lemma42}
		\begin{align}
			&\|\nabla_{h}\mathbf{v}_h\|_{0}\lesssim |||\mathcal{V}_{h}|||_V,\ \ \forall \mathbf{v}_{h}\in
			[\mathbf{V}_h\times\hat{\mathbf{V}}_h^0],
			\label{lemma42:sub1}\\
			&\|\nabla_{h}\times \mathbf{w}_{h}\|_{0}\lesssim |||\mathcal{W}_{h}|||_W,\ \ \forall \mathbf{w}_{h}\in[\mathbf{V}_h\times\hat{\mathbf{W}}_h^0],
			\label{lemma42:sub2}\\
			&\|\nabla_{h}z_h\|_{0}\lesssim |||\mathcal{Z}_{h}|||_Z,\ \ \forall z_{h}\in [Z_h\times\hat{Z}_h^0].
			\label{lemma42:sub3}
		\end{align}
	\end{subequations}
	In addition,
	\begin{subequations}\label{T-lemma42}
		\begin{align}
			&\|\mathbf{v}_{h}\|_{0,q}\lesssim|||\mathcal{V}_{h}|||_V,\quad \forall \mathbf{v}_{h}\in
			[\mathbf{V}_h\times\hat{\mathbf{V}}_h^0],
			\label{T-lemma42:sub3}\\
			&\|z_{h}\|_{0,q}\lesssim|||\mathcal{Z}_{h}|||_Z, \quad \forall z_{h}\in [Z_h\times\hat{Z}_h^0],
			\label{T-lemma42:sub4}
		\end{align}
	\end{subequations}
	for $ 2\leq q<\infty $ when $d=2$, and for $ 2\leq q\leq6 $ when $d=3$.
\end{myLem}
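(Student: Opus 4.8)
The plan is to prove the two groups of estimates by quite different means: the gradient/curl bounds \eqref{lemma42} are an immediate consequence of the norm equivalences in Lemma~\ref{lemma3}, whereas the $L^q$ embeddings \eqref{T-lemma42} require a genuine discrete Sobolev inequality. For \eqref{lemma42} I would simply discard the nonnegative boundary contribution: squaring the equivalence \eqref{lemma41:sub2} and summing over $K\in\mathcal{T}_h$ gives
\[\|\nabla_h\mathbf{v}_h\|_0^2=\sum_{K}\|\nabla\mathbf{v}_h\|_{0,K}^2\le\sum_{K}\Big(\|\nabla\mathbf{v}_h\|_{0,K}^2+h_K^{-1}\|\mathbf{v}_h-\hat{\mathbf{v}}_h\|_{0,\partial K}^2\Big)\lesssim|||\mathcal{V}_h|||_V^2,\]
and \eqref{lemma42:sub2}, \eqref{lemma42:sub3} follow verbatim from \eqref{lemma41:sub4} and \eqref{lemma41:sub6}.

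For the embeddings \eqref{T-lemma42} I would first convert the single-valued HDG traces into classical DG jumps. On an interior face $e=\partial K^+\cap\partial K^-$ write $[\![\mathbf{v}_h]\!]=(\mathbf{v}_h^+-\hat{\mathbf{v}}_h)-(\mathbf{v}_h^--\hat{\mathbf{v}}_h)$, while on a boundary face $\hat{\mathbf{v}}_h=\mathbf{0}$ so that $\mathbf{v}_h-\hat{\mathbf{v}}_h=\mathbf{v}_h$ encodes the Dirichlet jump; the triangle inequality then yields $\sum_{e\in\varepsilon_h}h_e^{-1}\|[\![\mathbf{v}_h]\!]\|_{0,e}^2\lesssim|||\mathcal{V}_h|||_V^2$. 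Next I would introduce an Oswald-type averaging interpolant $\mathbf{v}_h^c\in[H_0^1(\Omega)]^d$ of continuous piecewise polynomials, for which the standard estimate $\sum_K\big(h_K^{-2}\|\mathbf{v}_h-\mathbf{v}_h^c\|_{0,K}^2+\|\nabla(\mathbf{v}_h-\mathbf{v}_h^c)\|_{0,K}^2\big)\lesssim\sum_{e}h_e^{-1}\|[\![\mathbf{v}_h]\!]\|_{0,e}^2\lesssim|||\mathcal{V}_h|||_V^2$ holds. The conforming part is then controlled by the classical Sobolev embedding $H_0^1(\Omega)\hookrightarrow L^q(\Omega)$ together with Poincar\'e's inequality and the first part already proved, $\|\mathbf{v}_h^c\|_{0,q}\lesssim\|\nabla\mathbf{v}_h^c\|_0\lesssim\|\nabla_h\mathbf{v}_h\|_0+\|\nabla(\mathbf{v}_h-\mathbf{v}_h^c)\|_0\lesssim|||\mathcal{V}_h|||_V$.

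The crux is bounding the nonconforming remainder $\mathbf{v}_h-\mathbf{v}_h^c$ in $L^q$ rather than $L^2$, and this is the only place where the dimension-dependent restriction on $q$ genuinely enters. Here I would apply the elementwise inverse inequality $\|\mathbf{v}_h-\mathbf{v}_h^c\|_{0,q,K}\lesssim h_K^{-d(\frac12-\frac1q)}\|\mathbf{v}_h-\mathbf{v}_h^c\|_{0,K}$, combine it with the $L^2$ averaging bound above, and observe that the resulting power of $h_K$ is nonnegative precisely when $q\le 2d/(d-2)$, that is $q<\infty$ for $d=2$ and $q\le 6$ for $d=3$, which is exactly the stated range. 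At the critical exponent this power is zero, so the summation over elements proceeds through $\|\cdot\|_{\ell^q}\le\|\cdot\|_{\ell^2}$ (valid since $q\ge2$), giving $\|\mathbf{v}_h-\mathbf{v}_h^c\|_{0,q}\lesssim|||\mathcal{V}_h|||_V$. A triangle inequality in $L^q$ then combines the conforming and nonconforming parts to produce \eqref{T-lemma42:sub3}, and the scalar estimate \eqref{T-lemma42:sub4} is obtained identically with $\mathcal{Z}_h$ and $z_h^c$ in place of $\mathcal{V}_h$ and $\mathbf{v}_h^c$. I expect the main obstacle to be this last balancing of the inverse inequality against the averaging estimate, since it is exactly the interplay of their $h_K$-exponents that pins down the admissible range of $q$.
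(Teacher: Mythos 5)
Your proposal is correct, but it is considerably more self-contained than what the paper actually does: the paper's entire proof consists of observing that \eqref{lemma42} follows directly from the norm equivalences of Lemma~\ref{lemma3} (exactly your first step, dropping the nonnegative trace contribution) and then \emph{citing} the $L^q$ embeddings \eqref{T-lemma42} from Lemma~3.5 of the weak Galerkin paper \cite{HX2019}, with no argument given. You instead reconstruct the embedding from scratch: converting the single-valued HDG traces into DG jumps (including the boundary faces, where $\hat{\mathbf{v}}_h=\mathbf{0}$ correctly encodes the Dirichlet condition), invoking the standard Oswald/Karakashian--Pascal averaging estimate to produce a conforming $\mathbf{v}_h^c\in[H_0^1(\Omega)]^d$ controlled by the jump seminorm, treating the conforming part by the continuous Sobolev embedding, and handling the nonconforming remainder by the elementwise $L^2$-to-$L^q$ inverse inequality, where the exponent balance $1+d/q-d/2\geq 0$ is precisely what pins down $q\leq 2d/(d-2)$, i.e.\ $q<\infty$ for $d=2$ and $q\leq 6$ for $d=3$; the passage $\ell^2\hookrightarrow\ell^q$ for $q\geq 2$ in the summation over elements is also handled correctly, including the critical case $q=6$, $d=3$. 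This is the standard mechanism by which such discrete Sobolev inequalities are proved for nonconforming and hybrid spaces (and is presumably what underlies the cited lemma in \cite{HX2019}), so your route buys transparency — it makes visible exactly where the dimension-dependent restriction on $q$ originates — at the cost of importing the averaging-operator estimate, which you state without proof but which is a well-established result on shape-regular simplicial meshes. No gap; the two approaches differ only in that the paper outsources the substantive part to a reference while you supply it.
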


\begin{proof}
	The inequality \eqref{lemma42} follows from  Lemma \ref{lemma3} directly and (\ref{T-lemma42}) comes from \cite[Lemma 3.5]{HX2019}.
\end{proof}

\begin{myLem}\label{lemma12*}
	\cite{ZZX2023} There holds
	\begin{eqnarray}\label{T-woL03}
		\|\mathbf{w}_{h}\|_{0,3,\Omega}\lesssim  |||\mathbf{w}_{h}|||_W, \quad \forall \mathcal{W}_h=\{\mathbf{w}_{h},\hat{\mathbf{w}}_h\}\in \mathbf{\bar W}_h.
	\end{eqnarray}
\end{myLem}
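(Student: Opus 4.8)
The plan is to compare $\mathbf{w}_h$ with a globally continuous (i.e.\ $[H^1(\Omega)]^d$-conforming) approximation and then invoke the Sobolev embedding $[H^1(\Omega)]^d\hookrightarrow[L^3(\Omega)]^d$. The point to keep in mind is that $|||\mathcal{W}_h|||_W$ only controls the broken curl $\|\nabla_h\times\mathbf{w}_h\|_0$ and the tangential jumps, so a bound on a full broken gradient $\|\nabla_h\mathbf{w}_h\|_0$ — and hence on $\|\mathbf{w}_h\|_{0,3}$ — can only be obtained by exploiting that every $\mathcal{W}_h\in\mathbf{\bar W}_h$ is globally divergence-free. Indeed, by the argument leading to \eqref{T-divfree-magnet} we have $\mathbf{w}_h\in\mathbf{H}(\mathrm{div};\Omega)$ with $\nabla\cdot\mathbf{w}_h=0$, so the normal component of $\mathbf{w}_h$ is continuous across interior faces and $\nabla\cdot\mathbf{w}_h$ vanishes element by element. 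For $d=3$, where $\mathbf{H}(\mathrm{curl})$ does not embed into $L^3$, this divergence structure is essential; the case $d=2$ is entirely analogous and no harder.

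The core of the argument is a discrete $\mathbf{H}(\mathrm{curl})$--$\mathbf{H}(\mathrm{div})$ Korn-type inequality for piecewise polynomials,
$$\|\nabla_h\mathbf{w}_h\|_0^2\lesssim\|\nabla_h\times\mathbf{w}_h\|_0^2+\|\nabla_h\cdot\mathbf{w}_h\|_0^2+\sum_{e\in\varepsilon_h}h_e^{-1}\big(\|[\![\mathbf{w}_h\times\mathbf{n}]\!]\|_{0,e}^2+\|[\![\mathbf{w}_h\cdot\mathbf{n}]\!]\|_{0,e}^2\big),$$
which follows from the elementwise identity relating $|\nabla\mathbf{w}_h|^2$ to $|\nabla\times\mathbf{w}_h|^2+|\nabla\cdot\mathbf{w}_h|^2$ up to boundary contributions, together with the trace and inverse inequalities. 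Using $\nabla\cdot\mathbf{w}_h=0$ (so the second volume term drops and the normal jumps $[\![\mathbf{w}_h\cdot\mathbf{n}]\!]$ vanish), and bounding the tangential jump on each interior face by $\|(\mathbf{w}_h-\hat{\mathbf{w}}_h)\times\mathbf{n}\|$ from the two adjacent elements, the right-hand side is controlled by $|||\mathcal{W}_h|||_W^2$ through \eqref{lemma41:sub4} of Lemma~\ref{lemma3}; hence $\|\nabla_h\mathbf{w}_h\|_0\lesssim|||\mathcal{W}_h|||_W$.

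Next I would introduce an Oswald/Scott--Zhang-type averaging operator $\Pi_h$ into the $[H^1(\Omega)]^d$-conforming Lagrange space (respecting the tangential boundary condition), for which the standard estimate $\sum_{K}h_K^{-2}\|\mathbf{w}_h-\Pi_h\mathbf{w}_h\|_{0,K}^2+\|\nabla(\Pi_h\mathbf{w}_h)\|_0^2\lesssim\|\nabla_h\mathbf{w}_h\|_0^2+\sum_{e}h_e^{-1}\|[\![\mathbf{w}_h]\!]\|_{0,e}^2$ holds. Since the normal continuity of $\mathbf{w}_h$ makes the full jump $[\![\mathbf{w}_h]\!]$ reduce to its tangential part, the whole right-hand side is again $\lesssim|||\mathcal{W}_h|||_W^2$ by the previous paragraph. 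Splitting $\|\mathbf{w}_h\|_{0,3,\Omega}\le\|\Pi_h\mathbf{w}_h\|_{0,3,\Omega}+\|\mathbf{w}_h-\Pi_h\mathbf{w}_h\|_{0,3,\Omega}$, I bound the first term by a Poincaré-type inequality and $[H^1(\Omega)]^d\hookrightarrow[L^3(\Omega)]^d$, obtaining $\|\Pi_h\mathbf{w}_h\|_{0,3,\Omega}\lesssim\|\Pi_h\mathbf{w}_h\|_1\lesssim|||\mathcal{W}_h|||_W$; for the second term the local inverse inequality $\|v\|_{0,3,K}\lesssim h_K^{d/3-d/2}\|v\|_{0,K}$ combined with the approximation estimate, $\ell^3\hookrightarrow\ell^2$ over the elements, and $\tau|_{\partial K}=h_K^{-1}$ yields $\|\mathbf{w}_h-\Pi_h\mathbf{w}_h\|_{0,3,\Omega}\lesssim|||\mathcal{W}_h|||_W$ as well. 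Adding the two contributions gives the claim.

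The main obstacle is the discrete Korn-type inequality for the broken curl: one must carefully track the elementwise boundary terms arising from $|\nabla\mathbf{w}_h|^2=|\nabla\times\mathbf{w}_h|^2+|\nabla\cdot\mathbf{w}_h|^2+\text{(surface terms)}$ and show that, once the normal jumps are eliminated by $\mathbf{w}_h\in\mathbf{H}(\mathrm{div};\Omega)$, all surviving contributions are of the tangential-jump type controlled by $|||\mathcal{W}_h|||_W$. The remaining ingredients — the conforming averaging estimate and the local $L^2\to L^3$ inverse inequality — are standard.
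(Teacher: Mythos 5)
There is a genuine gap, and it sits exactly where you flagged the ``main obstacle'': the discrete Korn-type inequality
$\|\nabla_h\mathbf{w}_h\|_0^2\lesssim\|\nabla_h\times\mathbf{w}_h\|_0^2+\|\nabla_h\cdot\mathbf{w}_h\|_0^2+\sum_e h_e^{-1}\bigl(\|[\![\mathbf{w}_h\times\mathbf{n}]\!]\|_{0,e}^2+\|[\![\mathbf{w}_h\cdot\mathbf{n}]\!]\|_{0,e}^2\bigr)$
is \emph{false}, with a constant independent of $h$, on general (non-convex) polygonal/polyhedral domains, which is the setting of the paper. To see why it cannot hold, apply it to the standard interpolant $I_h v$ of a smooth field $v$ with $v\times\mathbf{n}|_{\partial\Omega}=0$: the scaled jump terms are $O(h^{2k})$, so letting $h\to 0$ would yield the continuous inequality $\|\nabla v\|_0\lesssim\|\nabla\times v\|_0+\|\nabla\cdot v\|_0$ for all such smooth fields, and then, by density of smooth fields in the graph norm, the embedding $\mathbf{H}_0(\operatorname{curl};\Omega)\cap\mathbf{H}(\operatorname{div};\Omega)\hookrightarrow[H^1(\Omega)]^d$. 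That embedding is known to fail on domains with reentrant corners/edges (the gradient of the singular harmonic function is curl- and divergence-free with the right tangential boundary behavior but is not in $H^1$); it holds only on convex or smooth domains. The divergence-free structure of $\mathbf{\bar W}_h$ that you invoke removes the normal jumps and the $\|\nabla_h\cdot\mathbf{w}_h\|$ term, but it does not rescue the inequality, since the singular counterexample fields are themselves divergence-free. A further symptom that this route is too strong: if your step (a) were true, your steps (b)--(c) would deliver $\|\mathbf{w}_h\|_{0,6}\lesssim|||\mathcal{W}_h|||_W$ in 3D, whereas the lemma deliberately claims only the $L^3$ bound --- precisely because $L^3$ is what survives on non-convex domains.

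Note also that the paper does not prove this lemma itself; it cites \cite{ZZX2023}, where the argument runs through a weaker but generally valid regularity ladder rather than a broken $H^1$ bound. The correct general route is: average $\mathbf{w}_h$ into a curl-conforming (N\'ed\'elec-type) field $\tilde{\mathbf{w}}_h$, with $\|\mathbf{w}_h-\tilde{\mathbf{w}}_h\|$ controlled in appropriate norms by the scaled tangential jumps (via trace and inverse inequalities, as in your step (b), and an $L^2\to L^3$ inverse estimate for the remainder); then use a Helmholtz-type decomposition together with Costabel's embedding $\mathbf{H}_0(\operatorname{curl};\Omega)\cap\mathbf{H}(\operatorname{div};\Omega)\hookrightarrow[H^{1/2}(\Omega)]^d$ on Lipschitz domains, combined with $H^{1/2}(\Omega)\hookrightarrow L^3(\Omega)$ for $d=3$ (and $H^{1/2}\hookrightarrow L^4\subset L^3$ for $d=2$), the global divergence-freeness of $\mathbf{w}_h$ being what keeps the gradient part of the decomposition under control. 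Your steps (b) and (c) are sound as stated, and your overall argument would be correct (indeed, with a stronger conclusion) if convexity of $\Omega$ were assumed; but as a proof of the lemma in the stated generality it does not go through, because its first and key step is equivalent in the limit to a continuous embedding that fails on non-convex domains.
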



In light of the trace theorem, the inverse inequality and scaling arguments, we can get the following lemma  (cf. \cite{SW2013}).
\begin{myLem}\label{lemma2}
For all $ K\in\mathcal{T}_{h}, \psi\in H^{1}(K) $, and $ 1\leq q\leq \infty $, there holds
\begin{equation*}
	\|\psi\|_{0,q,\partial K}\lesssim h_{K}^{-\frac{1}{q}}\|\psi\|_{0,q,K}+h_{K}^{1-\frac{1}{q}}|\psi|_{1,q,K}.
\end{equation*}
In particular, for all $ \psi\in \mathcal{P}_{k}(K) $,
\begin{equation*}
	\|\psi\|_{0,q,\partial K}\lesssim h_{K}^{-\frac{1}{q}}\|\psi\|_{0,q,K}.
\end{equation*}
\end{myLem}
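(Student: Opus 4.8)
The plan is to establish the first (general $H^1$) inequality by the standard reference-element scaling argument and then deduce the polynomial case from it via an inverse estimate. First I would fix the reference simplex $\hat K$ and, for each $K\in\mathcal{T}_h$, use the affine map $F_K(\hat x)=B_K\hat x+b_K$ with $F_K(\hat K)=K$. Shape regularity of $\mathcal{T}_h$ guarantees $\|B_K\|\sim h_K$, $\|B_K^{-1}\|\sim h_K^{-1}$ and $|\det B_K|\sim h_K^{d}$, with constants independent of $K$. Writing $\hat\psi:=\psi\circ F_K$, the change of variables yields the scaling relations
\begin{align*}
\|\psi\|_{0,q,K} &\sim h_K^{d/q}\|\hat\psi\|_{0,q,\hat K},\qquad
\|\psi\|_{0,q,\partial K} \sim h_K^{(d-1)/q}\|\hat\psi\|_{0,q,\partial\hat K},\\
|\psi|_{1,q,K} &\sim h_K^{d/q-1}|\hat\psi|_{1,q,\hat K},
\end{align*}
where the gradient relation reflects the extra factor $h_K^{-1}$ per derivative. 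For $q=\infty$ the exponents $d/q$ and $(d-1)/q$ degenerate to $0$ and the argument is unchanged.

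On the fixed domain $\hat K$ the continuous trace theorem $W^{1,q}(\hat K)\hookrightarrow L^q(\partial\hat K)$ gives $\|\hat\psi\|_{0,q,\partial\hat K}\lesssim\|\hat\psi\|_{0,q,\hat K}+|\hat\psi|_{1,q,\hat K}$, with a constant depending only on $\hat K$ and $q$; for $q=\infty$ this uses the embedding $W^{1,\infty}(\hat K)\hookrightarrow C(\overline{\hat K})$. Inserting the scaling relations into this bound produces
\begin{align*}
\|\psi\|_{0,q,\partial K} &\sim h_K^{(d-1)/q}\|\hat\psi\|_{0,q,\partial\hat K}\lesssim h_K^{(d-1)/q}\big(\|\hat\psi\|_{0,q,\hat K}+|\hat\psi|_{1,q,\hat K}\big)\\
&\sim h_K^{(d-1)/q}\big(h_K^{-d/q}\|\psi\|_{0,q,K}+h_K^{1-d/q}|\psi|_{1,q,K}\big)\\
&= h_K^{-1/q}\|\psi\|_{0,q,K}+h_K^{1-1/q}|\psi|_{1,q,K},
\end{align*}
which is the asserted general trace inequality.

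For the polynomial case, I would note that on the finite-dimensional space $\mathcal{P}_k(\hat K)$ all norms are equivalent, so the inverse estimate $|\hat\psi|_{1,q,\hat K}\lesssim\|\hat\psi\|_{0,q,\hat K}$ holds for $\hat\psi\in\mathcal{P}_k(\hat K)$. Transporting this back through the scaling relations gives $|\psi|_{1,q,K}\lesssim h_K^{-1}\|\psi\|_{0,q,K}$, so the second term in the general estimate is absorbed into the first, yielding $\|\psi\|_{0,q,\partial K}\lesssim h_K^{-1/q}\|\psi\|_{0,q,K}$. The argument carries no genuine difficulty; the only points requiring care are the uniform control of the affine map and the bookkeeping of the scaling exponents, both of which shape regularity supplies, together with the harmless degeneration of the exponents at the endpoint $q=\infty$.
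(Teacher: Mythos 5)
Your proof is correct and follows exactly the route the paper indicates for this lemma, which it does not prove in detail but attributes to ``the trace theorem, the inverse inequality and scaling arguments'' (citing Shi--Wang): affine scaling to the reference simplex, the trace embedding $W^{1,q}(\hat K)\hookrightarrow L^{q}(\partial\hat K)$, and norm equivalence on $\mathcal{P}_{k}(\hat K)$ for the inverse estimate. Your exponent bookkeeping, including the endpoint $q=\infty$, checks out, so your write-up is simply a complete version of the paper's sketch.
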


\begin{myLem}\label{T-lemma1*}\cite{SW2013}
For any $ K\in\mathcal{T}_{h} $, $ e\in \varepsilon_{h}$,
and
$ 1\leq j\leq s+1 $, there hold
\begin{equation*}
	\begin{aligned}
		&\|\mathrm{v}-Q^{o}_{s}\mathrm{v}\|_{0,K}+h_K|\mathrm{v}-Q_{s}^{o}\mathrm{v}|_{1,K}
		\lesssim h_K^{j}|\mathrm{v}|_{j,K},\ \forall \mathrm{v}\in H^{j}(K),
		\\
		&\|\mathrm{v}-Q^{o}_{s}\mathrm{v}\|_{0,\partial K}
		+\|\mathrm{v}-Q^{b}_{s}\mathrm{v}\|_{0,\partial K}
		\lesssim
		h_K^{j-1/2}|\mathrm{v}|_{j,K},\ \forall \mathrm{v}\in H^{j}(K),
		\\
		&\|Q^{o}_{s}\mathrm{v}\|_{0,K}\leq \|\mathrm{v}\|_{0,K},\ \forall \mathrm{v}\in L^{2}(K),\\
		&\|Q^{b}_{s}\mathrm{v}\|_{0,e}\leq \|\mathrm{v}\|_{0,e},\ \forall \mathrm{v}\in L^{2}(e).
	\end{aligned}
\end{equation*}
\end{myLem}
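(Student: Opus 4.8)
The plan is to treat $Q_s^o$ and $Q_s^b$ as the $L^2$-orthogonal projections onto $\mathcal{P}_s(K)$ and $\mathcal{P}_s(e)$, respectively, and to establish the four estimates by a standard reference-element-plus-scaling argument. The stability bounds $\|Q_s^o v\|_{0,K}\le\|v\|_{0,K}$ and $\|Q_s^b v\|_{0,e}\le\|v\|_{0,e}$ come for free: since each projection is orthogonal, the Pythagorean identity $\|Q_s^o v\|_{0,K}^2+\|v-Q_s^o v\|_{0,K}^2=\|v\|_{0,K}^2$ and its boundary analogue immediately give the contraction property. So the real work is concentrated in the two approximation estimates.

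For the interior estimate, I would fix a reference simplex $\hat K$ with affine map $F_K:\hat K\to K$ and pull $v$ back to $\hat v$ on $\hat K$. Because $Q_s^o$ reproduces every $\hat p\in\mathcal{P}_s(\hat K)$ and is an $L^2(\hat K)$-contraction, for any such $\hat p$ we have $\|\hat v-\hat Q_s^o\hat v\|_{0,\hat K}=\|(I-\hat Q_s^o)(\hat v-\hat p)\|_{0,\hat K}\le\|\hat v-\hat p\|_{0,\hat K}$; taking the infimum over $\hat p$ and invoking the Bramble--Hilbert lemma yields $\|\hat v-\hat Q_s^o\hat v\|_{0,\hat K}\lesssim|\hat v|_{j,\hat K}$ for $1\le j\le s+1$. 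For the $H^1$-seminorm I would again insert $\hat p$ and bound $|\hat Q_s^o(\hat v-\hat p)|_{1,\hat K}$ by $\|\hat v-\hat p\|_{0,\hat K}$ using the equivalence of norms on the finite-dimensional space $\mathcal{P}_s(\hat K)$, so that $|\hat v-\hat Q_s^o\hat v|_{1,\hat K}\lesssim|\hat v|_{j,\hat K}$ as well. Mapping back through $F_K$ and tracking the Jacobian powers then produces the factors $h_K^j$ and $h_K^{j-1}$ in the first displayed inequality.

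The trace estimate splits into two pieces. For the $Q_s^o$-part I would apply the continuous trace inequality $\|\hat w\|_{0,\partial\hat K}\lesssim\|\hat w\|_{1,\hat K}$ to $\hat w=\hat v-\hat Q_s^o\hat v$, combine it with the reference bounds just obtained, and scale; the surface-measure scaling of $\partial K$ supplies the extra $h_K^{-1/2}$ that turns $h_K^j$ into $h_K^{j-1/2}$. For the $Q_s^b$-part I would use that $Q_s^b$ is the best $L^2(e)$-approximation from $\mathcal{P}_s(e)$ together with the fact that the trace $(Q_s^o v)|_e$ already lies in $\mathcal{P}_s(e)$, giving $\|v-Q_s^b v\|_{0,e}\le\|v-(Q_s^o v)|_e\|_{0,e}\le\|v-Q_s^o v\|_{0,\partial K}$; summing over the faces of $K$ reduces this term to the $Q_s^o$-trace bound already in hand.

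The only genuinely non-cosmetic step is the $H^1$-seminorm control of $Q_s^o$ on the reference element, i.e. the $H^1$-stability of the $L^2$ projection, which is not automatic and relies on the finite-dimensional norm-equivalence estimate $|\hat q|_{1,\hat K}\lesssim\|\hat q\|_{0,\hat K}$ for $\hat q\in\mathcal{P}_s(\hat K)$; everything else is routine Bramble--Hilbert approximation together with careful bookkeeping of the scaling powers of $h_K$, and the whole argument is standard (cf. \cite{SW2013}).
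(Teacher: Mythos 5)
Your proposal is correct, and there is nothing in the paper to compare it against: the paper states this lemma without proof, citing the textbook \cite{SW2013}. Your argument — contraction of the $L^2$ projections via orthogonality, Bramble--Hilbert plus affine scaling for the interior estimates (with the reference-element inverse inequality supplying the $H^1$-control of $Q_s^o$), a scaled trace inequality for the $Q_s^o$ boundary term (which is exactly Lemma \ref{lemma2} of the paper with $q=2$), and the best-approximation property of $Q_s^b$ combined with the observation that $(Q_s^o\mathrm{v})|_e\in\mathcal{P}_s(e)$ — is precisely the standard route such references take, and it is complete.
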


For any
$K\in\mathcal{T}_h$, we introduce  the local Raviart-Thomas($\mathcal{RT}$) element space
\begin{equation*}
\mathbb{RT}_s(K)=[\mathcal{P}_s(K)]^d+\mathbf{x}\mathcal{P}_s(K)
\end{equation*}
and the $\mathcal{RT}$ projection operator  $\mathbf{P}_s^{\mathcal{RT}}: [H^1(K)]^d \rightarrow  \mathbb{RT}_s(K)$  (cf. \cite{BBDDFF2008}) defined by
\begin{subequations}\label{lemma91}
\begin{align}
	&\langle\mathbf{P}_s^{\mathcal{RT}}\mathbf{v}\cdot\mathbf{n}_e,w\rangle_e
	=\langle\mathbf{v}\cdot\mathbf{n}_e,w\rangle_e,\ \ \forall w\in \mathcal{P}_s(e),e\in\partial K,
	\quad \text{ for }s\geq 0,
	\label{lemma91:sub1}\\
	&(\mathbf{P}_s^{\mathcal{RT}}\mathbf{v},\mathbf{w})_K=(\mathbf{v},\mathbf{w})_K,\ \ \forall \mathbf{w}\in [\mathcal{P}_{s-1}(k)]^d, \quad  \text{ for } s\geq 1.
	\label{lemma91:sub2}
\end{align}
\end{subequations}

Lemmas \ref{lemma8}-\ref{lemma10} give some properties of the $\mathcal{RT}$ element space and the $\mathcal{RT}$ projection.

\begin{myLem}\label{lemma8}\cite{BBDDFF2008}
For any $\mathbf{v}_h\in\mathbb{RT}_s(K), \nabla\cdot\mathbf{v}_h|_K=0$ gives $\mathbf{v}_h\in [\mathcal{P}_s(K)]^d$.
\end{myLem}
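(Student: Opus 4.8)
The plan is to isolate the highest-degree homogeneous component of $\mathbf{v}_h$ and show it must vanish under the divergence-free constraint, the key tool being Euler's identity for homogeneous polynomials.

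First I would invoke the definition $\mathbb{RT}_s(K)=[\mathcal{P}_s(K)]^d+\mathbf{x}\mathcal{P}_s(K)$ to write $\mathbf{v}_h=\mathbf{p}+\mathbf{x}q$ with $\mathbf{p}\in[\mathcal{P}_s(K)]^d$ and $q\in\mathcal{P}_s(K)$. I then split $q=q_s+r$ into its homogeneous part $q_s$ of degree exactly $s$ and a remainder $r\in\mathcal{P}_{s-1}(K)$. Since $\mathbf{p}$ and $\mathbf{x}r$ both have degree at most $s$, the only contribution to $\mathbf{v}_h$ of homogeneous degree $s+1$ is $\mathbf{x}q_s$. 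Consequently it suffices to prove $q_s=0$: once this is known, $\mathbf{v}_h=\mathbf{p}+\mathbf{x}r\in[\mathcal{P}_s(K)]^d$, as desired.

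Next I would compute the divergence via the product rule $\nabla\cdot(\mathbf{x}q)=d\,q+\mathbf{x}\cdot\nabla q$, which gives
\[
\nabla\cdot\mathbf{v}_h=\nabla\cdot\mathbf{p}+d\,q+\mathbf{x}\cdot\nabla q.
\]
Here $\nabla\cdot\mathbf{p}$ has degree at most $s-1$, so the homogeneous degree-$s$ component of $\nabla\cdot\mathbf{v}_h$ comes solely from $d\,q+\mathbf{x}\cdot\nabla q$ and equals $d\,q_s+\mathbf{x}\cdot\nabla q_s$. Applying Euler's identity $\mathbf{x}\cdot\nabla q_s=s\,q_s$ for the homogeneous polynomial $q_s$ of degree $s$, this component simplifies to $(s+d)\,q_s$. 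Extracting the degree-$s$ homogeneous part from the hypothesis $\nabla\cdot\mathbf{v}_h=0$ forces $(s+d)\,q_s=0$, and since $s+d>0$ we conclude $q_s=0$, finishing the argument.

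The reasoning is elementary, and the only point deserving care is the assertion that the degree-$(s+1)$ layer of $\mathbf{v}_h$ is $\mathbf{x}q_s$ despite the decomposition $\mathbf{v}_h=\mathbf{p}+\mathbf{x}q$ not being unique. This is immediate because $[\mathcal{P}_s(K)]^d$ contains no polynomial of degree $s+1$, so the top homogeneous layer of $\mathbf{v}_h$ must originate entirely from the $\mathbf{x}\mathcal{P}_s(K)$ summand; hence no genuine obstacle arises and the whole proof reduces to a short degree-counting computation combined with Euler's identity.
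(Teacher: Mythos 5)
Your proof is correct. Note that the paper itself gives no argument for this lemma: it is quoted directly from \cite{BBDDFF2008}, so there is no ``paper proof'' to diverge from. Your argument --- write $\mathbf{v}_h=\mathbf{p}+\mathbf{x}q$, isolate the top homogeneous piece $q_s$ of $q$, and kill it via $\nabla\cdot(\mathbf{x}q_s)=(d+s)q_s$ from Euler's identity --- is precisely the standard one in that reference, where the space is usually presented as the direct sum $[\mathcal{P}_s(K)]^d\oplus\mathbf{x}\tilde{\mathcal{P}}_s(K)$ with $\tilde{\mathcal{P}}_s$ the homogeneous polynomials of degree $s$, which makes your splitting $q=q_s+r$ automatic; your extra remark correctly disposes of the non-uniqueness of the decomposition under the paper's non-direct-sum definition, so the proof is complete as written.
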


\begin{myLem}\label{lemma9}\cite{BBDDFF2008}
For any $K\in \mathcal{T}_h$ and $\mathbf{v}\in[H^1(K)]^d$, 
the following properties   hold:
\begin{equation*}
	(\nabla\cdot\mathbf{P}_s^{RT}\mathbf{v},\phi_h)_K=(\nabla\cdot\mathbf{v},\phi_h)_K, \ \
	\forall \mathbf{v}\in[H^1(K)]^d,\phi_h\in \mathcal{P}_s(K), 
\end{equation*}
\begin{equation*}
	\|\mathbf{v}-\mathbf{P}_s^{\mathcal{RT}}\mathbf{v}\|_{0,K}\lesssim h_K^j|\mathbf{v}|_{j,K},\ \ \
	\forall 1\leq j\leq s+1,\ \ \forall \mathbf{v}\in[H^j(K)]^d.
\end{equation*}
\end{myLem}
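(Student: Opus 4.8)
The plan is to treat the two assertions separately: the commuting divergence identity follows directly from the defining conditions \eqref{lemma91:sub1}--\eqref{lemma91:sub2} by integration by parts, while the approximation bound is obtained by a polynomial-preservation argument combined with the Bramble--Hilbert lemma and a scaling estimate.

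For the first identity, fix $\phi_h\in\mathcal{P}_s(K)$ and integrate by parts on $K$ to write
\[
(\nabla\cdot\mathbf{P}_s^{\mathcal{RT}}\mathbf{v},\phi_h)_K
=-(\mathbf{P}_s^{\mathcal{RT}}\mathbf{v},\nabla\phi_h)_K
+\langle\mathbf{P}_s^{\mathcal{RT}}\mathbf{v}\cdot\mathbf{n},\phi_h\rangle_{\partial K}.
\]
Since $\nabla\phi_h\in[\mathcal{P}_{s-1}(K)]^d$, the interior moment condition \eqref{lemma91:sub2} gives $(\mathbf{P}_s^{\mathcal{RT}}\mathbf{v},\nabla\phi_h)_K=(\mathbf{v},\nabla\phi_h)_K$; when $s=0$ this term simply vanishes and \eqref{lemma91:sub2} is not needed. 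On each face $e\subset\partial K$ we have $\phi_h|_e\in\mathcal{P}_s(e)$, so the normal-trace condition \eqref{lemma91:sub1} yields $\langle\mathbf{P}_s^{\mathcal{RT}}\mathbf{v}\cdot\mathbf{n},\phi_h\rangle_{\partial K}=\langle\mathbf{v}\cdot\mathbf{n},\phi_h\rangle_{\partial K}$. Substituting these two identities and integrating by parts in reverse recovers $(\nabla\cdot\mathbf{v},\phi_h)_K$, which is exactly the claimed commuting property.

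For the approximation estimate I would first record the reproduction property: because $[\mathcal{P}_s(K)]^d\subset\mathbb{RT}_s(K)$ and the degrees of freedom in \eqref{lemma91} are unisolvent on $\mathbb{RT}_s(K)$, any $\mathbf{q}\in[\mathcal{P}_s(K)]^d$ satisfies its own defining conditions, whence $\mathbf{P}_s^{\mathcal{RT}}\mathbf{q}=\mathbf{q}$. Next I would pass to the reference simplex $\hat K$ through the affine contravariant Piola map $F_K$, using that the $\mathcal{RT}$ projection commutes with this transform, so that $\widehat{\mathbf{P}_s^{\mathcal{RT}}\mathbf{v}}=\hat{\mathbf{P}}_s^{\mathcal{RT}}\hat{\mathbf{v}}$. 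On $\hat K$ the operator $\hat{\mathbf{P}}_s^{\mathcal{RT}}$ is bounded from $[H^1(\hat K)]^d$ into $[L^2(\hat K)]^d$ (the face functionals being controlled by the trace theorem) and, by the reproduction property, it fixes $[\mathcal{P}_s(\hat K)]^d$; the Bramble--Hilbert lemma then furnishes $\|\hat{\mathbf{v}}-\hat{\mathbf{P}}_s^{\mathcal{RT}}\hat{\mathbf{v}}\|_{0,\hat K}\lesssim|\hat{\mathbf{v}}|_{j,\hat K}$ for every $1\le j\le s+1$. Mapping back and invoking the scaling relations for the $L^2$ norm and the $H^j$ seminorm under $F_K$, together with shape-regularity, introduces the factor $h_K^j$ and yields $\|\mathbf{v}-\mathbf{P}_s^{\mathcal{RT}}\mathbf{v}\|_{0,K}\lesssim h_K^j|\mathbf{v}|_{j,K}$.

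The main obstacle is the scaling step in the second part: one must track the Jacobian factors arising from the Piola transform in both the $L^2$ norm and the $H^j$ seminorm and verify that the resulting constants depend only on the shape-regularity of $\mathcal{T}_h$, not on $h_K$. A secondary technical point is confirming that $\hat{\mathbf{P}}_s^{\mathcal{RT}}$ is well-defined and bounded on $[H^1(\hat K)]^d$, which is precisely why the hypothesis $\mathbf{v}\in[H^1(K)]^d$ (rather than merely $[L^2(K)]^d$) is imposed and why the estimate is stated from $j=1$ onward.
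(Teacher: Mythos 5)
Your proposal is correct, and it coincides with the standard argument in the source the paper cites: the paper itself gives no proof of this lemma, quoting it directly from \cite{BBDDFF2008}, and your two steps --- integration by parts combined with the defining conditions \eqref{lemma91:sub1}--\eqref{lemma91:sub2} for the commuting identity (with the correct observation that \eqref{lemma91:sub2} is vacuous when $s=0$), and invariance of $[\mathcal{P}_s(K)]^d$ under $\mathbf{P}_s^{\mathcal{RT}}$ plus the contravariant Piola map, Bramble--Hilbert on the reference simplex, and shape-regular scaling for the $L^2$ estimate --- are exactly how that reference proves it. Your closing remarks on the Jacobian bookkeeping and on why $\mathbf{v}\in[H^1(K)]^d$ is needed for the face degrees of freedom to be well defined are also accurate, so there is nothing to correct.
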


By using   the triangle inequality, the inverse inequality, Lemma \ref{T-lemma1*} and
Lemma \ref{lemma9} we can get more estimates for the $\mathcal{RT}$ projection (cf. \cite{CX2024}):

\begin{myLem}\label{lemma10}
For any $K\in \mathcal{T}_h$, $\mathbf{v}\in[H^j(K)]^d$ and $1\leq j\leq s+1$,
the following estimates hold:
\begin{align*}
	|\mathbf{v}-\mathbf{P}_s^{\mathcal{RT}}\mathbf{v}|_{1,K}
	&\lesssim
	h_K^{j-1}|\mathbf{v}|_{j,K}, \\
	|\mathbf{v}-\mathbf{P}_s^{\mathcal{RT}}\mathbf{v}|_{0,\partial K}
	&\lesssim
	h_K^{j-\frac{1}{2}}|\mathbf{v}|_{j,K},
	\\
	|\mathbf{v}-\mathbf{P}_s^{\mathcal{RT}}\mathbf{v}|_{0,3,K}&\lesssim h_K^{j-\frac{d}{6}}|\mathbf{v}|_{j,K},\\
	|\mathbf{v}-\mathbf{P}_s^{\mathcal{RT}}\mathbf{v}|_{0,3,\partial K}&\lesssim h_K^{j-\frac{1}{3}-\frac{d}{6}}|\mathbf{v}|_{j,K}.
\end{align*}
\end{myLem}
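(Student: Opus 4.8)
The plan is to reduce all four estimates to the interior $L^2$-error bound of Lemma \ref{lemma9} and the $L^2$-approximation estimates of the local $L^2$-projection $Q_s^o$ in Lemma \ref{T-lemma1*}, by peeling off a piecewise polynomial to which the inverse inequality and the polynomial trace inequality (Lemma \ref{lemma2}) can be applied. Throughout I set $\mathbf{g}_h:=Q_s^o\mathbf{v}-\mathbf{P}_s^{\mathcal{RT}}\mathbf{v}$, with $Q_s^o$ acting componentwise. Since $Q_s^o\mathbf{v}\in[\mathcal{P}_s(K)]^d\subset\mathbb{RT}_s(K)$ and $\mathbf{P}_s^{\mathcal{RT}}\mathbf{v}\in\mathbb{RT}_s(K)$, the function $\mathbf{g}_h$ is a vector polynomial of degree at most $s+1$ on $K$, and the triangle inequality together with Lemma \ref{T-lemma1*} and Lemma \ref{lemma9} gives $\|\mathbf{g}_h\|_{0,K}\le\|\mathbf{v}-Q_s^o\mathbf{v}\|_{0,K}+\|\mathbf{v}-\mathbf{P}_s^{\mathcal{RT}}\mathbf{v}\|_{0,K}\lesssim h_K^{j}|\mathbf{v}|_{j,K}$. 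This single bound drives the whole argument.

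For the $H^1$-seminorm estimate I split $|\mathbf{v}-\mathbf{P}_s^{\mathcal{RT}}\mathbf{v}|_{1,K}\le|\mathbf{v}-Q_s^o\mathbf{v}|_{1,K}+|\mathbf{g}_h|_{1,K}$; the first term is $\lesssim h_K^{j-1}|\mathbf{v}|_{j,K}$ by Lemma \ref{T-lemma1*}, and the inverse inequality applied to the polynomial $\mathbf{g}_h$ gives $|\mathbf{g}_h|_{1,K}\lesssim h_K^{-1}\|\mathbf{g}_h\|_{0,K}\lesssim h_K^{j-1}|\mathbf{v}|_{j,K}$. The boundary $L^2$-estimate then follows at once from Lemma \ref{lemma2} with $q=2$, $\|\mathbf{v}-\mathbf{P}_s^{\mathcal{RT}}\mathbf{v}\|_{0,\partial K}\lesssim h_K^{-1/2}\|\mathbf{v}-\mathbf{P}_s^{\mathcal{RT}}\mathbf{v}\|_{0,K}+h_K^{1/2}|\mathbf{v}-\mathbf{P}_s^{\mathcal{RT}}\mathbf{v}|_{1,K}$, into which I insert the $L^2$-bound of Lemma \ref{lemma9} and the $H^1$-bound just obtained; both contributions scale like $h_K^{j-1/2}|\mathbf{v}|_{j,K}$.

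The two $L^3$-estimates use the same splitting, but the polynomial part is now handled with the $L^3$--$L^2$ inverse inequality $\|\mathbf{g}_h\|_{0,3,K}\lesssim h_K^{-d/6}\|\mathbf{g}_h\|_{0,K}\lesssim h_K^{j-d/6}|\mathbf{v}|_{j,K}$. The interior $L^3$-estimate follows once the remainder $\|\mathbf{v}-Q_s^o\mathbf{v}\|_{0,3,K}$ is controlled, and the boundary $L^3$-estimate is then reached by applying the polynomial trace inequality (second form of Lemma \ref{lemma2}) with $q=3$ to $\mathbf{g}_h$, which yields $h_K^{-1/3}\|\mathbf{g}_h\|_{0,3,K}$, and the general trace inequality (first form) with $q=3$ to $\mathbf{v}-Q_s^o\mathbf{v}$; collecting powers of $h_K$ produces the exponent $j-1/3-d/6$.

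I expect the main obstacle to be the only place where the cited $L^2$ results are not enough, namely bounding the non-polynomial remainder $\mathbf{v}-Q_s^o\mathbf{v}$ in the $L^3$- and $W^{1,3}$-norms, where the inverse inequality is unavailable. I would close this gap with the $L^3$/$W^{1,3}$ analogues of Lemma \ref{T-lemma1*}, that is $\|\mathbf{v}-Q_s^o\mathbf{v}\|_{0,3,K}\lesssim h_K^{j-d/6}|\mathbf{v}|_{j,K}$ and $|\mathbf{v}-Q_s^o\mathbf{v}|_{1,3,K}\lesssim h_K^{j-1-d/6}|\mathbf{v}|_{j,K}$, which follow either from a Bramble--Hilbert argument carried out directly in $L^3$ with the usual scaling, or, more economically, by combining the $L^2$- and $H^1$-approximation estimates already in hand with the Sobolev embedding $H^1(\hat K)\hookrightarrow L^3(\hat K)$ on the reference cell (valid for $d=2,3$) and the scaling $\|f\|_{0,3,K}\lesssim h_K^{-d/6}\|f\|_{0,K}+h_K^{1-d/6}|f|_{1,K}$. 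Once these remainder bounds are granted, all four estimates assemble by the triangle inequality as described and the exponents match.
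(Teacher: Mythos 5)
Your architecture is the one the paper itself indicates (it gives no detailed proof, only the recipe ``triangle inequality + inverse inequality + Lemma~\ref{T-lemma1*} + Lemma~\ref{lemma9}'' with a citation): peel off a polynomial $\mathbf{g}_h=Q_s^o\mathbf{v}-\mathbf{P}_s^{\mathcal{RT}}\mathbf{v}\in\mathbb{RT}_s(K)$, control it in $L^2$ by the two cited approximation results, then upgrade by inverse and trace inequalities. Your first three estimates are correct as written, and the exponent bookkeeping ($h_K^{-1}$ for the inverse inequality, $h_K^{-1/2}$ in Lemma~\ref{lemma2} with $q=2$, $h_K^{-d/6}$ for the $L^3$--$L^2$ inverse estimate) checks out.

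There is, however, a genuine gap in the fourth estimate. Your route through the first form of Lemma~\ref{lemma2} with $q=3$ applied to $\mathbf{v}-Q_s^o\mathbf{v}$ requires the quantity $|\mathbf{v}-Q_s^o\mathbf{v}|_{1,3,K}$, hence $\mathbf{v}\in [W^{1,3}(K)]^d$. But the lemma allows $j=1$, where the hypothesis is only $\mathbf{v}\in[H^1(K)]^d$, and $H^1(K)\not\hookrightarrow W^{1,3}(K)$ for $d=2,3$: the term you propose to bound by $h_K^{j-1-d/6}|\mathbf{v}|_{j,K}$ may simply be infinite. Your two suggested fixes do not close this: a Bramble--Hilbert argument carried out in $L^3$ produces $|\mathbf{v}|_{j,3,K}$ on the right-hand side rather than the stated $|\mathbf{v}|_{j,K}$ (and $H^j\not\subset W^{j,3}$, so one cannot convert), while the embedding route for the gradient needs an $H^2$-bound on $\mathbf{v}-Q_s^o\mathbf{v}$, again unavailable at $j=1$. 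The correct repair is the \emph{trace} analogue of the scaled embedding you already wrote down: boundedness of the trace map $H^1(\hat K)\to L^3(\partial\hat K)$ on the reference simplex (valid for $d=2,3$) together with the scalings $\|f\|_{0,3,\partial K}\sim h_K^{(d-1)/3}\|\hat f\|_{0,3,\partial\hat K}$, $\|\hat f\|_{0,\hat K}\sim h_K^{-d/2}\|f\|_{0,K}$, $|\hat f|_{1,\hat K}\lesssim h_K^{1-d/2}|f|_{1,K}$ yields, for every $f\in H^1(K)$,
\begin{equation*}
\|f\|_{0,3,\partial K}\lesssim h_K^{-\frac13-\frac d6}\|f\|_{0,K}+h_K^{\frac23-\frac d6}|f|_{1,K},
\end{equation*}
since $\frac{d-1}{3}-\frac d2=-\frac13-\frac d6$. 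Applying this directly to $f=\mathbf{v}-\mathbf{P}_s^{\mathcal{RT}}\mathbf{v}$ and inserting the $L^2$- and $H^1$-bounds you established in your first two steps gives $h_K^{-1/3-d/6}h_K^{j}+h_K^{2/3-d/6}h_K^{j-1}\lesssim h_K^{j-\frac13-\frac d6}$, covering all $1\le j\le s+1$ with no splitting and no $W^{1,3}$ regularity. The same remark simplifies your third estimate: the scaled embedding $\|f\|_{0,3,K}\lesssim h_K^{-d/6}\|f\|_{0,K}+h_K^{1-d/6}|f|_{1,K}$ can be applied to the full error at once, making the decomposition and the remainder bounds for $\mathbf{v}-Q_s^o\mathbf{v}$ in $L^3$ unnecessary.
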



\section{Existence and uniqueness of discrete solution}
\subsection{Stability conditions}
\begin{myLem}\label{lemma13}
For any $ \mathcal{U}_h, \mathcal{V}_h,\Phi_h \in
[\mathbf{V}_h\times\hat{\mathbf{V}}_h^0]$, $\mathcal{B}_h, \mathcal{W}_h \in[\mathbf{V}_h\times\hat{\mathbf{W}}_h^0]$, and $\mathcal{T}_h, \mathcal{Z}_h \in [Z_h\times\hat{Z}_h^0]$, there hold the following stability conditions:
\begin{subequations}\label{lemma13results}
	\begin{align}
		&c_{1h}(\Phi_h;\mathcal{U}_h,\mathcal{V}_h)
		\lesssim
		|||\Phi_h|||_V|||\mathcal{U}_h|||_V|||\mathcal{V}_h|||_V,
		\label{lemma13results:sub9}\\
		&c_{2h}(\mathcal{V}_h;\mathcal{B}_h,\mathcal{W}_h)
		\lesssim
		|||\mathcal{B}_h|||_W|||\mathcal{W}_h|||_W|||\mathcal{V}_h|||_V,
		\label{lemma13results:sub10}\\
		&c_{3h}(\mathcal{U}_h;\mathcal{T}_h,\mathcal{Z}_h)
		\lesssim
		|||\mathcal{U}_h|||_V|||\mathcal{T}_h|||_Z|||\mathcal{Z}_h|||_Z.
		\label{lemma13results:sub12}
	\end{align}
\end{subequations}

\end{myLem}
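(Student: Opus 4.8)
The plan is to treat each of the three trilinear forms by splitting it into its element-interior (volume) part and its interface (boundary) part, and to bound every piece by the generalized H\"older inequality followed by the discrete Sobolev embeddings of Lemma~\ref{lemma7} and Lemma~\ref{lemma12*}, with the norm equivalences of Lemma~\ref{lemma3} used to convert elementwise quantities into the $|||\cdot|||$-norms. For the volume parts of $c_{1h}$ and $c_{3h}$, i.e. the terms $(\mathbf{u}_h\otimes\phi_h,\nabla_h\mathbf{v}_h)$, $(\mathbf{v}_h\otimes\phi_h,\nabla_h\mathbf{u}_h)$ and $(\mathbf{u}_hT_h,\nabla_hz_h)$, $(\mathbf{u}_hz_h,\nabla_hT_h)$, I would apply H\"older with the exponent triple $(4,4,2)$, placing the two undifferentiated factors in $L^4$ and the gradient factor in $L^2$, and then invoke $\|\mathbf{v}_h\|_{0,4}\lesssim|||\mathcal V_h|||_V$, $\|T_h\|_{0,4},\|z_h\|_{0,4}\lesssim|||\mathcal Z_h|||_Z$ (admissible for $d=2,3$ since $4\le6$) together with $\|\nabla_h\mathbf{v}_h\|_0\lesssim|||\mathcal V_h|||_V$ and $\|\nabla_hz_h\|_0\lesssim|||\mathcal Z_h|||_Z$.

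For the volume part of $c_{2h}$ I would first integrate by parts elementwise to move the curl off $\mathbf{v}_h\times\mathbf{B}_h$ and onto $\mathbf{w}_h$, so that no derivative falls on $\mathbf{B}_h$ or $\mathbf{v}_h$; the resulting term $(\nabla_h\times\mathbf{w}_h,\mathbf{v}_h\times\mathbf{B}_h)$ is then estimated by H\"older with the triple $(2,6,3)$, using $\|\nabla_h\times\mathbf{w}_h\|_0\lesssim|||\mathcal W_h|||_W$, $\|\mathbf{v}_h\|_{0,6}\lesssim|||\mathcal V_h|||_V$, and the crucial bound $\|\mathbf{B}_h\|_{0,3}\lesssim|||\mathcal B_h|||_W$ from Lemma~\ref{lemma12*}. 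This exponent choice is forced: since Lemma~\ref{lemma12*} only supplies $L^3$ control of the magnetic field, the velocity must be measured in $L^6$, which is precisely the borderline case admitted by Lemma~\ref{lemma7} when $d=3$.

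The interface terms require more care. For the boundary contributions of $c_{1h}$ and $c_{3h}$, such as $\langle\hat{\mathbf{u}}_h\otimes\hat\phi_h\mathbf{n},\mathbf{v}_h\rangle_{\partial\mathcal T_h}$ and $\langle\hat{\mathbf{u}}_h\hat T_h\mathbf{n},z_h\rangle_{\partial\mathcal T_h}$, I would exploit the single-valuedness of the numerical traces across interior faces together with the homogeneous conditions $\hat{\mathbf{v}}_h|_{\partial\Omega}=0$, $\hat z_h|_{\partial\Omega}=0$: subtracting the fully-traced quantity, whose element-boundary sum vanishes because adjacent outward normals are opposite, leaves a jump factor $\mathbf{v}_h-\hat{\mathbf v}_h$ (resp. $z_h-\hat z_h$). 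After the integration by parts above, the interface part of $c_{2h}$ combines with the trace term already present in its definition to produce $\langle(\mathbf{w}_h-\hat{\mathbf w}_h)\times\mathbf{n},\mathbf{v}_h\times\mathbf{B}_h\rangle_{\partial\mathcal T_h}$, exhibiting the tangential jump. I would then bound each face integral by H\"older on $\partial K$ with the same exponent triples, convert the face $L^q$-norms of the polynomial factors into interior $L^q$-norms by the discrete trace inequality of Lemma~\ref{lemma2}, and absorb the weight $\tau^{1/2}=h_K^{-1/2}$ of the jump factor into the $|||\cdot|||$-norms via Lemma~\ref{lemma3}. Summing over elements with a discrete H\"older inequality in the element index, again with $(4,4,2)$ for $c_{1h},c_{3h}$ and $(2,6,3)$ for $c_{2h}$, and re-applying the embeddings recovers the global triple product of $|||\cdot|||$-norms.

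The main obstacle is exactly this bookkeeping of mesh-size powers in the interface terms: one must check that the negative powers of $h_K$ generated by the discrete trace inequality (one factor $h_K^{-1/q}$ per polynomial factor) combine with the $h_K^{-1/2}$ of the stabilization norm to give a scale-invariant bound, and that the elementwise H\"older exponents and the discrete H\"older exponents over elements are chosen consistently so that each $L^q$-in-space / $\ell^p$-in-elements pairing sums into a global Lebesgue norm controlled by the embeddings. The $d=2$ versus $d=3$ distinction governing the admissible Sobolev range in Lemma~\ref{lemma7} must be respected throughout; for $c_{2h}$ the three-dimensional case is the tight one and leaves no slack.
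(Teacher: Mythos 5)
Your proposal is correct and follows essentially the same route as the paper's proof: the paper likewise splits each trilinear form into volume and face parts (introducing the jump factors $\mathbf{u}_h-\hat{\mathbf{u}}_h$, $\phi_h-\hat{\phi}_h$ by an algebraic rearrangement equivalent to your single-valuedness/telescoping argument), performs the same elementwise integration by parts for $c_{2h}$ to reach $(\nabla_h\times\mathbf{w}_h,\mathbf{v}_h\times\mathbf{B}_h)-\langle(\mathbf{w}_h-\hat{\mathbf{w}}_h)\times\mathbf{n},\mathbf{v}_h\times\mathbf{B}_h\rangle_{\partial\mathcal{T}_h}$, and closes every term with H\"older, the trace/inverse inequalities of Lemma~\ref{lemma2}, and the embeddings of Lemmas~\ref{lemma7} and~\ref{lemma12*}, including precisely the forced $(2,3,6)$ pairing with $\|\mathbf{B}_h\|_{0,3}\lesssim|||\mathcal{B}_h|||_W$ that you single out. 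The only cosmetic difference is that the paper varies the face H\"older exponents term by term (e.g.\ $(3,2,6)$ for the jump--jump piece and $(4,2,4)$ for the single-jump pieces) instead of reusing one triple throughout, but your arrangement balances the $h_K$-powers just as well, so nothing substantive changes.
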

\begin{proof}
	
	For all $\mathcal{U}_h,\mathcal{V}_h,\Phi_h\in[\mathbf{V}_h\times\hat{\mathbf{V}}_h^0]$, using
	the definitions of $c_{1h}(\cdot;\cdot,\cdot)$  we have
	\begin{align*}
		2c_{1h}(\Phi_h;\mathcal{U}_h,\mathcal{V}_h)&=
		\frac{1}{N}\{
		(\mathbf{v}_{h}\otimes\phi_{h},\nabla_h\mathbf{u}_{h})-
		(\mathbf{u}_{h}\otimes\phi_{h},\nabla_h\mathbf{v}_{h})\\
		&\ \ \ \
		-\langle\hat{\mathbf{v}}_{h}\otimes\hat{\phi}_{h}\mathbf{n},\mathbf{u}_{h}\rangle_{\partial\mathcal{T}_h}
		+\langle\hat{\mathbf{u}}_{h}\otimes\hat{\phi}_{h}\mathbf{n},\mathbf{v}_{h}\rangle_{\partial\mathcal{T}_h}\}
		\\
		&=\frac{1}{N}\{(\mathbf{v}_{h}\otimes\phi_{h},\nabla_h\mathbf{u}_{h})-
		(\mathbf{u}_{h}\otimes\phi_{h},\nabla_h\mathbf{v}_{h})\\
		&\ \ \ \ +\langle(\mathbf{u}_{h}-\hat{\mathbf{u}}_{h})
		\otimes(\phi_{h}-\hat{\phi}_{h})
		\mathbf{n},\mathbf{v}_{h}\rangle_{\partial\mathcal{T}_h}
		-\langle(\mathbf{u}_{h}-\hat{\mathbf{u}}_{h})
		\otimes\phi_{h}\mathbf{n},\mathbf{v}_{h}\rangle_{\partial\mathcal{T}_h}\\
		&\ \ \ \ -\langle(\mathbf{v}_{h}-\hat{\mathbf{v}}_{h})
		\otimes(\phi_{h}-\hat{\phi}_{h})
		\mathbf{n},\mathbf{u}_{h}\rangle_{\partial\mathcal{T}_h}
		+\langle(\mathbf{v}_{h}-\hat{\mathbf{v}}_{h})
		\otimes\phi_{h}\mathbf{n},\mathbf{u}_{h}\rangle_{\partial\mathcal{T}_h}\}\\
		&
		=:\sum_{i=1}^5\mathcal{M}_i.
	\end{align*}
	Combining the H\"{o}lder's inequality and Lemma \ref{lemma7}, we have
	\begin{align*}
		|\mathcal{M}_1|&=\frac{1}{N}|(\mathbf{v}_{h}\otimes\phi_{h},\nabla_h\mathbf{u}_{h})-
		(\mathbf{u}_{h}\otimes\phi_{h},\nabla_h\mathbf{v}_{h})|\\
		&\leq\frac{1}{N}(
		\|\mathbf{v}_{h}\|_{0,4}\|\phi_{h}\|_{0,4}\|\nabla_h\mathbf{u}_{h}\|_{0,2}+
		\|\mathbf{u}_{h}\|_{0,4}\|\phi_{h}\|_{0,4}\|\nabla_h\mathbf{v}_{h}\|_{0,2})\\
		&
		\lesssim\frac{1}{N}|||\Phi_h|||_V |||\mathcal{U}_h|||_V |||\mathcal{V}_h|||_V.
	\end{align*}
	
	Using the H\"{o}lder's inequality,  the inverse inequality, Lemma \ref{lemma2} and Lemma \ref{lemma7}, we have
	\begin{align*}
		|\mathcal{M}_2|&=\frac{1}{N}|\langle(\mathbf{u}_{h}-\hat{\mathbf{u}}_{h})
		\otimes(\phi_{h}-\hat{\phi}_{h})
		\mathbf{n},\mathbf{v}_{h}\rangle_{\partial\mathcal{T}_h} |\\
		&\leq\frac{1}{N}\sum_{K\in\mathcal{T}_h}\|\phi_{h}-\hat{\phi}_{h}\|_{0,3,\partial K}
		\|\mathbf{u}_{h}-\hat{\mathbf{u}}_{h}\|_{0,2,\partial K}
		\|\mathbf{v}_{h}\|_{0,6,\partial K}\\
		&\leq\frac{1}{N}\sum_{K\in\mathcal{T}_h}h_K^{-\frac{d-1}{6}}\|\phi_{h}-\hat{\phi}_{h}\|_{0,2,\partial K}
		\|\mathbf{u}_{h}-\hat{\mathbf{u}}_{h}\|_{0,2,\partial K}
		h_K^{-\frac{1}{6}}\|\mathbf{v}_{h}\|_{0,6,K}\\
		&\leq\frac{1}{N}\sum_{K\in\mathcal{T}_h}h_K^{-\frac{1}{2}}\|\phi_{h}-\hat{\phi}_{h}\|_{0,2,\partial K}
		h_K^{-\frac{1}{2}}\|\mathbf{u}_{h}-\hat{\mathbf{u}}_{h}\|_{0,2,\partial K}
		h_K^{1-\frac{d}{6}}\|\mathbf{v}_{h}\|_{0,6,K}\\
		&\lesssim|||\Phi_h|||_V |||\mathcal{U}_h|||_V |||\mathcal{V}_h|||_V,
		\\
		|\mathcal{M}_3|&=\frac{1}{N}|-\langle(\mathbf{u}_{h}-\hat{\mathbf{u}}_{h})
		\otimes\phi_{h}\mathbf{n},\mathbf{v}_{h}\rangle_{\partial\mathcal{T}_h}|\\
		&\leq\frac{1}{N}\sum_{K\in\mathcal{T}_h}\|\phi_{h}\|_{0,4,\partial K}
		\|\mathbf{u}_{h}-\hat{\mathbf{u}}_{h}\|_{0,2,\partial K}
		\|\mathbf{v}_{h}\|_{0,4,\partial K}\\
		&\leq\frac{1}{N}\sum_{K\in\mathcal{T}_h}h_K^{-\frac{1}{4}}\|\phi_{h}\|_{0,4,K}
		\|\mathbf{u}_{h}-\hat{\mathbf{u}}_{h}\|_{0,2,\partial K}
		h_K^{-\frac{1}{4}}\|\mathbf{v}_{h}\|_{0,4,K}\\
		&\leq\frac{1}{N}\sum_{K\in\mathcal{T}_h}\|\phi_{h}\|_{0,4,K}
		h_K^{-\frac{1}{2}}\|\mathbf{u}_{h}-\hat{\mathbf{u}}_{h}\|_{0,2,\partial K}
		\|\mathbf{v}_{h}\|_{0,4,K}\\
		&\lesssim|||\Phi_h|||_V |||\mathcal{U}_h|||_V |||\mathcal{V}_h|||_V.
	\end{align*}
	Similarly, we can get
	\begin{align*}
		|\mathcal{M}_4|+|\mathcal{M}_5|\lesssim|||\Phi_h|||_V |||\mathcal{U}_h|||_V |||\mathcal{V}_h|||_V.
	\end{align*}
	Combining the above estimates yields the inequality (\ref{lemma13results:sub9}),
	and the proof of (\ref{lemma13results:sub12}) is similar.
	
	Then, for any $\mathcal{B}_h, \mathcal{W}_h \in [\mathbf{V}_h\times\hat{\mathbf{W}}_h^0]$, by the definitions of $c_{2h}(\cdot;\cdot,\cdot)$ we have
	\begin{align*}
		c_{2h}(\mathcal{V}_h;\mathcal{B}_h,\mathcal{W}_h)
		=\frac{1}{R_m}(\nabla_h\times\mathbf{w}_{h},\mathbf{v}_{h}\times\mathbf{B}_{h})
		-\frac{1}{R_m}\langle(\mathbf{w}_{h}-\hat{\mathbf{w}}_{h})
		\times\mathbf{n},\mathbf{v}_{h}\times\mathbf{B}_{h}\rangle_{\partial\mathcal{T}_h}.
	\end{align*}
	Using the H\"{o}lder's inequality, the inverse inequality, Lemmas \ref{lemma7}, \ref{lemma12*},and \ref{lemma2} again gives
	\begin{align*}
		| (\nabla_h\times\mathbf{w}_{h},\mathbf{v}_{h}\times\mathbf{B}_{h})|
		&\leq|\nabla_h\times\mathbf{w}_{h}|_{0,2}|\mathbf{B}_{h}|_{0,3}|\mathbf{v}_{h}|_{0,6}\\
		&\lesssim|||\mathbf{w}_{h}|||_W|||\mathbf{B}_{h}|||_W|||\mathbf{v}_{h}|||_V,
		\\
		|\langle(\mathbf{w}_{h}-\hat{\mathbf{w}}_{h})
		\times\mathbf{n},\mathbf{v}_{h}\times\mathbf{B}_{h}\rangle_{\partial\mathcal{T}_h}|
		&\leq\sum_{K\in\mathcal{T}_h}|(\mathbf{w}_{h}-\hat{\mathbf{w}}_{h})\times\mathbf{n}|_{0,2,\partial K}
		|\mathbf{B}_{h}|_{0,3,\partial K}
		|\mathbf{v}_{h}|_{0,6,\partial K}
		\\
		&\lesssim\sum_{K\in\mathcal{T}_h}
		|(\mathbf{w}_{h}-\hat{\mathbf{w}}_{h})\times\mathbf{n}|_{0,2,\partial K}
		h_K^{-\frac{1}{3}}|\mathbf{B}_{h}|_{0,3,K}
		h_K^{-\frac{1}{6}}|\mathbf{v}_{h}|_{0,6,K}
		\\
		&\lesssim\sum_{K\in\mathcal{T}_h}
		h_K^{-\frac{1}{2}}|(\mathbf{w}_{h}-\hat{\mathbf{w}}_{h})\times\mathbf{n}|_{0,2,\partial K}
		|\mathbf{B}_{h}|_{0,3,K}
		|\mathbf{v}_{h}|_{0,6,K}\\
		&\lesssim
		|||\mathbf{w}_{h}|||_W|||\mathbf{B}_{h}|||_W|||\mathbf{v}_{h}|||_V.
	\end{align*}
	
	As a result, the desired inequality (\ref{lemma13results:sub10}) follows.
\end{proof}

\begin{myLem}\cite{CX2024,ZZX2023}\label{lemma15}
There hold the following inf-sup inequalities:
\begin{align}\label{T-inf-sup-bh}
	&\sup_{\mathcal{V}_h\in [\mathbf{V}_h\times\hat{\mathbf{V}}_h^0]}\frac{b_{1h}(\mathcal{V}_h,\mathcal{Q}_h)}{|||\mathbf{V}_h|||_V}\gtrsim|||\mathcal{Q}_h|||_Q, \quad \forall  \mathcal{Q}_h\in   [Q_h\times\hat{Q}_h^0],\\
	&\sup_{\mathcal{W}_h\in [\mathbf{V}_h\times\hat{\mathbf{W}}_h^0]}\frac{b_{2h}(\mathcal{W}_h,\mathcal{\theta}_h)}{|||\mathcal{W}_h|||_W}\gtrsim|||\mathcal{\theta}_h|||_R, \quad \forall  \mathcal{\theta}_h \in   [Q_h\times\hat{R}_h^0].\label{T-inf-sup-tildebh}
\end{align}
\end{myLem}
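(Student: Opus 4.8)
The plan is to establish both inequalities by a Fortin-type construction: for each prescribed pressure pair I will build an explicit velocity (resp. magnetic) test field that makes $b_{1h}$ (resp. $b_{2h}$) control the \emph{square} of the pressure norm while having its own $V$-norm (resp. $W$-norm) controlled by the pressure norm. Concretely, for \eqref{T-inf-sup-bh} it suffices to produce, for every $\mathcal{Q}_h=\{q_h,\hat q_h\}$, a pair $\mathcal{V}_h$ with $b_{1h}(\mathcal{V}_h,\mathcal{Q}_h)\gtrsim |||\mathcal{Q}_h|||_Q^2$ and $|||\mathcal{V}_h|||_V\lesssim |||\mathcal{Q}_h|||_Q$; dividing and taking the supremum then yields the claim. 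I would split $\mathcal{V}_h=\mathcal{V}_h^1+\mathcal{V}_h^2$ so as to separately capture the volumetric contribution $\|q_h\|_0$ and the jump contribution $\sum_K h_K^2\|q_h-\hat q_h\|_{0,\partial K}^2$ entering $|||\mathcal{Q}_h|||_Q$.

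For the volumetric piece, since $q_h\in L_0^2(\Omega)$, the continuous surjectivity of the divergence from $[H_0^1(\Omega)]^d$ onto $L_0^2(\Omega)$ furnishes $\mathbf{w}\in[H_0^1(\Omega)]^d$ with $\nabla\cdot\mathbf{w}=-q_h$ and $\|\mathbf{w}\|_1\lesssim\|q_h\|_0$. I then set $\mathbf{v}_h^1=\mathbf{P}_{k-1}^{\mathcal{RT}}\mathbf{w}$; since $\mathbb{RT}_{k-1}(K)\subset[\mathcal{P}_k(K)]^d$ we have $\mathbf{v}_h^1\in\mathbf{V}_h$, and the divergence-preserving property of Lemma \ref{lemma9} gives $(\nabla\cdot\mathbf{v}_h^1,q_h)_K=(\nabla\cdot\mathbf{w},q_h)_K=-\|q_h\|_{0,K}^2$. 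The normal trace of $\mathbf{v}_h^1$ is single-valued and lies in $\mathcal{P}_{k-1}(e)$ on each face, so its pairing against the single-valued $\hat q_h$ telescopes to zero across interior faces and vanishes on $\partial\Omega$ (where $\mathbf{w}\cdot\mathbf{n}=0$); hence $b_{1h}(\mathcal{V}_h^1,\mathcal{Q}_h)=\|q_h\|_0^2$. Choosing $\hat{\mathbf{v}}_h^1$ as the $L^2(e)$-projection of the (single-valued) trace of $\mathbf{w}$ and invoking the approximation bounds of Lemmas \ref{lemma9}--\ref{lemma10} together with the norm equivalence of Lemma \ref{lemma3} yields $|||\mathcal{V}_h^1|||_V\lesssim\|\mathbf{w}\|_1\lesssim\|q_h\|_0$. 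For the jump piece I would use a local, element-by-element construction: on each $K$ take a degree-$k$ field $\mathbf{v}_h^2$ with prescribed normal trace $\mathbf{v}_h^2\cdot\mathbf{n}|_e=-h_e^2(q_h-\hat q_h)|_e$ and vanishing interior moments, and $\hat{\mathbf{v}}_h^2$ the corresponding edge projection, so that after integration by parts $b_{1h}(\mathcal{V}_h^2,\mathcal{Q}_h)$ reproduces $\sum_K h_K^2\|q_h-\hat q_h\|_{0,\partial K}^2$ up to a cross term $(\mathbf{v}_h^2,\nabla_h q_h)$ that a scaling/inverse estimate plus Young's inequality absorbs into a fraction of the jump part and a multiple of $\|q_h\|_0^2$; the same scaling bounds $|||\mathcal{V}_h^2|||_V$ by the jump seminorm. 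Taking $\mathcal{V}_h^1$ with a sufficiently large coefficient closes (a)--(b).

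For \eqref{T-inf-sup-tildebh} the form $b_{2h}$ has the same algebraic shape as $b_{1h}$, but the decisive difference is that $|||\cdot|||_W$ controls only the curl $\|\nabla_h\times\mathbf{w}_h\|_0$ and the tangential jump, not the full gradient, so a direct divergence lifting would have an uncontrolled norm. The remedy, which is the heart of the argument, is a \emph{curl-free} lifting: solve the Dirichlet problem $\Delta\psi=-\theta_h$ in $\Omega$ with $\psi=0$ on $\partial\Omega$, and set $\mathbf{w}=\nabla\psi$. Then $\nabla\times\mathbf{w}=0$, $\nabla\cdot\mathbf{w}=-\theta_h$, and elliptic regularity gives $\|\mathbf{w}\|_1\lesssim\|\psi\|_2\lesssim\|\theta_h\|_0$; moreover $\psi|_{\partial\Omega}=0$ forces the tangential trace $\mathbf{w}\times\mathbf{n}=0$ on $\partial\Omega$, so $\mathbf{w}$ is compatible with the boundary condition built into $\hat{\mathbf{W}}_h^0$. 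Discretizing by $\mathbf{w}_h=\mathbf{P}_{k-1}^{\mathcal{RT}}\mathbf{w}$ with $\hat{\mathbf{w}}_h$ the edge projection of the trace, the boundary pairing against $\hat\theta_h$ vanishes exactly as before (interior telescoping plus $\hat\theta_h|_{\partial\Omega}=0$), so the volumetric term equals $\tfrac1{R_m}\|\theta_h\|_0^2$. Crucially, $\nabla\times\mathbf{w}=0$ reduces $\nabla_h\times\mathbf{w}_h=\nabla_h\times(\mathbf{P}_{k-1}^{\mathcal{RT}}\mathbf{w}-\mathbf{w})$, which Lemma \ref{lemma10} bounds by $|\mathbf{w}|_1\lesssim\|\theta_h\|_0$; combined with the jump bound and the fact that $\bar\theta_h=0$ (as $Q_h\subset L_0^2(\Omega)$), this gives $|||\mathcal{W}_h|||_W\lesssim\|\theta_h\|_0=\|\theta_h-\bar\theta_h\|_0$. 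The jump contribution to $|||\theta_h|||_R$ is handled by the identical local bubble construction as in the first case.

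The main obstacle is precisely the control of the curl in the second inf-sup; everything else is routine once the curl-free lifting is installed. A secondary technical point is the verification of the exact $h$-powers in the jump construction and the $H^2$-regularity of $\psi$, which on a polygonal/polyhedral domain requires either a convexity hypothesis or a standard Helmholtz-decomposition argument; I would invoke the corresponding estimates already recorded in \cite{CX2024,ZZX2023}.
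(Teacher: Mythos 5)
The paper does not actually prove this lemma; it imports it from \cite{CX2024,ZZX2023}. Your treatment of \eqref{T-inf-sup-bh} matches the standard argument behind those references and is sound: the Bogovskii-type lifting $\nabla\cdot\mathbf{w}=-q_h$, $\|\mathbf{w}\|_1\lesssim\|q_h\|_0$, composed with $\mathbf{P}_{k-1}^{\mathcal{RT}}$ so that Lemma \ref{lemma9} preserves the divergence moments against $q_h\in\mathcal{P}_{k-1}(K)$, the telescoping of the single-valued normal trace against $\hat q_h$ (with $\mathbf{w}\cdot\mathbf{n}=0$ on $\partial\Omega$ killing the boundary faces, since $\hat Q_h^0$ carries no boundary condition), the local normal-trace lifting for the jump part with Young absorption of the cross term $(\mathbf{v}_h^2,\nabla_h q_h)$, and the norm control via Lemmas \ref{lemma3} and \ref{lemma10} all close correctly.

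The genuine problem is in your treatment of \eqref{T-inf-sup-tildebh}, and it stems from a misdiagnosis. You assert that ``a direct divergence lifting would have an uncontrolled norm'' because $|||\cdot|||_W$ only sees the curl — but that makes the direct lifting \emph{easier}, not harder: by \eqref{lemma41:sub4}, $|||\mathcal{W}_h|||_W$ is a \emph{weaker} quantity than the full broken $H^1$-type norm, since $\|\nabla\times\mathbf{w}_h\|_{0,K}\le\|\nabla\mathbf{w}_h\|_{0,K}$ and the tangential jump is dominated by the full jump. So the identical construction from case one works verbatim: take $\mathbf{w}\in[H^1_0(\Omega)]^d$ with $\nabla\cdot\mathbf{w}=-\theta_h$ and $\|\mathbf{w}\|_1\lesssim\|\theta_h\|_0=\|\theta_h-\bar\theta_h\|_0$ (legitimate since $Q_h\subset L^2_0(\Omega)$), set $\mathbf{w}_h=\mathbf{P}_{k-1}^{\mathcal{RT}}\mathbf{w}$; the full zero trace of $\mathbf{w}$ is a fortiori compatible with the tangential condition in $\hat{\mathbf{W}}_h^0$, the face term dies because $\hat\theta_h|_{\partial\Omega}=0$, and $\|\nabla_h\times\mathbf{w}_h\|_{0,K}\le|\mathbf{w}_h|_{1,K}\lesssim|\mathbf{w}|_{1,K}$ by Lemma \ref{lemma10} with $j=1$. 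Your curl-free lifting $\mathbf{w}=\nabla\psi$, $\Delta\psi=-\theta_h$, by contrast, needs $\psi\in H^2(\Omega)$ both for the bound $\|\mathbf{w}\|_1\lesssim\|\theta_h\|_0$ \emph{and} even to define the $\mathcal{RT}$ projection and apply Lemma \ref{lemma10} (which requires $j\ge1$). On the general polygonal/polyhedral domains for which the lemma is stated, $H^2$ regularity fails on non-convex $\Omega$, so as written your proof establishes \eqref{T-inf-sup-tildebh} only under a convexity hypothesis; the deferred ``Helmholtz-decomposition argument'' is left as a gesture, not an argument. The gap is repairable in one line — drop the curl-free detour and reuse the case-one lifting — but as submitted, the ``heart of the argument'' is both unnecessary and the sole source of an unproved restriction.
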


\subsection{Existence and uniqueness results}

We introduce the following two spaces:
\begin{align*}
\mathbf{\bar{V}}_h&:=\{\mathcal{V}_h\in[\mathbf{V}_h\times\hat{\mathbf{V}}_h^0]: \ b_{1h}(\mathcal{V}_h,\mathcal{Q}_h)=0,\forall \mathcal{Q}_h\in [Q_h\times\hat{Q}_h^0]    \}\\
&= \{\mathcal{V}_h\in[\mathbf{V}_h\times\hat{\mathbf{V}}_h^0]: \  \mathbf{v}_h\in \mathbf{H}({\rm div},\Omega), \nabla\cdot\mathbf{v}_h=0    \},
\\
\mathbf{\bar{W}}_h&:=\{\mathcal{W}_h\in[\mathbf{V}_h\times\hat{\mathbf{W}}_h^0]: \ b_{2h}(\mathcal{W}_h,\mathcal{\theta}_h)=0,\forall \mathcal{\theta}_h\in [Q_h\times\hat{R}_h^0]    \}\\
&=\{\mathcal{W}_h\in[\mathbf{V}_h\times\hat{\mathbf{W}}_h^0]: \ \mathbf{w}_{h}\in \mathbf{H}({\rm div},\Omega), \nabla\cdot\mathbf{w}_{h}=0  \}.
\end{align*}
Thus, the solution $(\bm{L}_h,\mathcal{U}_h,\bm{N}_h,\mathcal{B}_h,\bm{A}_h,\mathcal{T}_h)\in\mathbf{D}_h\times[\mathbf{V}_h\times\hat{\mathbf{V}}_h^0]\times\mathbf{C}_h\times[\mathbf{V}_h\times\hat{\mathbf{W}}_h^0]\times\mathbf{S}_h\times[Z_h\times\hat{Z}_h^0]$ of the scheme  (\ref{Tscheme0101*}) also solves the following discretization problem: find
$(\bm{L}_h,\mathcal{U}_h,\bm{N}_h,\mathcal{B}_h,\bm{A}_h,\mathcal{T}_h)\in\mathbf{D}_h\times\mathbf{\bar{V}}_h\times\mathbf{C}_h\times\mathbf{\bar{W}}_h\times\mathbf{S}_h\times[Z_h\times\hat{Z}_h^0]$
such that
\begin{small}
\begin{subequations}\label{3c1}
\begin{align}
\bm{L}_h-\frac{1}{H_a^2}\mathcal{G}_h\mathcal{U}_h&	=0,
\label{3c1-a}\\
\frac{1}{H_a^2}(\mathcal{G}_h\mathcal{U}_h,\mathcal{G}_h\mathcal{V}_h)
+s_{1h}(\mathcal{U}_h,\mathcal{V}_h)
+c_{1h}(\mathcal{U}_h;\mathcal{U}_h,\mathcal{V}_h)
+c_{2h}(\mathcal{V}_h;\mathcal{B}_h,\mathcal{B}_h)
&=(\mathbf{f},\mathbf{v}_h)-G_{3h}(T_{h},\mathbf{v}_{h}), \label{3c1-b}\\
\bm{N}_h-\frac{1}{R_m^2}\mathcal{K}_h\mathcal{B}_h&	=0,
\label{3c1-d}\\
\frac{1}{R_m^2}(\mathcal{K}_h\mathcal{B}_h,\mathcal{K}_h\mathcal{W}_h)
-c_{2h}(\mathcal{U}_h;\mathcal{B}_h,\mathcal{W}_h)
&= \frac{1}{R_m}(\mathbf{f}_2,\mathbf{w}_{h}), \label{3c1-e}\\
\bm{A}_h-	\frac{1}{P_rR_e}\mathcal{Y}_h\mathcal{T}_h&	=0,
\label{3c1-g}\\
\frac{1}{P_rR_e}(\mathcal{Y}_h\mathcal{T}_h,\mathcal{Y}_h\mathcal{Z}_h)+c_{3h}(\mathcal{U}_h;\mathcal{T}_h,\mathcal{Z}_h)
&=(f_3,z_h),\label{3c1-h}
\end{align}
\end{subequations}
\end{small}
holds for all 
$  \mathcal{V}_h\in
[\mathbf{V}_h\times\hat{\mathbf{V}}_h^0],$
$
\mathcal{W}_h\in[\mathbf{V}_h\times\hat{\mathbf{W}}_h^0],$
$\mathcal{Z}_h\in [Z_h\times\hat{Z}_h^0].$

For the discretization problems  (\ref{Tscheme0101*}) and (\ref{3c1}), we easily  have   the following equivalence result.


\begin{myLem}\label{lemma15a}

The problems (\ref{Tscheme0101*}) and (\ref{3c1}) are equivalent in the sense that (I) and
(II) hold:
\begin{itemize}
\item[(I)]  If $(\bm{L}_h,\mathcal{U}_h,\bm{N}_h,\mathcal{B}_h,\bm{A}_h,\mathcal{T}_h,\mathcal{P}_h,\mathcal{R}_h)\in\mathbf{D}_h\times[\mathbf{V}_h\times\hat{\mathbf{V}}_h^0]\times\mathbf{C}_h\times[\mathbf{V}_h\times\hat{\mathbf{W}}_h^0]\times\mathbf{S}_h\times[Z_h\times\hat{Z}_h^0]\times[Q_h\times\hat{Q}_h^0]\times[Q_h\times\hat{R}_h^0]$ is the solution to the problem (\ref{Tscheme0101*}), then $(\bm{L}_h,\mathcal{U}_h,\bm{N}_h,\mathcal{B}_h,\bm{A}_h,\mathcal{T}_h,\mathcal{P}_h,\mathcal{R}_h)$ is also the solution to the problem (\ref{3c1});

\item[(II)] If $(\bm{L}_h,\mathcal{U}_h,\bm{N}_h,\mathcal{B}_h,\bm{A}_h,\mathcal{T}_h)\in\mathbf{D}_h\times[\mathbf{V}_h\times\hat{\mathbf{V}}_h^0]\times\mathbf{C}_h\times[\mathbf{V}_h\times\hat{\mathbf{W}}_h^0]\times\mathbf{S}_h\times[Z_h\times\hat{Z}_h^0]$ is the solution to the problem (\ref{3c1}), then
$(\bm{L}_h,\mathcal{U}_h,\bm{N}_h,\mathcal{B}_h,\bm{A}_h,\mathcal{T}_h,\mathcal{P}_h,\mathcal{R}_h)$ is also the solution to the problem (\ref{Tscheme0101*}), where $(\mathcal{P}_h,\mathcal{R}_h)\in [Q_h\times\hat{Q}_h^0]\times [Q_h\times\hat{R}_h^0]$ is given by
\begin{subequations}\label{lemma15a-1}
\begin{align}
	&b_{1h}(\mathcal{V}_h,\mathcal{P}_h)\nonumber\\
	=&(\mathbf{f}_1,\mathbf{v}_h)-
	\frac{1}{H_a^2}(\mathcal{G}_h\mathcal{U}_h,\mathcal{G}_h\mathcal{V}_h)
	-s_{1h}(\mathcal{U}_h,\mathcal{V}_h)
	-c_{1h}(\mathcal{U}_h;\mathcal{U}_h,\mathcal{V}_h)-c_{2h}(\mathcal{V}_h;\mathcal{B}_h,\mathcal{B}_h)-G_{3h}(\mathcal{T}_h,\mathcal{V}_h), \nonumber\\
	&\quad\quad\quad\quad\quad\quad
	\quad\quad\quad\quad\quad\quad
	\quad\quad\quad\quad\quad\quad
	\quad\quad\quad\quad\quad\quad
	\forall\mathcal{V}_h\in[\mathbf{V}_h\times\hat{\mathbf{V}}_h^0],
	\label{lemma15a-1:sub1}\\
	&b_{2h}(\mathcal{W}_h,\mathcal{R}_h)\nonumber\\
	=&\frac{1}{R_m}(\mathbf{f}_2,\mathbf{w}_{h})
	-\frac{1}{R_m^2}(\mathcal{K}_h\mathcal{B}_h,\mathcal{K}_h\mathcal{W}_h)
	-s_{2h}(\mathcal{B}_h,\mathcal{W}_h)
	+c_{2h}(\mathcal{U}_h;\mathcal{B}_h,\mathcal{W}_h), \nonumber\\
	&\quad\quad\quad\quad\quad\quad
	\quad\quad\quad\quad\quad\quad
	\quad\quad\quad\quad\quad\quad
	\quad\quad\quad\quad\quad\quad
	\forall\mathcal{W}_h\in[\mathbf{V}_h\times\hat{\mathbf{W}}_h^0].
	\label{lemma15a-1:sub2}
\end{align}
\end{subequations}
\end{itemize}
\end{myLem}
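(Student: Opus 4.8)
The plan is to recognize Lemma~\ref{lemma15a} as the classical equivalence between a Brezzi-type saddle-point system and its restriction to the constraint (here divergence-free) subspaces, and to prove the two implications (I) and (II) separately. The only ingredient beyond routine bookkeeping is the pair of inf-sup inequalities in Lemma~\ref{lemma15}; everything else amounts to testing with the right functions and reorganizing terms.

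For direction (I), I would first read the constraint equations \eqref{Tscheme0101*-c} and \eqref{Tscheme0101*-f}: the identities $b_{1h}(\mathcal{U}_h,\mathcal{Q}_h)=0$ and $b_{2h}(\mathcal{B}_h,\mathcal{\theta}_h)=0$ for all test functions say precisely, by the definitions of $\mathbf{\bar{V}}_h$ and $\mathbf{\bar{W}}_h$ recalled just before the lemma, that $\mathcal{U}_h\in\mathbf{\bar{V}}_h$ and $\mathcal{B}_h\in\mathbf{\bar{W}}_h$. Restricting the test functions in \eqref{Tscheme0101*-b} and \eqref{Tscheme0101*-e} to $\mathbf{\bar{V}}_h$ and $\mathbf{\bar{W}}_h$ then annihilates the multiplier terms $b_{1h}(\mathcal{V}_h,\mathcal{P}_h)$ and $b_{2h}(\mathcal{W}_h,\mathcal{R}_h)$ by the very definition of these subspaces, so that \eqref{Tscheme0101*-b} collapses to \eqref{3c1-b} and \eqref{Tscheme0101*-e} to \eqref{3c1-e}. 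The equations \eqref{Tscheme0101*-a}, \eqref{Tscheme0101*-d}, \eqref{Tscheme0101*-g}, \eqref{Tscheme0101*-h} carry no multiplier and, after inserting the operators $\mathcal{G}_h,\mathcal{K}_h,\mathcal{Y}_h$, reproduce \eqref{3c1-a}, \eqref{3c1-d}, \eqref{3c1-g}, \eqref{3c1-h} verbatim. Thus (I) follows with no computation.

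For direction (II) the real work is to construct the multipliers and show they are well defined by \eqref{lemma15a-1}. Let $F_1$ denote the linear functional on $[\mathbf{V}_h\times\hat{\mathbf{V}}_h^0]$ given by the right-hand side of \eqref{lemma15a-1:sub1}. The compatibility observation is that \eqref{3c1-b}, read in particular for $\mathcal{V}_h\in\mathbf{\bar{V}}_h$, says exactly $F_1(\mathcal{V}_h)=0$ on $\mathbf{\bar{V}}_h=\ker b_{1h}$. The inf-sup inequality \eqref{T-inf-sup-bh} shows that the operator $\mathcal{P}_h\mapsto b_{1h}(\cdot,\mathcal{P}_h)$ is injective with range equal to the annihilator $(\mathbf{\bar{V}}_h)^\circ$ (a finite-dimensional closed-range argument), so there is a unique $\mathcal{P}_h\in[Q_h\times\hat{Q}_h^0]$ with $b_{1h}(\mathcal{V}_h,\mathcal{P}_h)=F_1(\mathcal{V}_h)$ for all $\mathcal{V}_h$, which is \eqref{lemma15a-1:sub1}. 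Repeating the argument with the functional $F_2$ from \eqref{lemma15a-1:sub2}, the compatibility supplied by \eqref{3c1-e}, and the second inf-sup inequality \eqref{T-inf-sup-tildebh} yields $\mathcal{R}_h$.

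It then remains to reassemble: by construction \eqref{lemma15a-1:sub1} rearranges into \eqref{Tscheme0101*-b} holding for every $\mathcal{V}_h$ in the full space, and \eqref{lemma15a-1:sub2} into \eqref{Tscheme0101*-e}; the constraints \eqref{Tscheme0101*-c} and \eqref{Tscheme0101*-f} hold because $\mathcal{U}_h\in\mathbf{\bar{V}}_h$ and $\mathcal{B}_h\in\mathbf{\bar{W}}_h$ are built into the solution space of \eqref{3c1}; and \eqref{Tscheme0101*-a}, \eqref{Tscheme0101*-d}, \eqref{Tscheme0101*-g}, \eqref{Tscheme0101*-h} transfer unchanged. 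I expect the sole genuine obstacle to be the solvability step for $\mathcal{P}_h$ and $\mathcal{R}_h$, i.e.\ correctly matching the compatibility condition $F_i\in(\ker b_{ih})^\circ$ with the inf-sup conditions of Lemma~\ref{lemma15}; the rest is organizational.
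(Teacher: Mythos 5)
Your proof is correct and coincides with the argument the paper intends: the paper states this lemma without proof (``we easily have''), and its Theorem \ref{T-main-results11*} recovers $\mathcal{P}_h$ and $\mathcal{R}_h$ from \eqref{lemma15a-1} by exactly the inf-sup/closed-range reasoning you spell out via Lemma \ref{lemma15}, with direction (I) being the same restriction of test functions to $\mathbf{\bar{V}}_h\times\mathbf{\bar{W}}_h$. Note only that you rightly read the test spaces of \eqref{3c1} as the kernel spaces $\mathbf{\bar{V}}_h,\mathbf{\bar{W}}_h$ and restored the $s_{2h}$ term missing from \eqref{3c1-e} (both consistent with \eqref{3c21} but at odds with the paper's literal statement, which are typos), since otherwise the compatibility condition $F_2=0$ on $\mathbf{\bar{W}}_h$ that your construction of $\mathcal{R}_h$ requires would fail.
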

Denote
\begin{align}
&M_{1h}:=\sup_{\mathbf{0}\neq\Phi_h,\mathcal{U}_h,\mathcal{V}_h\in \mathbf{\bar{V}}_h}
\frac{c_{2h}(\Phi_h;\mathcal{U}_h,\mathcal{V}_h)}
{|||\Phi_h|||_V|||\mathcal{U}_h|||_V|||\mathcal{V}_h|||_V},
\label{M1h}\\
&M_{2h}:=\sup_{
\substack{
\mathbf{0}\neq\mathcal{B}_h,\mathcal{W}_h\in\mathbf{\bar{W}}_h,\ \
\mathbf{0}\neq\mathcal{V}_h\in \mathbf{\bar{V}}_h}
}
\frac{c_{2h}(\mathcal{V}_h;\mathcal{B}_h,\mathcal{W}_h)}
{|||\mathcal{B}_h|||_W|||\mathcal{V}_h|||_V|||\mathcal{W}_h|||_W},
\label{M2h}\\
&M_{3h}:=\sup_{
\substack{
0\neq \mathcal{T}_h,\mathcal{Z}_h\in [Z_h\times\hat{Z}_h^0],\ \
\mathbf{0}\neq\mathcal{U}_h\in \mathbf{\bar{V}}_h}
}
\frac{c_{3h}(\mathcal{U}_h;\mathcal{T}_h,\mathcal{Z}_h)}
{|||\mathcal{U}_h|||_V|||\mathcal{T}_h|||_Z|||\mathcal{Z}_h|||_Z}.
\label{M3h}
\end{align}	
From Lemma \ref{lemma13}, it is  easy to know that   $M_{1h}$, $M_{2h}$, and $M_{3h}$ are bounded from above by a positive constant independent of the mesh size $h$.

\begin{myLem}\label{results11}

The WG scheme (\ref{3c1}) admits at least one solution $(\bm{L}_h,\mathcal{U}_h,\bm{N}_h,\mathcal{B}_h,\bm{A}_h,\mathcal{T}_h)\in\mathbf{D}_h\times[\mathbf{V}_h\times\hat{\mathbf{V}}_h^0]\times\mathbf{C}_h\times[\mathbf{V}_h\times\hat{\mathbf{W}}_h^0]\times\mathbf{S}_h\times[Z_h\times\hat{Z}_h^0]$.
In addition, there hold
\begin{align}\label{T-3c41*}
&|||\mathcal{U}_h|||_V+|||\mathcal{B}_h|||_W\leq
2\zeta^2\left(\|\mathbf{f_1}\|_{1h}+\|\mathbf{f_2}\|_{2h}+\|f_{3}\|_{3h} \right),\\
\label{T-3c42*}
&|||\mathcal{T}_h|||_Z\leq P_rR_e\|f_3\|_{3h},
\end{align}
where
\begin{align*}
&\zeta:=\max\{H_a,R_m,\frac{H_aP_rR_eG_r\mathbf{g}}{NR_e^2g}\}, \quad\quad 
\|\mathbf{f}_1\|_{1h}:=\sup_{\mathbf{0}\neq\mathcal{V}_h\in \mathbf{\bar{V}}_h}\frac{(\mathbf{f}_1,\mathbf{v}_h)}{|||\mathcal{V}_h|||_V},\\
&\|\mathbf{f}_2\|_{2h}:=\sup_{\mathbf{0}\neq\mathcal{W}_h\in \mathbf{\bar{W}}_h}\frac{1/R_m(\mathbf{f}_2,\mathbf{w}_{h})}{|||\mathcal{W}_h|||_W},\quad\quad
\|f_3\|_{3h}:=\sup_{\mathbf{0}\neq \mathcal{Z}_h\in [Z_h\times\hat{Z}_h^0]}\frac{(\mathbf{f}_3,z_h)}{|||\mathcal{Z}_h|||_Z}.
\end{align*}
\end{myLem}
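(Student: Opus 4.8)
The plan is to combine an energy-type \emph{a priori} estimate with a fixed-point argument, carried out on the divergence-free subspaces $\mathbf{\bar{V}}_h$ and $\mathbf{\bar{W}}_h$, the a priori bounds serving both as the asserted estimates \eqref{T-3c41*}--\eqref{T-3c42*} and as the radius in the existence proof. First I would treat the temperature. Taking $\mathcal{Z}_h=\mathcal{T}_h$ in \eqref{3c1-h} and using the skew-symmetry $c_{3h}(\mathcal{U}_h;\mathcal{T}_h,\mathcal{T}_h)=0$ together with $s_{3h}(\mathcal{T}_h,\mathcal{T}_h)=\frac{1}{P_rR_e}\|\tau^{1/2}(T_h-\hat T_h)\|_{0,\partial\mathcal{T}_h}^2$, the left-hand side collapses to $\frac{1}{P_rR_e}|||\mathcal{T}_h|||_Z^2$, so that $\frac{1}{P_rR_e}|||\mathcal{T}_h|||_Z^2=(f_3,T_h)\le\|f_3\|_{3h}\,|||\mathcal{T}_h|||_Z$ gives \eqref{T-3c42*} immediately. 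The decisive point is that this bound is \emph{independent} of $\mathcal{U}_h$ and $\mathcal{B}_h$.

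Next I would take $\mathcal{V}_h=\mathcal{U}_h$ in \eqref{3c1-b} and $\mathcal{W}_h=\mathcal{B}_h$ in \eqref{3c1-e} and add the two identities. Using $c_{1h}(\mathcal{U}_h;\mathcal{U}_h,\mathcal{U}_h)=0$ and, crucially, the exact cancellation of the coupling term $c_{2h}(\mathcal{U}_h;\mathcal{B}_h,\mathcal{B}_h)$ between the velocity and magnetic equations, the left-hand side reduces to $\frac{1}{H_a^2}|||\mathcal{U}_h|||_V^2+\frac{1}{R_m^2}|||\mathcal{B}_h|||_W^2$. The only indefinite contribution on the right is the buoyancy term $G_{3h}(T_h,\mathbf{u}_h)$, which I would control via the Cauchy--Schwarz inequality and the HDG Sobolev embedding of Lemma \ref{lemma7} by $\tfrac{G_r}{NR_e^2}|||\mathcal{T}_h|||_Z\,|||\mathcal{U}_h|||_V$, and then \emph{linearize} by inserting the bound $|||\mathcal{T}_h|||_Z\le P_rR_e\|f_3\|_{3h}$ already obtained. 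Collecting the forcing into $\|\mathbf{f}_1\|_{1h},\|\mathbf{f}_2\|_{2h},\|f_3\|_{3h}$, applying Cauchy--Schwarz on the right, and using $\min\{H_a^{-2},R_m^{-2}\}\ge\zeta^{-2}$ (whence the factor $\zeta^2$) yields \eqref{T-3c41*}; the precise constant $2\zeta^2$ requires only careful bookkeeping of the definition of $\zeta$.

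For existence I would invoke the standard corollary of Brouwer's theorem: a continuous map $\Phi$ on a finite-dimensional inner-product space satisfying $(\Phi(x),x)\ge 0$ on the sphere $\|x\|=\rho$ has a zero inside that ball. Since \eqref{3c1-h} is linear and coercive in $\mathcal{T}_h$ for each fixed $\mathcal{U}_h$, it defines a continuous solution operator $\mathcal{U}_h\mapsto\mathcal{T}_h(\mathcal{U}_h)$ obeying \eqref{T-3c42*} uniformly; substituting it into \eqref{3c1-b} reduces the whole system to a nonlinear problem for $(\mathcal{U}_h,\mathcal{B}_h)\in\mathbf{\bar{V}}_h\times\mathbf{\bar{W}}_h$ alone. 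I would then define $\Phi$ on $\mathbf{\bar{V}}_h\times\mathbf{\bar{W}}_h$, equipped with the inner products inducing $|||\cdot|||_V$ and $|||\cdot|||_W$, by Riesz representation of the residuals of \eqref{3c1-b} and \eqref{3c1-e}; continuity follows from linearity of $\mathcal{G}_h,\mathcal{K}_h$, the polynomial form of $c_{1h},c_{2h}$, and continuity of the temperature operator. Evaluating $(\Phi(\mathcal{U}_h,\mathcal{B}_h),(\mathcal{U}_h,\mathcal{B}_h))$ reproduces exactly the energy identity above, so it is strictly positive once $|||\mathcal{U}_h|||_V+|||\mathcal{B}_h|||_W$ exceeds the right-hand side of \eqref{T-3c41*}; choosing $\rho$ accordingly produces a zero of $\Phi$, and pairing it with $\mathcal{T}_h(\mathcal{U}_h)$ and the reconstructed $\bm{L}_h,\bm{N}_h,\bm{A}_h$ gives a solution of \eqref{3c1}.

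The main obstacle is precisely the buoyancy coupling $G_{3h}$. Inside a single Brouwer argument on the full triple $(\mathcal{U}_h,\mathcal{B}_h,\mathcal{T}_h)$ its cross term $|||\mathcal{T}_h|||_Z\,|||\mathcal{U}_h|||_V$ is quadratic, and completing the square (or Young's inequality) would force an unwanted smallness condition on $G_r$. The remedy, which is the key structural observation, is that the thermal equation decouples and is a priori bounded independently of $(\mathcal{U}_h,\mathcal{B}_h)$, so the coupling degenerates to a bounded, effectively \emph{linear} forcing on the momentum equation; this is what keeps \eqref{T-3c41*} free of any restriction on the data. The second ingredient one must not overlook is the cancellation of $c_{2h}$ upon adding the velocity and magnetic identities, which is what makes the hydromagnetic part of the energy coercive.
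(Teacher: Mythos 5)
Your proof is correct, and its a priori part is essentially the paper's: you first bound the temperature by testing \eqref{3c1-h} with $\mathcal{T}_h$ and using $c_{3h}(\mathcal{U}_h;\mathcal{T}_h,\mathcal{T}_h)=0$, obtaining \eqref{T-3c42*} uniformly in $(\mathcal{U}_h,\mathcal{B}_h)$; you then reduce to the problem \eqref{3c21} for $(\mathcal{U}_h,\mathcal{B}_h)\in\mathbf{\bar{V}}_h\times\mathbf{\bar{W}}_h$ via the solution operator $\mathcal{U}_h\mapsto\mathcal{T}_h(\mathcal{U}_h)$ and test with the solution itself, exploiting $c_{1h}(\mathcal{U}_h;\mathcal{U}_h,\mathcal{U}_h)=0$ and the cancellation of the two $c_{2h}$ contributions --- exactly the paper's derivation of \eqref{T-3c41*}, including the key structural point you emphasize, namely that the decoupled temperature bound linearizes the buoyancy term and keeps \eqref{T-3c41*} free of smallness conditions. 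Where you genuinely diverge is the existence mechanism. The paper defines a Picard-type map $\mathbb{A}$ through the linear solve \eqref{3c51} and invokes the Schaefer fixed point theorem, which obliges it to verify continuity and compactness of $\mathbb{A}$ (done via a Lipschitz-type estimate and an appeal to Arzel\'a--Ascoli that is superfluous in finite dimensions, where continuous maps are compact on bounded sets) together with boundedness of the set of $\lambda$-fixed points, which repeats the energy argument. You instead apply the standard finite-dimensional corollary of Brouwer's theorem to the Riesz representative $\Phi$ of the residual of \eqref{3c21}: the pairing $(\Phi(x),x)$ reproduces the energy identity and is positive on a sphere of radius just above the bound in \eqref{T-3c41*}, so $\Phi$ vanishes inside the ball. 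This is the more economical route here, since it needs only continuity of $\Phi$ (clear: the forms are polynomial and the temperature map is uniformly coercive, hence continuous) and sidesteps the compactness discussion entirely. What the paper's heavier machinery buys is the quantitative Lipschitz estimate on $\mathbb{A}$ and on $\mathcal{U}_h\mapsto\mathcal{T}_h(\mathcal{U}_h)$ (see \eqref{T-X-UB}), which is reused almost verbatim to prove uniqueness under the smallness condition \eqref{3c81} in the subsequent lemma; with your route that estimate would still have to be derived there. One shared caveat: like the paper, your constant $2\zeta^2$ in \eqref{T-3c41*} silently absorbs the discrete Poincar\'e/embedding constants hidden in bounding $G_{3h}$ and in passing from $\frac{1}{R_m^2}\|\mathcal{K}_h\mathcal{B}_h\|_0^2+s_{2h}(\mathcal{B}_h,\mathcal{B}_h)$ to $\frac{1}{R_m^2}|||\mathcal{B}_h|||_W^2$, so the exact numerical factor is a matter of convention rather than a defect of your argument.
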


\begin{proof}
First, by Lemma \ref{lemma13} it is easy to see that, for a given $\mathcal{U}_h 
\in\mathbf{\bar{V}}_h
$, the bilinear form
$\frac{1}{P_rR_e}(\mathcal{Y}_h,\cdot)+s_{3h}(\mathcal{T}_h,\cdot)+c_{3h}(\mathcal{U}_h;\mathcal{T}_h,\cdot)$
is continuous and coercive on $[Z_h\times\hat{Z}_h^0]\times [Z_h\times\hat{Z}_h^0]$. Then the Lax-Milgram theorem implies that the problem (\ref{3c1-h}) has a unique $\mathcal{T}_h=\mathcal{T}_h(\mathcal{U}_h)
\in [Z_h\times\hat{Z}_h^0]$ with the boundedness estimate  \eqref{T-3c42*}.

We easily see that the scheme (\ref{3c1}) is equivalent to  the following problem: 
find $(\mathcal{U}_h,\mathcal{B}_h)\in\mathbf{\bar{V}}_h\times\mathbf{\bar{W}}_h$ such that
\begin{align}\label{3c21}
&\frac{1}{H_a^2}(\mathcal{G}_h\mathcal{U}_h,\mathcal{G}_h\mathcal{V}_h)
+s_{1h}(\mathcal{U}_h,\mathcal{V}_h)
+\frac{1}{R_m^2}(\mathcal{K}_h\mathcal{B}_h,\mathcal{K}_h\mathcal{W}_h)
+s_{2h}(\mathcal{B}_h,\mathcal{W}_h)\nonumber\\
&+c_{1h}(\mathcal{U}_h;\mathcal{U}_h,\mathcal{V}_h)
+c_{2h}(\mathcal{V}_h;\mathcal{B}_h,\mathcal{B}_h)-
c_{2h}(\mathcal{U}_h;\mathcal{B}_h,\mathcal{W}_h)\nonumber\\
=&(\mathbf{f}_1,\mathcal{V}_h)
+\frac{1}{R_m}(\mathbf{f}_2,\mathcal{W}_h)
-G_{3h}(\mathcal{T}_h(\mathcal{U}_h),\mathcal{V}_h),
\ \ \ \ \ \ \ \forall(\mathcal{V}_h,\mathcal{W}_h)
\in\mathbf{\bar{V}}_h\times\mathbf{\bar{W}}_h.
\end{align}
Taking $\mathcal{V}_h=\mathcal{U}_h$ and $\mathcal{W}_h=\mathcal{B}_h$ in (\ref{3c21}),
by Lemma \ref{lemma13} we obtain
\begin{align*}
\frac{1}{H_a^2}|||\mathcal{U}_h|||_V^2+\frac{1}{R_m^2}|||\mathcal{B}_h|||_W^2
\leq&
-G_{3h}(\mathcal{T}_h(\mathcal{U}_h),\mathcal{U}_h)
+(\mathbf{f}_1,\mathcal{U}_h)+(\mathbf{f}_2,\mathcal{B}_h)\\
\leq&\left(\frac{P_rR_eG_r\mathbf{g}}{NR_e^2g}\|f_3\|_{3h}+\|\mathbf{f}_1\|_{1h}
\right)|||\mathcal{U}_h|||_V
+\|\mathbf{f}_2\|_{2h}|||\mathcal{B}_h|||_W,
\end{align*}
which further yields
\begin{align*}
\frac12\min\{\frac{1}{H_a^2},\frac{1}{R_m^2}\}\left(|||\mathcal{U}_h|||_V+|||\mathcal{B}_h|||_W\right)^2
\leq &\min\{\frac{1}{H_a^2},\frac{1}{R_m^2}\}\left(|||\mathcal{U}_h|||_V^2+|||\mathcal{B}_h|||_W^2\right)\nonumber\\
\leq &\frac{1}{H_a^2}|||\mathcal{U}_h|||_V^2+\frac{1}{R_m^2}|||\mathcal{B}_h|||_W^2\nonumber\\
\leq &
H_a^2(\frac{P_rR_eG_r\mathbf{g}}{NR_e^2g}\|f_3\|_{3h}+\|\mathbf{f}_1\|_{1h})^2+ R_m^2\|\mathbf{f}_2\|_{2h}^2\nonumber\\
\leq & \left(\frac{H_aP_rR_eG_r\mathbf{g}}{NR_e^2g}\|f_3\|_{3h}+H_a\|\mathbf{f}_1\|_{1h} + R_m\|\mathbf{f}_2\|_{2h}\right)^2.
\end{align*}
This indicates  the boundedness result \eqref{T-3c41*}.

To show the existence of solution  to the problem \eqref{3c21},
we define a  mapping $\mathbb{A}:\mathbf{\bar{V}}_h\times\mathbf{\bar{W}}_h\rightarrow
\mathbf{\bar{V}}_h\times\mathbf{\bar{W}}_h$ with $\mathbb{A}(\mathcal{U}_h,\mathcal{B}_h)=
(\mathbf{x}_u,\mathbf{x}_B)$, where $(\mathbf{x}_u,\mathbf{x}_B)\in\mathbf{\bar{V}}_h\times\mathbf{\bar{W}}_h$ is given by
\begin{align}\label{3c51}
&\frac{1}{H_a^2}(\mathcal{G}_h\mathbf{x}_u,\mathcal{G}_h\mathcal{V}_h)
+s_{1h}(\mathbf{x}_u,\mathcal{V}_h)
+\frac{1}{R_m^2}(\mathcal{K}_h\mathbf{x}_B,\mathcal{K}_h\mathcal{W}_h)
+s_{2h}(\mathbf{x}_B,\mathcal{W}_h)
\nonumber\\
=&(\mathbf{f}_1,\mathcal{V}_h)+\frac{1}{R_m}(\mathbf{f}_2,\mathcal{W}_h)
-G_{3h}(\mathcal{T}_h(\mathcal{U}_h),\mathcal{V}_h)
-c_{1h}(\mathcal{U}_h;\mathcal{U}_h,\mathcal{V}_h)\nonumber\\
&-c_{2h}(\mathcal{V}_h;\mathcal{B}_h,\mathcal{B}_h)
+c_{2h}(\mathcal{U}_h;\mathcal{B}_h,\mathcal{W}_h), \quad \forall (\mathcal{V}_h,\mathcal{W}_h)
\in\mathbf{\bar{V}}_h\times\mathbf{\bar{W}}_h.
\end{align}
Clearly, $(\mathcal{U}_h,\mathcal{B}_h)$ is a solution to (\ref{3c21})
if it is a fixed point of $\mathbb{A}$, i.e.  
\begin{align}\label{uniqu}
\mathbb{A}(\mathcal{U}_h,\mathcal{B}_h)=
(\mathcal{U}_h,\mathcal{B}_h).
\end{align}
In order to show the system \eqref{uniqu} has a solution,
from the    Schaefer fixed point theorem \cite[Theorem 2.11]{LeDretHerve2018NEPD} 
it suffices to prove the following two assertions:
\begin{itemize}
\item[(i)] $\mathbb{A}$ is a continuous and compact mapping;
\item[(ii)]  The set $$
\mathbf{\Theta}_h:=\{(\mathcal{V}_h,\mathcal{W}_h)\in\mathbf{\bar{V}}_h\times\mathbf{\bar{W}}_h    ;(\mathcal{V}_h,\mathcal{W}_h)=\lambda\mathbb{A}(\mathcal{V}_h,\mathcal{W}_h)
\text{ for some } 0< \lambda\leq 1 \}$$ is bounded.
\end{itemize}

To show (i), let $\mathcal{U}_{h1},\mathcal{U}_{h2}\in\mathbf{\bar{V}}_h$ and $\mathcal{B}_{h1},\mathcal{B}_{h2}\in\mathbf{\bar{W}}_h$ be such that
$\mathbb{A}(\mathcal{U}_{h1},\mathcal{B}_{h1})=(\mathbf{x}_{1u},\mathbf{x}_{1B})$ and
$\mathbb{A}(\mathcal{U}_{h2},\mathcal{B}_{h2})=(\mathbf{x}_{2u},\mathbf{x}_{2B})$, then we have
\begin{align}
&\frac{1}{H_a^2}(\mathcal{G}_h\mathbf{x}_{1u},\mathcal{G}_h\mathcal{V}_h)
+s_{1h}(\mathbf{x}_{1u},\mathcal{V}_h)
+\frac{1}{R_m^2}(\mathcal{K}_h\mathbf{x}_{1B},\mathcal{K}_h\mathcal{W}_h)
+s_{2h}(\mathbf{x}_{1B},\mathcal{W}_h)
\nonumber\\
&+c_{1h}(\mathcal{U}_{h1};\mathcal{U}_{h1},\mathcal{V}_h)
+c_{2h}(\mathcal{V}_h;\mathcal{B}_{h1},\mathcal{B}_{h1})-
c_{2h}(\mathcal{U}_{h1};\mathcal{B}_{h1},\mathcal{W}_h)\nonumber\\
=&(\mathbf{f}_1,\mathcal{V}_h)
+\frac{1}{R_m}(\mathbf{f}_2,\mathcal{W}_h)
-G_{3h}(\mathcal{T}_h(\mathcal{U}_{h1}),\mathcal{V}_h),\label{3c61}\\
\label{3c71}
&\frac{1}{H_a^2}(\mathcal{G}_h\mathbf{x}_{2u},\mathcal{G}_h\mathcal{V}_h)
+s_{1h}(\mathbf{x}_{2u},\mathcal{V}_h)
+\frac{1}{R_m^2}(\mathcal{K}_h\mathbf{x}_{2B},\mathcal{K}_h\mathcal{W}_h)
+s_{2h}(\mathbf{x}_{2B},\mathcal{W}_h)
\nonumber\\
&+c_{1h}(\mathcal{U}_{h2};\mathcal{U}_{h2},\mathcal{V}_h)
+c_{2h}(\mathcal{V}_h;\mathcal{B}_{h2},\mathcal{B}_{h2})-
c_{2h}(\mathcal{U}_{h2};\mathcal{B}_{h2},\mathcal{W}_h)\nonumber\\
=&(\mathbf{f}_1,\mathcal{V}_h)
+\frac{1}{R_m}(\mathbf{f}_2,\mathcal{W}_h)
-G_{3h}(\mathcal{T}_h(\mathcal{U}_{h2}),\mathcal{V}_h),
\end{align}
for all $(\mathcal{V}_h,\mathcal{W}_h)
\in\mathbf{\bar{V}}_h\times\mathbf{\bar{W}}_h.$
Subtracting (\ref{3c71}) from (\ref{3c61}), and taking $\mathcal{V}_h=\mathbf{x}_{1u}-\mathbf{x}_{2u}$,
$\mathcal{W}_h=\mathbf{x}_{1B}-\mathbf{x}_{2B}$,  we get
\begin{align}\label{X-UB}
&\frac{1}{H_a^2}(\mathcal{G}_h(\mathbf{x}_{1u}-\mathbf{x}_{2u}),\mathcal{G}_h(\mathbf{x}_{1u}-\mathbf{x}_{2u}))
+s_{1h}(\mathbf{x}_{1u}-\mathbf{x}_{2u},\mathbf{x}_{1u}-\mathbf{x}_{2u})\nonumber\\
&+\frac{1}{R_m^2}(\mathcal{K}_h(\mathbf{x}_{1B}-\mathbf{x}_{2B}),\mathcal{K}_h(\mathbf{x}_{1B}-\mathbf{x}_{2B}))
+s_{2h}(\mathbf{x}_{1B}-\mathbf{x}_{2B},\mathbf{x}_{1B}-\mathbf{x}_{2B})\nonumber\\
=&-c_{1h}(\mathcal{U}_{h1}-\mathcal{U}_{h2};\mathcal{U}_{h1},\mathbf{x}_{1u}-\mathbf{x}_{2u})
-c_{1h}(\mathcal{U}_{h2};\mathcal{U}_{h1}-\mathcal{U}_{h2},\mathbf{x}_{1u}-\mathbf{x}_{2u})\nonumber\\
&-c_{2h}(\mathbf{x}_{1u}-\mathbf{x}_{2u};\mathcal{B}_{h1}-\mathcal{B}_{h2},\mathcal{B}_{h1})
-c_{2h}(\mathbf{x}_{1u}-\mathbf{x}_{2u};\mathcal{B}_{h2},\mathcal{B}_{h1}-\mathcal{B}_{h2})\nonumber\\
&+c_{2h}(\mathcal{U}_{h1};\mathcal{B}_{h1}-\mathcal{B}_{h2},\mathbf{x}_{1B}-\mathbf{x}_{2B})
+c_{2h}(\mathcal{U}_{h1}-\mathcal{U}_{h2};\mathcal{B}_{h2},\mathbf{x}_{1B}-\mathbf{x}_{2B})\nonumber\\
&-G_{3h}(\mathcal{T}_h(\mathcal{U}_{h1})-\mathcal{T}_h(\mathcal{U}_{h2})
,\mathbf{x}_{1u}-\mathbf{x}_{2u}).
\end{align}
Substitute $T_{h1}=\mathcal{T}_h(\mathcal{U}_{h1})$ and $T_{h2}=\mathcal{T}_h(\mathcal{U}_{h2})$  into (\ref{3c1-h}), respectively,
and then subtract the first resulting equation from the second one, we can obtain
\begin{align}\label{Th-bound}
&\frac{1}{P_rR_e}(\mathcal{Y}_h(\mathcal{T}_h(\mathcal{U}_{h1})-
\mathcal{T}_h(\mathcal{U}_{h2})),\mathcal{Y}_h\mathcal{Z}_h)+
s_{3h}(\mathcal{T}_h(\mathcal{U}_{h1})-
\mathcal{T}_h(\mathcal{U}_{h2}),\mathcal{Z}_h)\nonumber\\
=&-c_{3h}(\mathcal{U}_{h1}-\mathcal{U}_{h2};\mathcal{T}_h(\mathcal{U}_{h1}),\mathcal{Z}_h)
-c_{3h}(\mathcal{U}_{h2};\mathcal{T}_h(\mathcal{U}_{h1})
-\mathcal{T}_h(\mathcal{U}_{h2}),\mathcal{Z}_h), \ \ \ \forall z\in [Z_h\times\hat{Z}_h^0].
\end{align}
Taking $\mathcal{Z}_h=\mathcal{T}_h(\mathcal{U}_{h1})-
\mathcal{T}_h(\mathcal{U}_{h2})$ in (\ref{Th-bound}) and using Lemma \ref{lemma13} and  (\ref{T-3c42*}), we   get
\begin{align}\label{T-X-UB}
\frac{1}{P_rR_e}|||\mathcal{T}_h(\mathcal{U}_{h1})-
\mathcal{T}_h(\mathcal{U}_{h2})|||_Z
&\lesssim|||\mathcal{U}_{h1}-\mathcal{U}_{h2}|||_V
|||\mathcal{T}_h(\mathcal{U}_{h1})|||_Z\nonumber\\
&\leq M_{3h}P_rR_e\|f_3\|_{3h}|||\mathcal{U}_{h1}-\mathcal{U}_{h2}|||_V.
\end{align}
which, together with Lemma \ref{lemma13} and  (\ref{X-UB}), implies
\begin{align*}
&\frac{1}{H_a^2}|||\mathbf{x}_{1u}-\mathbf{x}_{2u}|||_V^2+
\frac{1}{R_m^2}|||\mathbf{x}_{1B}-\mathbf{x}_{2B}|||_W^2\\
\leq&M_{1h}(|||\mathcal{U}_{h1}|||_V+|||\mathcal{U}_{h2}|||_V)
|||\mathcal{U}_{h1}-\mathcal{U}_{h2}|||_V|||\mathbf{x}_{1u}-\mathbf{x}_{2u}|||_V\\
&+M_{2h}(|||\mathcal{B}_{h1}|||_W+|||\mathcal{B}_{h2}|||_W)
|||\mathbf{x}_{1u}-\mathbf{x}_{2u}|||_V|||\mathcal{B}_{h1}-\mathcal{B}_{h2}|||_W\\
&+M_{2h}|||\mathbf{x}_{1B}-\mathbf{x}_{2B}|||_W
(|||\mathcal{U}_{h1}|||_W|||\mathcal{B}_{h1}-\mathcal{B}_{h2}|||_W
+|||\mathcal{B}_{h2}|||_W|||\mathcal{U}_{h1}-\mathcal{U}_{h2}|||_V)\\
&+M_{3h}P_r^2R_e^2\frac{G_r\mathbf{g}}{NR_e^2g}\|f_3\|_{3h}|||\mathcal{U}_{h1}-\mathcal{U}_{h2}|||_V
|||\mathbf{x}_{1u}-\mathbf{x}_{2u}|||_V.
\end{align*}
This estimate plus (\ref{T-3c41*}) yields
\begin{align*}
&|||\mathbf{x}_{1u}-\mathbf{x}_{2u}|||_V+
|||\mathbf{x}_{1B}-\mathbf{x}_{2B}|||_W\nonumber\\
\leq&
2\zeta \left[
H_aM_{1h}\left(|||\mathcal{U}_{h1}|||_V+|||\mathcal{U}_{h2}|||_V\right)
|||\mathcal{U}_{h1}-\mathcal{U}_{h2}|||_V
+H_aM_{2h}(|||\mathcal{B}_{h1}|||_W+|||\mathcal{B}_{h2}|||_W)
|||\mathcal{B}_{h1}-\mathcal{B}_{h2}|||_W\right.\nonumber\\
&\left.+R_mM_{2h}(|||\mathcal{U}_{h1}|||_W|||\mathcal{U}_{h1}-\mathcal{U}_{h2}|||_W
+|||\mathcal{B}_{h2}|||_W|||\mathcal{U}_{h1}-\mathcal{U}_{h2}|||_V) \right]
+2\zeta M_{3h}H_aP_r^2R_e^2\frac{G_r\mathbf{g}}{NR_e^2g}\|f_3\|_{3h}|||\mathcal{U}_{h1}-\mathcal{U}_{h2}|||_V\nonumber\\
\leq&
4\zeta^3\left(\|\mathbf{f_1}\|_{1h}+\|\mathbf{f_2}\|_{2h}+\|f_{3}\|_{3h}\right)
\left[
2H_aM_{1h}
|||\mathcal{U}_{h1}-\mathcal{U}_{h2}|||_V
+2H_aM_{2h}
|||\mathcal{B}_{h1}-\mathcal{B}_{h2}|||_W\right.\nonumber\\
&\left.+R_mM_{2h}|||\mathcal{B}_{h1}-\mathcal{B}_{h2}|||_W
+R_mM_{2h}|||\mathcal{U}_{h1}-\mathcal{U}_{h2}|||_V\right]\nonumber\\
&+ 2\zeta M_{3h}H_aP_r^2R_e^2\frac{G_r\mathbf{g}}{NR_e^2g}\|f_3\|_{3h}|||\mathcal{U}_{h1}-\mathcal{U}_{h2}|||_V\nonumber\\
\leq&
\left(4\zeta^3(\|\mathbf{f_1}\|_{1h}+\|\mathbf{f_2}\|_{2h}+\|f_{3}\|_{3h})
(2H_aM_{1h}+R_mM_{2h})+2\zeta M_{3h}H_aP_r^2R_e^2\frac{G_r\mathbf{g}}{NR_e^2g}\|f_3\|_{3h}\right)
|||\mathcal{U}_{h1}-\mathcal{U}_{h2}|||_V\nonumber\\
&+4\zeta^3(\|\mathbf{f_1}\|_{1h}+\|\mathbf{f_2}\|_{2h}+\|f_{3}\|_{3h})
(2H_aM_{2h}+R_mM_{2h})
|||\mathcal{B}_{h1}-\mathcal{B}_{h2}|||_W\nonumber\\
\leq&
\left(4\zeta^3(\|\mathbf{f_1}\|_{1h}+\|\mathbf{f_2}\|_{2h}+\|f_{3}\|_{3h})
(2H_aM_{1h}+R_mM_{2h})+2\zeta  M_{3h}H_aP_r^2R_e^2\frac{G_r\mathbf{g}}{NR_e^2g}\|f_3\|_{3h}\right)\nonumber\\
&\quad \times (|||\mathcal{U}_{h1}-\mathcal{U}_{h2}|||_V+|||\mathcal{B}_{h1}-\mathcal{B}_{h2}|||_W)\nonumber\\
\leq&\max\{M_{1h},M_{2h},M_{3h}\}
\left(12\zeta^4(\|\mathbf{f_1}\|_{1h}+\|\mathbf{f_2}\|_{2h}+\|f_{3}\|_{3h})
+2\zeta^2 P_rR_e\|f_3\|_{3h}\right)\nonumber\\
&
\quad \times (|||\mathcal{U}_{h1}-\mathcal{U}_{h2}|||_V+|||\mathcal{B}_{h1}-\mathcal{B}_{h2}|||_W).
\end{align*}
This   implies that $\mathbb{A}$ is equicontinuous and uniformly bounded,
since
$$
\mathbb{A}(\mathcal{U}_{h1},\mathcal{B}_{h1})-
\mathbb{A}(\mathcal{U}_{h2},\mathcal{B}_{h2}) =
(\mathbf{x}_{1u}-\mathbf{x}_{2u},\mathbf{x}_{1B}-\mathbf{x}_{2B}).$$
Hence, $\mathbb{A}$ is compact by the
Arzel\'{a}-Ascoli theorem \cite{B2010}, and (i) holds.

The thing left is to prove (ii). 
For any $(\mathbf{v}_u,\mathbf{v}_B)
\in\mathbf{\Theta}_h$, using (\ref{3c51}) we have
\begin{align*}
&\lambda^{-1}\left(
\frac{1}{H_a^2}(\mathcal{G}_h\mathbf{\tilde{x}}_{u},\mathcal{G}_h\mathcal{V}_h)
+s_{1h}(\mathbf{\tilde{x}}_{u},\mathcal{V}_h)
+\frac{1}{R_m^2}(\mathcal{K}_h\mathbf{\tilde{x}}_{B},\mathcal{K}_h\mathcal{W}_h)
+s_{2h}(\mathbf{\tilde{x}}_{B},\mathcal{W}_h)\right)\\
&
+c_{1h}(\mathbf{\tilde{x}}_{u};\mathbf{\tilde{x}}_{u},\mathcal{V}_h)
+c_{2h}(\mathcal{V}_h;\mathbf{\tilde{x}}_{B},\mathbf{\tilde{x}}_{B})-
c_{2h}(\mathbf{\tilde{x}}_{u};\mathbf{\tilde{x}}_{B},\mathcal{W}_h)\nonumber\\
=&(\mathbf{f}_1,\mathcal{V}_h)
+\frac{1}{R_m}(\mathbf{f}_2,\mathcal{W}_h)
-G_{3h}(\mathcal{T}_h(\mathbf{\tilde{x}}_{u}),\mathcal{V}_h),
\ \ \ \
\forall(\mathcal{V}_h,\mathcal{W}_h)
\in\mathbf{\bar{V}}_h\times\mathbf{\bar{W}}_h.
\end{align*}
Similar to \eqref{T-3c41*}, there holds
\begin{align*}
|||\mathbf{\tilde{x}}_{u}|||_V+|||\mathbf{\tilde{x}}_{B}|||_W
\leq2\zeta\lambda\left(\|\mathbf{f_1}\|_{1h}+\|\mathbf{f_2}\|_{2h}+\|f_{3}\|_{3h} \right).
\end{align*}
i.e. (ii) holds. 

As a result,  $(\bm{L}_h=\frac{1}{H_a^2}\mathcal{G}_h\mathcal{U}_h,\mathcal{U}_h,\bm{N}_h=\frac{1}{R_m}\mathcal{K}_h\mathcal{B}_h,\mathcal{B}_h,\bm{A}_h=\frac{1}{P_rR_e}\mathcal{Y}_h\mathcal{T}_h,\mathcal{T}_h)\in
\mathbf{D}_h\times[\mathbf{V}_h\times\hat{\mathbf{V}}_h^0]\times
\mathbf{C}_h\times[\mathbf{V}_h\times\hat{\mathbf{W}}_h^0]\times
\mathbf{S}_h\times[Z_h\times\hat{Z}_h^0]$ is a solution to the scheme (\ref{3c1}).
This completes the proof.

\end{proof}

\begin{myLem}\label{results211}
Under the smallness condition  
\begin{align}\label{3c81}
\max\{M_{1h},M_{2h},M_{3h}\}
\left(12\zeta^4\|\mathbf{f_1}\|_{1h}+12\zeta^4\|\mathbf{f_2}\|_{2h}+(12\zeta^4H_a+2\zeta^2P_rR_e)
\|f_3\|_{3h}\right)<1,
\end{align}
the problem (\ref{3c1}) admits a unique solution $(\bm{L}_h,\mathcal{U}_h,\bm{N}_h,\mathcal{B}_h,\bm{A}_h,\mathcal{T}_h)\in\mathbf{D}_h\times[\mathbf{V}_h\times\hat{\mathbf{V}}_h^0]\times\mathbf{C}_h\times[\mathbf{V}_h\times\hat{\mathbf{W}}_h^0]\times\mathbf{S}_h\times[Z_h\times\hat{Z}_h^0]$.
\end{myLem}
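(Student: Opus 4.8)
The plan is to establish uniqueness by a self-bounding (contraction) argument at the level of the condensed velocity--magnetic system, exploiting the continuity computation already performed in the proof of Lemma \ref{results11}. Suppose $(\bm{L}_h^{(i)},\mathcal{U}_{hi},\bm{N}_h^{(i)},\mathcal{B}_{hi},\bm{A}_h^{(i)},\mathcal{T}_{hi})$, $i=1,2$, are two solutions of (\ref{3c1}). By the reduction in Lemma \ref{results11}, each solution is equivalent to a pair $(\mathcal{U}_{hi},\mathcal{B}_{hi})\in\mathbf{\bar{V}}_h\times\mathbf{\bar{W}}_h$ solving the condensed problem (\ref{3c21}), with the temperature given by $\mathcal{T}_{hi}=\mathcal{T}_h(\mathcal{U}_{hi})$ through (\ref{3c1-h}) and the auxiliary unknowns $\bm{L}_h^{(i)},\bm{N}_h^{(i)},\bm{A}_h^{(i)}$ recovered from (\ref{3c1-a}), (\ref{3c1-d}), (\ref{3c1-g}). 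Hence it suffices to prove $\mathcal{U}_{h1}=\mathcal{U}_{h2}$ and $\mathcal{B}_{h1}=\mathcal{B}_{h2}$; the equality of the temperatures then follows from the uniqueness clause of the Lax--Milgram theorem applied to $\mathcal{T}_h(\cdot)$, and the equality of $\bm{L}_h,\bm{N}_h,\bm{A}_h$ follows from their defining relations.

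First I would write (\ref{3c21}) for each solution, subtract the two identities, and test with $\mathcal{V}_h=\mathcal{U}_{h1}-\mathcal{U}_{h2}$ and $\mathcal{W}_h=\mathcal{B}_{h1}-\mathcal{B}_{h2}$. This is exactly the computation leading to (\ref{X-UB}) in Lemma \ref{results11}, with the crucial difference that now the ``output'' fields $\mathbf{x}_{iu},\mathbf{x}_{iB}$ of the map $\mathbb{A}$ coincide with the inputs $\mathcal{U}_{hi},\mathcal{B}_{hi}$, since each solution is a genuine fixed point of $\mathbb{A}$. Splitting the nonlinear terms $c_{1h}$, $c_{2h}$ and the buoyancy term $G_{3h}$ into frozen-coefficient differences and invoking the trilinear stability bounds of Lemma \ref{lemma13} together with the definitions (\ref{M1h})--(\ref{M3h}) of $M_{1h},M_{2h},M_{3h}$, each right-hand contribution is bounded by a product of an $M_{ih}$-factor, a solution-size factor, and the combined norm $|||\mathcal{U}_{h1}-\mathcal{U}_{h2}|||_V+|||\mathcal{B}_{h1}-\mathcal{B}_{h2}|||_W$ of the difference.

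The temperature coupling is handled through the auxiliary estimate (\ref{T-X-UB}): subtracting the two copies of (\ref{3c1-h}), testing with $\mathcal{T}_h(\mathcal{U}_{h1})-\mathcal{T}_h(\mathcal{U}_{h2})$, and using the a priori bound (\ref{T-3c42*}), one controls $|||\mathcal{T}_h(\mathcal{U}_{h1})-\mathcal{T}_h(\mathcal{U}_{h2})|||_Z$ by a multiple of $|||\mathcal{U}_{h1}-\mathcal{U}_{h2}|||_V$, which then feeds the $G_{3h}$ term. Inserting the solution-size bound (\ref{T-3c41*}) for $|||\mathcal{U}_{hi}|||_V+|||\mathcal{B}_{hi}|||_W$ eliminates all solution-dependent factors in favour of the data norms $\|\mathbf{f}_1\|_{1h},\|\mathbf{f}_2\|_{2h},\|f_3\|_{3h}$ and powers of $\zeta$. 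Collecting terms yields a self-bounding inequality of the form
\begin{align*}
|||\mathcal{U}_{h1}-\mathcal{U}_{h2}|||_V+|||\mathcal{B}_{h1}-\mathcal{B}_{h2}|||_W
\le \mathcal{L}\bigl(|||\mathcal{U}_{h1}-\mathcal{U}_{h2}|||_V+|||\mathcal{B}_{h1}-\mathcal{B}_{h2}|||_W\bigr),
\end{align*}
where $\mathcal{L}$ is precisely the quantity on the left of the smallness condition (\ref{3c81}). Since (\ref{3c81}) forces $\mathcal{L}<1$, the difference must vanish, giving $\mathcal{U}_{h1}=\mathcal{U}_{h2}$, $\mathcal{B}_{h1}=\mathcal{B}_{h2}$, and hence, as noted above, equality of the temperatures and of all auxiliary variables.

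I expect the main obstacle to be bookkeeping rather than conceptual: the delicate point is to distribute the mixed products arising from $c_{1h}$, $c_{2h}$, $G_{3h}$ and the temperature coupling so that, after substituting (\ref{T-3c41*}) and (\ref{T-X-UB}), the aggregate constant does not exceed the threshold in (\ref{3c81}). In particular I would pay close attention to the buoyancy pathway, where the temperature Lipschitz factor from (\ref{T-X-UB}) carries extra powers of $\zeta$ and of $P_rR_e$; matching the coefficient of $\|f_3\|_{3h}$ with the prescribed $\bigl(12\zeta^4H_a+2\zeta^2P_rR_e\bigr)$ is the step demanding the most care.
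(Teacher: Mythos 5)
Your proposal is correct and follows essentially the same route as the paper's proof: subtract the two copies of the condensed problem (\ref{3c21}) tested with $\mathcal{V}_h=\mathcal{U}_{h1}-\mathcal{U}_{h2}$, $\mathcal{W}_h=\mathcal{B}_{h1}-\mathcal{B}_{h2}$, bound the frozen-coefficient splittings of $c_{1h}$, $c_{2h}$ and the buoyancy term via Lemma \ref{lemma13} and the temperature Lipschitz estimate (\ref{T-X-UB}), insert the a priori bound (\ref{T-3c41*}), and conclude from the smallness condition (\ref{3c81}) that the difference vanishes. Your closing step, recovering uniqueness of $\mathcal{T}_h$ from the Lax--Milgram uniqueness of the map $\mathcal{U}_h\mapsto\mathcal{T}_h(\mathcal{U}_h)$ and of $\bm{L}_h$, $\bm{N}_h$, $\bm{A}_h$ from their defining relations, is also exactly how the paper completes the argument.
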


\begin{proof}
Since 
$(\bm{L}_h=\frac{1}{H_a^2}\mathcal{G}_h\mathbf{u}_{h},\bm{N}_h=\frac{1}{R_m}\mathcal{K}_h\mathbf{B}_{h},\bm{A}_h=\frac{1}{P_rR_e}\mathcal{Y}_h\mathcal{T}_h)\in
\mathbf{D}_h\times
\mathbf{C}_h\times
\mathbf{S}_h$, we only need to show the uniqueness of $\mathcal{U}_h$, $\mathcal{B}_h$ and $\mathcal{T}_h$.

Let $(\mathcal{U}_{h1},\mathcal{B}_{h1})$,
$(\mathcal{U}_{h2},\mathcal{B}_{h2})\in\mathbf{\bar{V}}_h\times\mathbf{\bar{W}}_h$ be two solutions to problem (\ref{3c21}), then
for any $(\mathcal{V}_{h},\mathcal{W}_{h})\in\mathbf{\bar{V}}_h\times\mathbf{\bar{W}}_h$ we have 
\begin{align*}
&\frac{1}{H_a^2}(\mathcal{G}_h\mathcal{U}_{h1},\mathcal{G}_h\mathcal{V}_h)
+s_{1h}(\mathcal{U}_{h1},\mathcal{V}_h)
+\frac{1}{R_m^2}(\mathcal{K}_h\mathcal{B}_{h1},\mathcal{K}_h\mathcal{W}_h)
+s_{2h}(\mathcal{B}_{h1},\mathcal{W}_h)\\
&+c_{1h}(\mathcal{U}_{h1};\mathcal{U}_{h1},\mathcal{V}_h)
+c_{2h}(\mathcal{V}_h;\mathcal{B}_{h1},\mathcal{B}_{h1})-
c_{2h}(\mathcal{U}_{h1};\mathcal{B}_{h1},\mathcal{W}_h)\nonumber\\
=&(\mathbf{f}_1,\mathcal{V}_h)
+\frac{1}{R_m}(\mathbf{f}_2,\mathcal{W}_h)
-G_{3h}(\mathcal{T}_h(\mathcal{U}_{h1}),\mathcal{V}_h),\\
&\frac{1}{H_a^2}(\mathcal{G}_h\mathcal{U}_{h2},\mathcal{G}_h\mathcal{V}_h)
+s_{1h}(\mathcal{U}_{h2},\mathcal{V}_h)
+\frac{1}{R_m^2}(\mathcal{K}_h\mathcal{B}_{h2},\mathcal{K}_h\mathcal{W}_h)
+s_{2h}(\mathcal{B}_{h2},\mathcal{W}_h)\\
&+c_{1h}(\mathcal{U}_{h2};\mathcal{U}_{h2},\mathcal{V}_h)
+c_{2h}(\mathcal{V}_h;\mathcal{B}_{h2},\mathcal{B}_{h2})-
c_{2h}(\mathcal{U}_{h2};\mathcal{B}_{h2},\mathcal{W}_h)\nonumber\\
=&(\mathbf{f}_1,\mathcal{V}_h)
+\frac{1}{R_m}(\mathbf{f}_2,\mathcal{W}_h)
-G_{3h}(\mathcal{T}_h(\mathcal{U}_{h2}),\mathcal{V}_h).
\end{align*}
Subtracting the above first  equation from the second one   and choosing $\mathcal{V}_h=\mathcal{U}_{h1}-\mathcal{U}_{h2}$,
$\mathcal{W}_h=\mathcal{B}_{h1}-\mathcal{B}_{h2}$, we get
\begin{align*}
&\frac{1}{H_a^2}(\mathcal{G}_h(\mathcal{U}_{h1}-\mathcal{U}_{h2}),\mathcal{G}_h(\mathcal{U}_{h1}-\mathcal{U}_{h2}))
+s_{1h}(\mathcal{U}_{h1}-\mathcal{U}_{h2},\mathcal{U}_{h1}-\mathcal{U}_{h2})\\
&+\frac{1}{R_m^2}(\mathcal{K}_h(\mathcal{B}_{h1}-\mathcal{B}_{h2}),\mathcal{K}_h(\mathcal{B}_{h1}-\mathcal{B}_{h2}))
+s_{2h}(\mathcal{B}_{h1}-\mathcal{B}_{h2},\mathcal{B}_{h1}-\mathcal{B}_{h2})\\
=&-c_{1h}(\mathcal{U}_{h1};\mathcal{U}_{h1}-\mathcal{U}_{h2},\mathcal{U}_{h1}-\mathcal{U}_{h2})
-c_{1h}(\mathcal{U}_{h1}-\mathcal{U}_{h2};\mathcal{U}_{h2},\mathcal{U}_{h1}-\mathcal{U}_{h2})\\
&-c_{2h}(\mathcal{U}_{h1}-\mathcal{U}_{h2};\mathcal{B}_{h1}-\mathcal{B}_{h2};\mathcal{B}_{h1})
-c_{2h}(\mathcal{U}_{h1}-\mathcal{U}_{h2};\mathcal{B}_{h2},\mathcal{B}_{h1}-\mathcal{B}_{h2})\\
&+c_{2h}(\mathcal{U}_{h1};\mathcal{B}_{h1}-\mathcal{B}_{h2},\mathcal{B}_{h1}-\mathcal{B}_{h2})
+c_{2h}(\mathcal{U}_{h1}-\mathcal{U}_{h2};\mathcal{B}_{h2},\mathcal{B}_{h1}-\mathcal{B}_{h2})\\
&-G_{3h}((\mathcal{T}_h(\mathcal{U}_{h1})-
(\mathcal{T}_h(\mathcal{U}_{h2}),\mathcal{V}_h),
\end{align*}
which, together with  Lemma \ref{lemma13} and (\ref{T-X-UB}), leads to
\begin{align*}
&\frac{1}{H_a^2}|||\mathcal{U}_{h1}-\mathcal{U}_{h2}|||_V^2
+\frac{1}{R_m^2}|||\mathcal{B}_{h1}-\mathcal{B}_{h2}|||_W^2\\
\leq&M_{1h}|||\mathcal{U}_{h1}-\mathcal{U}_{h2}|||_V^2|||\mathcal{U}_{h2}|||_V
+M_{2h}(|||\mathcal{B}_{h1}|||_W+|||\mathcal{B}_{h2}|||_W)
|||\mathcal{B}_{h1}-\mathcal{B}_{h2}|||_W
|||\mathcal{U}_{h1}-\mathcal{U}_{h2}|||_V\\
&+M_{2h}|||\mathcal{B}_{h1}-\mathcal{B}_{h2}|||_W^2
|||\mathcal{U}_{h1}|||_V
+M_{2h}|||\mathcal{B}_{h2}|||_W|||\mathcal{B}_{h1}-\mathcal{B}_{h2}|||_W
|||\mathcal{U}_{h1}-\mathcal{U}_{h2}|||_V\\
&+M_{3h}P_r^2R_e^2\frac{G_r\mathbf{g}}{NR_eg}\|f_3\|_{3h}|||\mathcal{U}_{h1}-\mathcal{U}_{h2}|||_V^2,
\end{align*}
This estimate plus \eqref{T-3c41*} yields
\begin{align*} 
&
|||\mathcal{U}_{h1}-\mathcal{U}_{h2}|||_V
+|||\mathcal{B}_{h1}-\mathcal{B}_{h2}|||_W\nonumber\\
\leq&
2\zeta \left( H_aM_{1h}|||\mathcal{U}_{h1}-\mathcal{U}_{h2}|||_1|||\mathcal{U}_{h2}|||_V
+H_aM_{2h} \left(|||\mathcal{B}_{h1}|||_W+|||\mathcal{B}_{h2}|||_W\right)
|||\mathcal{B}_{h1}-\mathcal{B}_{h2}|||_W\right.\nonumber\\
&
\left.+R_mM_{2h}|||\mathcal{B}_{h1}-\mathcal{B}_{h2}|||_W
|||\mathcal{U}_{h1}|||_V
+R_mM_{2h}|||\mathcal{B}_{h2}|||_W|||\mathcal{U}_{h1}-\mathcal{U}_{h2}|||_V\right.\nonumber\\
& \left.+H_aM_{3h}P_r^2R_e^2\frac{G_r\mathbf{g}}{NR_eg}\|f_3\|_{3h}|||\mathcal{U}_{h1}-\mathcal{U}_{h2}|||_V\right)\nonumber\\
\leq&
4\zeta^3\left(\|\mathbf{f_1}\|_{1h}+\|\mathbf{f_2}\|_{2h}+\|f_{3}\|_{3h} )
(H_aM_{1h}|||\mathcal{U}_{h1}-\mathcal{U}_{h2}|||_V
+2H_aM_{2h}|||\mathcal{B}_{h1}-\mathcal{B}_{h2}|||_W\right.\nonumber\\
&\left.+R_mM_{2h}|||\mathcal{B}_{h1}-\mathcal{B}_{h2}|||_W
\quad +R_mM_{2h}|||\mathcal{U}_{h1}-\mathcal{U}_{h2}|||_V\right)\nonumber\\
&+2\zeta H_aM_{3h}P_r^2R_e^2\frac{G_r\mathbf{g}}{NR_eg}\|f_3\|_{3h}|||\mathcal{U}_{h1}-\mathcal{U}_{h2}|||_V\nonumber\\
\leq&
\left (4\zeta^3(\|\mathbf{f_1}\|_{1h}+\|\mathbf{f_2}\|_{2h}+\|f_{3}\|_{3h})
(H_aM_{1h}+R_mM_{2h})+2\zeta M_{3h}H_aP_r^2R_e^2\frac{G_r\mathbf{g}}{NR_e^2g}\|f_3\|_{3h}\right)
|||\mathcal{U}_{h1}-\mathcal{U}_{h2}|||_V\nonumber\\
&+4\zeta^3(\|\mathbf{f_1}\|_{1h}+\|\mathbf{f_2}\|_{2h}+\|f_{3}\|_{3h})
(2H_aM_{2h}+R_mM_{2h})
|||\mathcal{B}_{h1}-\mathcal{B}_{h2}|||_W\nonumber\\
\leq&\max\{M_{1h},M_{2h},M_{3h}\}
\left(4\zeta^3(\|\mathbf{f_1}\|_{1h}+\|\mathbf{f_2}\|_{2h}+\|f_{3}\|_{3h})
(2H_a+R_m)+2\zeta H_aP_r^2R_e^2\frac{G_r\mathbf{g}}{NR_e^2g}\|f_3\|_{3h}\right)\nonumber\\
&\times (|||\mathcal{U}_{h1}-\mathcal{U}_{h2}|||_V+|||\mathcal{B}_{h1}-\mathcal{B}_{h2}|||_W)\nonumber\\
\leq&
\max\{M_{1h},M_{2h},M_{3h}\}
\left(12\zeta^4(\|\mathbf{f_1}\|_{1h}+\|\mathbf{f_2}\|_{2h}+\|f_{3}\|_{3h})
+2\zeta^2P_rR_e\|f_3\|_{3h}\right)\nonumber\\
&
\times (|||\mathcal{U}_{h1}-\mathcal{U}_{h2}|||_V+|||\mathcal{B}_{h1}-\mathcal{B}_{h2}|||_W),
\end{align*}
which, together with   the assumption (\ref{3c81}),   implies
$$\mathcal{U}_{h1}=\mathcal{U}_{h2}, \quad \mathcal{B}_{h1}=\mathcal{B}_{h2}. $$
This completes the proof.
\end{proof}

Finally, we obtain the following existence and uniqueness results for the HDG scheme (\ref{Tscheme0101*}).

\begin{myTheo}\label{T-main-results11*}
The scheme (\ref{Tscheme0101*}) admits at least one solution $(\bm{L}_h,\mathcal{U}_h,\bm{N}_h,\mathcal{B}_h,\bm{A}_h,\mathcal{T}_h,\mathcal{P}_h,\mathcal{R}_h)\in\mathbf{D}_h\times[\mathbf{V}_h\times\hat{\mathbf{V}}_h^0]\times\mathbf{C}_h\times[\mathbf{V}_h\times\hat{\mathbf{W}}_h^0]\times\mathbf{S}_h\times[Z_h\times\hat{Z}_h^0]\times[Q_h\times\hat{Q}_h^0]\times[Q_h\times\hat{R}_h^0]$ and, under the condition
\eqref{3c81},  admits a unique solution.
\end{myTheo}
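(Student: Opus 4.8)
The plan is to obtain Theorem \ref{T-main-results11*} as a direct synthesis of the three preceding lemmas, transferring existence and uniqueness between the full saddle-point scheme (\ref{Tscheme0101*}) and the reduced divergence-free problem (\ref{3c1}) via the equivalence result. First I would invoke Lemma \ref{lemma15a}: any solution of the full scheme restricts to a solution of the reduced one, and conversely every solution of (\ref{3c1}) lifts to a solution of (\ref{Tscheme0101*}) once the multipliers $(\mathcal{P}_h,\mathcal{R}_h)$ are recovered from (\ref{lemma15a-1}). Thus the entire statement reduces to (a) solvability of (\ref{3c1}) and (b) solvability of the multiplier equations (\ref{lemma15a-1}), with $\bm{L}_h,\bm{N}_h,\bm{A}_h$ always reconstructed consistently through $\mathcal{G}_h,\mathcal{K}_h,\mathcal{Y}_h$.

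For the existence claim, Lemma \ref{results11} already guarantees that (\ref{3c1}) possesses at least one solution $(\bm{L}_h,\mathcal{U}_h,\bm{N}_h,\mathcal{B}_h,\bm{A}_h,\mathcal{T}_h)$, together with the a priori bounds (\ref{T-3c41*})--(\ref{T-3c42*}). I would then feed this solution into part (II) of Lemma \ref{lemma15a}. The right-hand sides of (\ref{lemma15a-1:sub1}) and (\ref{lemma15a-1:sub2}) define bounded linear functionals that vanish on $\mathbf{\bar{V}}_h$ and $\mathbf{\bar{W}}_h$ respectively, precisely because for test functions in these divergence-free kernels they reduce to the residuals of (\ref{3c1-b}) and (\ref{3c1-e}), which are zero. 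Hence, by the closed-range argument associated with the inf-sup conditions (\ref{T-inf-sup-bh}) and (\ref{T-inf-sup-tildebh}) of Lemma \ref{lemma15}, there exist multipliers $\mathcal{P}_h\in[Q_h\times\hat{Q}_h^0]$ and $\mathcal{R}_h\in[Q_h\times\hat{R}_h^0]$ satisfying (\ref{lemma15a-1}). Assembling the full tuple and applying the equivalence once more yields a solution of (\ref{Tscheme0101*}).

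For uniqueness under the smallness condition (\ref{3c81}), I would apply Lemma \ref{results211}, which delivers a unique solution of (\ref{3c1}). Since the reduced unknowns are then fixed, so are the right-hand sides of (\ref{lemma15a-1}), and the inf-sup inequalities of Lemma \ref{lemma15} force the associated multipliers $(\mathcal{P}_h,\mathcal{R}_h)$ to be unique as well. By the equivalence in Lemma \ref{lemma15a}, the full tuple solving (\ref{Tscheme0101*}) is therefore unique, completing the uniqueness half.

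The genuinely substantive work — the Schaefer fixed-point argument for existence and the contraction-type estimate forcing uniqueness — has already been carried out in Lemmas \ref{results11} and \ref{results211}, so the present theorem is essentially a packaging step. The one point that requires care, and hence the main obstacle, is the multiplier-recovery argument: I must verify that the functionals on the right of (\ref{lemma15a-1}) are orthogonal to the divergence-free kernels $\mathbf{\bar{V}}_h$ and $\mathbf{\bar{W}}_h$ so that the inf-sup conditions indeed apply. This compatibility is guaranteed directly by the structure of the reduced equations in (\ref{3c1}) together with Lemma \ref{lemma15}, so no new estimate is needed beyond what the earlier lemmas already supply.
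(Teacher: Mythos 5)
Your proposal follows essentially the same route as the paper's own (very terse) proof: existence and uniqueness of $(\mathcal{U}_h,\mathcal{B}_h,\mathcal{T}_h)$ from Lemmas \ref{lemma15a}--\ref{results211}, and recovery of the multipliers $(\mathcal{P}_h,\mathcal{R}_h)$ from \eqref{lemma15a-1} via the inf-sup conditions of Lemma \ref{lemma15}. Your added remark that the right-hand sides of \eqref{lemma15a-1} vanish on the divergence-free kernels $\mathbf{\bar{V}}_h$ and $\mathbf{\bar{W}}_h$ is a correct and welcome elaboration of the compatibility step the paper leaves implicit.
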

\begin{proof}
The existence and uniqueness of the discrete solutions $\mathcal{U}_h$, $\mathcal{B}_h$ and $\mathcal{T}_h$ follow from Lemma \ref{lemma15a} - Lemma \ref{results211}, and the existence and uniqueness of the discrete solution, $\mathcal{P}_h$,   to  \eqref{lemma15a-1:sub1} and the discrete solution, $\mathcal{R}_h$,   to    \eqref{lemma15a-1:sub2}  follow from  the two discrete inf-sup inequalities in Lemma \ref{lemma15}.
\end{proof}

\section{Error estimates}
This section is devoted to establishing  error estimates  for the HDG scheme (\ref{Tscheme0101*}). To this end, we assume that  the weak solution, $(\mathbf{u},\mathbf{B},T,p,r)$, to the problem (\ref{mhd1}) satisfies the following regularity conditions:
\begin{eqnarray}\label{Tregularity}
&\mathbf{u}\in\mathbf{V}\cap[H^{k+1}(\Omega)]^d,
\quad  \mathbf{B}\in  \mathbf{W}\cap[H^{k+1}(\Omega)]^d,\nonumber\\
&T \in  H_0^1\cap[H^{k+1}(\Omega)]^d,
\quad p \in L_0^2(\Omega)\cap  H^{k}(\Omega) ,
\quad r\in H_0^1(\Omega)\cap H^{k}(\Omega).
\end{eqnarray}
Here we recall that $k\geq 1$.
We set
\begin{eqnarray*}
&\Pi_1\mathbf{u}|_K:=\{\mathbf{P}_k^{\mathcal{RT}}(\mathbf{u}|_K),\mathbf{Q}_k^b(\mathbf{u}|_K)\},\quad
\Pi_2\mathbf{B}|_K:=\{\mathbf{P}_k^{\mathcal{RT}}(\mathbf{B}|_K),\mathbf{Q}_k^b(\mathbf{B}|_K)\},\\
&\Pi_3T|_K:=\{Q_k^o(T|_K),Q_k^b(T|_K)\}, \quad
\Pi_4p|_K:=\{Q_{k-1}^o(p|_K),Q_k^b(p|_K)\}, \quad
\Pi_5r|_K:=\{Q_{k-1}^o(r|_K),Q_k^b(r|_K)\},
\end{eqnarray*}
for any $ K\in\mathcal{T}_h $.

\begin{myLem}\label{lemma21}
For any $(\mathcal{V}_h,\mathcal{W}_h,\mathcal{Z}_h,\mathcal{Q}_h,\mathcal{R}_h)\in[\mathbf{V}_h\times\hat{\mathbf{V}}_h^0]\times[\mathbf{V}_h\times\hat{\mathbf{W}}_h^0]\times [Z_h\times\hat{Z}_h^0]\times [Q_h\times\hat{Q}_h^0]\times [Q_h\times\hat{R}_h^0]$,
there hold
\begin{small}
\begin{subequations}\label{d1}
\begin{align}
&\Pi_m^u\bm{L}_h-\frac{1}{H_a^2}\mathcal{G}_h\Pi_1\mathbf{u}=0,
\label{d1:sub1}\\
&\frac{1}{H_a^2}(\mathcal{G}_h\Pi_1\mathbf{u},\mathcal{G}_h\mathcal{V}_h)
+s_{1h}(\Pi_1\mathbf{u},\mathcal{V}_h)
+b_{1h}(\mathcal{V}_h,\Pi_4p)-b_{1h}(\Pi_1\mathbf{u},\mathcal{Q}_h)
+c_{1h}(\Pi_1\mathbf{u};\Pi_1\mathbf{u},\mathcal{V}_h)
+c_{2h}(\mathcal{V}_h;\Pi_2\mathbf{B},\Pi_2\mathbf{B})
\nonumber\\
=&(\mathbf{f}_1,\mathcal{V}_h)
+E_u({\mathbf{u},\mathcal{V}_h})
+E_{\tilde{u}}(\mathbf{u},\mathcal{V}_h)
+E_{\tilde{B1}}(\mathbf{B},\mathcal{V}_h)
-G_{3h}(\Pi_3\mathcal{T}_h,\mathcal{V}_h),
\label{d1:sub2}\\
&\Pi_m^B\bm{N}_h-\frac{1}{R_e^2}\mathcal{K}_h\Pi_2\mathbf{B}=0
,\label{d1:sub3}\\
&
\frac{1}{R_m}(\mathcal{K}_h\Pi_2\mathbf{B},\mathcal{K}_h\mathcal{W}_h)
+s_{2h}(\Pi_2\mathbf{B},\mathcal{W}_h) 
+b_{2h}(\mathcal{W}_h,\Pi_5r)-b_{2h}(\Pi_2\mathbf{B},\mathcal{\theta}_h)
-c_{2h}(\Pi_1\mathbf{u};\Pi_2\mathbf{B},\mathcal{W}_h)
\nonumber\\
=&
\frac{1}{R_m}(\mathbf{f}_2,\mathcal{W}_h)
+E_B({\mathbf{B},\mathcal{W}_h})
+E_{\tilde{B2}}(\mathbf{u};\mathbf{B},\mathcal{V}_h),
\label{d1:sub4}\\
&\Pi_m^T\bm{A}_h-\frac{1}{P_rR_e}\mathcal{Y}_h\Pi_3T=0,
\label{d1:sub5}\\
&
\frac{1}{P_rR_e}(\mathcal{Y}_h\Pi_3T,\mathcal{Y}_h\mathcal{Z}_h)
+s_{3h}(\Pi_3T,\mathcal{Z}_h)+c_{3h}(\Pi_1\mathbf{u};\Pi_3T,\mathcal{Z}_h)
=(f_3,\mathcal{Z}_h)+E_T(T,\mathcal{Z}_h)+E_{\tilde{T}}(\mathbf{u};T,\mathcal{Z}_h),
\label{d1:sub6}
\end{align}
\end{subequations}
\end{small}
where
\begin{align*}
&
E_u({\mathbf{u},\mathcal{V}_h}):=-\frac{1}{H_a^2}\langle  
(\hat{\mathbf{v}}_h-\mathbf{v}_h),(\Pi_m^u\bm{L}-\bm{L}) \mathbf{n}
\rangle_{\partial\mathcal{T}_h}
+\frac{1}{H_a^2}\langle h_K^{-1}(\mathbf{P}_k^{RT}\mathbf{u}-\mathbf{u}),
\mathbf{v}_h-\hat{\mathbf{v}}_h\rangle_{\partial\mathcal{T}_h},\\
&
E_B({\mathbf{B},\mathcal{W}_h}):=\frac{1}{R_m^2}\langle  
(\hat{\mathbf{w}}_h-\mathbf{w}_h),(\Pi_m^u\bm{N}-\bm{N})\times \mathbf{n}
\rangle_{\partial\mathcal{T}_h}
+\frac{1}{R_m^2}\langle h_K^{-1}(\mathbf{P}_k^{RT}\mathbf{B}-\mathbf{B})\times \mathbf{n},
(\mathbf{w}_h-\hat{\mathbf{w}}_h)\times \mathbf{n}\rangle_{\partial\mathcal{T}_h},\\
&
E_T(T,\mathcal{Z}_h):=-\frac{1}{P_rR_e}\langle  
(\hat{z}_h-z_h),(\Pi_m^T\bm{A}-\bm{A}) \mathbf{n}
\rangle_{\partial\mathcal{T}_h}
+\frac{1}{P_rR_e}\langle h_K^{-1}(Q_k^oT-T),
z_h-\hat{z}_h\rangle_{\partial\mathcal{T}_h},\\
&
E_{\tilde{u}}(\mathbf{u},\mathcal{V}_h):=
\frac{1}{2N}(\mathbf{u}\otimes\mathbf{u}-
\mathbf{P}_k^{\mathcal{RT}}\mathbf{u}\otimes\mathbf{P}_k^{\mathcal{RT}}\mathbf{u},\nabla_h\mathbf{v}_h)
-\frac{1}{2N}\langle(\mathbf{u}\otimes\mathbf{u}-
\mathbf{Q}_k^b\mathbf{u}\otimes\mathbf{Q}_k^b\mathbf{u})\mathbf{n},
\mathbf{v}_h\rangle_{\partial\mathcal{T}_h}\\
&
\quad\quad\quad\quad\quad\quad
-\frac{1}{2N}(\mathbf{u}\cdot\nabla\mathbf{u}-
\mathbf{P}_k^{\mathcal{RT}}\mathbf{u}\cdot\nabla_h\mathbf{P}_k^{\mathcal{RT}}\mathbf{u},\mathbf{v}_h)
-\frac{1}{2N}\langle\hat{\mathbf{v}}_h\otimes\mathbf{Q}_k^b\mathbf{u}\mathbf{n},
\mathbf{P}_k^{\mathcal{RT}}\mathbf{u}\rangle_{\partial\mathcal{T}_h},\\
&
E_{\tilde{B1}}(\mathbf{B},\mathcal{V}_h)
:=-\frac{1}{R_m}(\nabla_h\times(\mathbf{B}-\mathbf{P}_k^{\mathcal{RT}}\mathbf{B}),\mathbf{v}_h\times\mathbf{B})
+\frac{1}{R_m}(\nabla_h\times\mathbf{P}_k^{\mathcal{RT}}\mathbf{B},\mathbf{v}_h\times(\mathbf{P}_k^{\mathcal{RT}}\mathbf{B}-\mathbf{B}))\\
&
\quad\quad\quad\quad\quad\quad
-\frac{1}{R_m}\langle(\mathbf{P}_k^{\mathcal{RT}}\mathbf{B}-\mathbf{Q}_k^b\mathbf{B})\times\mathbf{n},
\mathbf{v}_h\times\mathbf{P}_k^{\mathcal{RT}}\mathbf{B}
\rangle_{\partial\mathcal{T}_h},\\
&
E_{\tilde{B2}}(\mathbf{u};\mathbf{B},\mathcal{W}_h)
:=-\frac{1}{R_m}(\nabla_h\times\mathbf{w}_{h},
(\mathbf{u}\times\mathbf{B}-
\mathbf{P}_k^{\mathcal{RT}}\mathbf{u}\times\mathbf{P}_k^{\mathcal{RT}}\mathbf{B}))
-\frac{1}{R_m}\langle\mathbf{w}_{h}\times\mathbf{n},\mathbf{u}\times\mathbf{B}
\rangle_{\partial\mathcal{T}_h}\\
&
\quad\quad\quad\quad\quad\quad
-\frac{1}{R_m}\langle(\mathbf{w}_{h}-\hat{\mathbf{w}}_h)\times\mathbf{n},
\mathbf{P}_k^{\mathcal{RT}}\mathbf{u}\times\mathbf{Q}_k^o\mathbf{B}
\rangle_{\partial\mathcal{T}_h},\\
&
E_{\tilde{T}}(\mathbf{u};T,\mathcal{Z}_h):=
\frac{1}{2}(\mathbf{u}T-
\mathbf{P}_k^{\mathcal{RT}}\mathbf{u}Q_k^0T,\nabla_hz_h)-
\frac{1}{2}\langle(\mathbf{u}T-
\mathbf{Q}_k^b\mathbf{u}Q_k^bT)\cdot\mathbf{n},
z_h\rangle_{\partial\mathcal{T}_h}\\
&
\quad\quad\quad\quad\quad\quad
-\frac{1}{2}(\mathbf{u}\cdot\nabla T-
\mathbf{P}_k^{\mathcal{RT}}\mathbf{u}\cdot\nabla_hQ_k^oT,z_h)-
\frac{1}{2}\langle(\mathbf{Q}_k^b\mathbf{u}\hat{z}_h)\cdot\mathbf{n},
Q_k^oT\rangle_{\partial\mathcal{T}_h}.
\end{align*}
In addition, there hold
\begin{align}\label{d2}
\mathbf{P}_k^{\mathcal{RT}}\mathbf{u}|_K\in[\mathcal{P}_k(K)]^d\quad  \mbox{and} \quad
\mathbf{P}_k^{\mathcal{RT}}\mathbf{B}|_K\in[\mathcal{P}_k(K)]^d,
\quad \forall K\in\mathcal{T}_h.
\end{align}
\end{myLem}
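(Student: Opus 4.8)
The plan is to prove \eqref{d1} as a pure consistency computation: insert the projected exact fields into the HDG forms and identify the mismatch with the stated error functionals, no estimates being required at this stage. Throughout I would use the mixed form \eqref{mhd1-HDG} of the continuous problem together with the defining relations \eqref{lemma91} of the $\mathcal{RT}$ projection and the $L^2$-orthogonality of $Q_k^o,Q_k^b$ from Lemma \ref{T-lemma1*}. First I would dispatch the three ``auxiliary-variable'' identities \eqref{d1:sub1}, \eqref{d1:sub3}, \eqref{d1:sub5}. For \eqref{d1:sub1}, test against an arbitrary $\bm{J}_h\in\mathbf{D}_h$: by the definition \eqref{def-khu} of $\mathcal{G}_h$ and of $a_{1h}$ one has $(\mathcal{G}_h\Pi_1\mathbf{u},\bm{J}_h)=-(\mathbf{P}_k^{\mathcal{RT}}\mathbf{u},\nabla_h\cdot\bm{J}_h)+\langle\mathbf{Q}_k^b\mathbf{u},\bm{J}_h\cdot\mathbf{n}\rangle_{\partial\mathcal{T}_h}$, whereas $(\Pi_m^u\bm{L},\bm{J}_h)=\frac{1}{H_a^2}(\nabla\mathbf{u},\bm{J}_h)$. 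Integrating $\nabla\mathbf{u}$ by parts elementwise and subtracting, the volume residual $(\mathbf{u}-\mathbf{P}_k^{\mathcal{RT}}\mathbf{u},\nabla_h\cdot\bm{J}_h)$ vanishes by \eqref{lemma91:sub2} (as $\nabla_h\cdot\bm{J}_h$ is piecewise of degree $\le k-1$) and the face residual $\langle\mathbf{u}-\mathbf{Q}_k^b\mathbf{u},\bm{J}_h\cdot\mathbf{n}\rangle_{\partial\mathcal{T}_h}$ vanishes since $\bm{J}_h\cdot\mathbf{n}|_e$ has degree $\le k-1$ and lies in the range of $\mathbf{Q}_k^b$. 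The gradient-based identity \eqref{d1:sub5} follows verbatim via \eqref{def-khT} (using $Q_k^o$-orthogonality in the volume), and the curl-based identity \eqref{d1:sub3} follows via \eqref{def-khB} with the curl integration-by-parts formula replacing the divergence one.

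For the momentum identity \eqref{d1:sub2} I would test \eqref{mhd1-HDG:sub1} against $\mathbf{v}_h$ and integrate termwise over $\mathcal{T}_h$. The diffusion term $\frac{1}{H_a^2}(\nabla\cdot\bm{L},\mathbf{v}_h)_{\mathcal{T}_h}$ is integrated by parts; the single-valued numerical trace $\hat{\mathbf{v}}_h$ is inserted at no cost using the interface continuity of $\bm{L}\cdot\mathbf{n}$ and the boundary condition, so that the face contribution becomes $\frac{1}{H_a^2}\langle\bm{L}\cdot\mathbf{n},\mathbf{v}_h-\hat{\mathbf{v}}_h\rangle_{\partial\mathcal{T}_h}$. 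Replacing $\bm{L}$ by $\Pi_m^u\bm{L}$ and invoking \eqref{d1:sub1} rewrites this as $\frac{1}{H_a^2}(\mathcal{G}_h\Pi_1\mathbf{u},\mathcal{G}_h\mathcal{V}_h)+s_{1h}(\Pi_1\mathbf{u},\mathcal{V}_h)$ plus exactly the boundary discrepancy $E_u(\mathbf{u},\mathcal{V}_h)$, whose two pieces are the $\Pi_m^u\bm{L}-\bm{L}$ flux mismatch and the $\mathbf{P}_k^{\mathcal{RT}}\mathbf{u}-\mathbf{u}$ stabilization mismatch. The pressure term yields $b_{1h}(\mathcal{V}_h,\Pi_4p)$, and since $\nabla\cdot\mathbf{u}=0$ forces $\nabla\cdot\mathbf{P}_k^{\mathcal{RT}}\mathbf{u}=0$ by Lemma \ref{lemma9}, the constraint term $-b_{1h}(\Pi_1\mathbf{u},\mathcal{Q}_h)$ may be appended for free. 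The convection term is treated through the skew-symmetric splitting that defines $c_{1h}$: adding and subtracting $\mathbf{P}_k^{\mathcal{RT}}\mathbf{u}\otimes\mathbf{P}_k^{\mathcal{RT}}\mathbf{u}$ in the volume and face products reproduces $c_{1h}(\Pi_1\mathbf{u};\Pi_1\mathbf{u},\mathcal{V}_h)$, the remainder being collected into $E_{\tilde u}(\mathbf{u},\mathcal{V}_h)$; the Lorentz term is handled analogously, producing $c_{2h}(\mathcal{V}_h;\Pi_2\mathbf{B},\Pi_2\mathbf{B})+E_{\tilde{B1}}(\mathbf{B},\mathcal{V}_h)$, while the buoyancy term gives $-G_{3h}(\Pi_3T,\mathcal{V}_h)$.

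The magnetic identity \eqref{d1:sub4} and the thermal identity \eqref{d1:sub6} are obtained by the identical mechanism, testing \eqref{mhd1-HDG:sub3} against $\mathbf{w}_h$ and \eqref{mhd1-HDG:sub5} against $z_h$: the diffusion/curl terms reduce through \eqref{d1:sub3}, \eqref{d1:sub5} to $\frac{1}{R_m}(\mathcal{K}_h\Pi_2\mathbf{B},\mathcal{K}_h\mathcal{W}_h)+s_{2h}(\Pi_2\mathbf{B},\mathcal{W}_h)+E_B$ and $\frac{1}{P_rR_e}(\mathcal{Y}_h\Pi_3T,\mathcal{Y}_h\mathcal{Z}_h)+s_{3h}(\Pi_3T,\mathcal{Z}_h)+E_T$, the coupling/convection terms give $c_{2h}/E_{\tilde{B2}}$ and $c_{3h}/E_{\tilde T}$, and $b_{2h}(\Pi_2\mathbf{B},\theta_h)=0$ again because $\mathbf{P}_k^{\mathcal{RT}}\mathbf{B}$ is divergence-free. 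Finally \eqref{d2} is immediate: $\nabla\cdot\mathbf{u}=\nabla\cdot\mathbf{B}=0$ and the commuting property in Lemma \ref{lemma9} give $\nabla\cdot\mathbf{P}_k^{\mathcal{RT}}\mathbf{u}=\nabla\cdot\mathbf{P}_k^{\mathcal{RT}}\mathbf{B}=0$, whence Lemma \ref{lemma8} forces both projections into $[\mathcal{P}_k(K)]^d$.

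The main obstacle is the bookkeeping for the nonlinear terms in \eqref{d1:sub2}, \eqref{d1:sub4}, \eqref{d1:sub6}: one must repeatedly add and subtract the projected fields inside the tensor products and the boundary pairings and then sort the outcome into pieces that assemble into $c_{1h},c_{2h},c_{3h}$ at the projections versus pieces that survive as the $E$-functionals. The delicate point is tracking the face terms carrying $\hat{\mathbf{v}}_h,\hat{\mathbf{w}}_h,\hat{z}_h$: for the exact solution these single-valued traces cancel across shared faces by the continuity of $\mathbf{u},\mathbf{B},T$ and of their normal and tangential fluxes, so that only the explicitly listed projection mismatches remain, and this cancellation is precisely what pins down the form of each error functional. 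Once the matching is carried out term by term, the identities \eqref{d1} follow.
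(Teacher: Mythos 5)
Your proposal is correct and takes essentially the same approach as the paper's own proof: both establish \eqref{d2} from Lemmas \ref{lemma9} and \ref{lemma8}, verify the auxiliary identities \eqref{d1:sub1}, \eqref{d1:sub3}, \eqref{d1:sub5} through elementwise integration by parts combined with the $L^2$-orthogonality of $\Pi_m^u, Q_k^o, Q_k^b$ and the $\mathcal{RT}$ moment properties \eqref{lemma91}, and then derive \eqref{d1:sub2}, \eqref{d1:sub4}, \eqref{d1:sub6} by the same term-by-term consistency computation in which the single-valuedness of the exact traces, the divergence-free property of $\mathbf{P}_k^{\mathcal{RT}}\mathbf{u}$ and $\mathbf{P}_k^{\mathcal{RT}}\mathbf{B}$ in the $b_{1h}$, $b_{2h}$ terms, and the add-and-subtract of projected fields in the nonlinear terms pin down exactly the stated $E$-functionals. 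The only difference is directional and cosmetic: you test the strong form \eqref{mhd1-HDG} and integrate toward the discrete bilinear forms, whereas the paper evaluates the discrete forms at the projections and reduces them to the continuous terms before invoking \eqref{mhd1}.
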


\begin{proof}
We first show (\ref{d2}). For all $K\in\mathcal{T}_h$, by Lemma \ref{lemma10}  we have
\begin{align*}
(\nabla\cdot\mathbf{P}_j^{\mathcal{RT}}\mathbf{u},\varphi_h)_K=(\nabla\cdot\mathbf{u},\varphi_h)_K=0,
\quad\forall \varphi_h\in \mathcal{P}_k(K),\\
(\nabla\cdot\mathbf{P}_j^{\mathcal{RT}}\mathbf{B},\mathcal{\theta}_h)_K=(\nabla\cdot\mathbf{B},\mathcal{\theta}_h)_K=0,
\quad\forall \mathcal{\theta}_h\in \mathcal{P}_k(K),
\end{align*}
which mean that
\begin{align*}
\nabla\cdot\mathbf{P}_j^{\mathcal{RT}}\mathbf{u}=0,
\quad\quad
\nabla\cdot\mathbf{P}_j^{\mathcal{RT}}\mathbf{B}=0.
\end{align*}
Then the result (\ref{d2}) follows from Lemma \ref{lemma8}.

By the simple $L^2$-projection  $\Pi_m^u$, 
 integration by parts, and the fact 
$\bm{L}=\frac{1}{H_a^2}\nabla\mathbf{u}$ we get
\begin{align*}
&a^L(\Pi_m^u\bm{L},\bm{J}_h)-	a_{1h}(\Pi_1\mathbf{u},\bm{J}_h)\\
=&{H_a^2}(\Pi_m^u\bm{L},\bm{J}_h)+
(\mathbf{P}_k^{\mathcal{RT}}\mathbf{u},\nabla_h\cdot \bm{J}_h)-\langle\mathbf{Q}_k^b\mathbf{u},\bm{J}_h\mathbf{n} \rangle_{\partial\mathcal{T}_h}\\
=&{H_a^2}(\bm{L},\bm{J}_h)+
(\mathbf{u},\nabla_h\cdot \bm{J}_h)-\langle\mathbf{u},\bm{J}_h\mathbf{n} \rangle_{\partial\mathcal{T}_h}\\
=&0.
\end{align*}
From the definitions of the bilinear forms $a_{1h}(\cdot,\cdot)$ and the fact that
$\Pi_m^u$ is simple $L^2$-projection,
 the Green's formula, the relation $\langle\nabla\mathbf{u}\ \ \mathbf{n},\hat{\mathbf{v}}_h\rangle_{\partial\mathcal{T}_h}=0$  and the definition of $E_u({\mathbf{u},\mathcal{V}_h})$, we immediately get,  for any $\mathcal{V}_h \in[\mathbf{V}_h\times\hat{\mathbf{V}}_h^0]$,
\begin{align}\label{d4}
&a_{1h}(\mathcal{V}_h,\Pi_m^u\bm{L})
+s_{1h}(\Pi_1\mathbf{u},\mathcal{V}_h)\nonumber\\
=&\frac{1}{H_a^2}(\mathbf{v}_h,\nabla_{h}\cdot\Pi_m^u\bm{L})
-\frac{1}{H_a^2}\langle  
\mathbf{v}_h,\Pi_m^u\bm{L} \mathbf{n}
\rangle_{\partial\mathcal{T}_h}
+\frac{1}{H_a^2}\langle h_K^{-1}(\mathbf{P}_k^{RT}\mathbf{u}-\mathbf{u}),
\mathbf{v}_h-\hat{\mathbf{v}}_h\rangle_{\partial\mathcal{T}_h}\nonumber\\
=&\frac{1}{H_a^2}(\mathbf{v}_h,\nabla\cdot \bm{L})
-\frac{1}{H_a^2}\langle  
(\hat{\mathbf{v}}_h-\mathbf{v}_h),(\Pi_m^uL-L) \mathbf{n}
\rangle_{\partial\mathcal{T}_h}
+\frac{1}{H_a^2}\langle h_K^{-1}(\mathbf{P}_k^{RT}\mathbf{u}-\mathbf{u}),
\mathbf{v}_h-\hat{\mathbf{v}}_h\rangle_{\partial\mathcal{T}_h}\nonumber\\
=&\frac{1}{H_a^2}(\mathbf{v}_h,\nabla\cdot \bm{L})
+E_u({\mathbf{u},\mathcal{V}_h}).
\end{align}
In view of  the definitions of $b_{1h}(\cdot,\cdot)$, the projection property, and the relations (\ref{lemma91:sub1}), (\ref{d2})  and  $\langle\mathbf{u}\cdot\mathbf{n},\hat{q}_h\rangle_{\partial\mathcal{T}_h}=0$, we get
\begin{align*}
b_{1h}(\mathcal{V}_h,\Pi_3p)-b_{1h}(\Pi_1\mathbf{u},\mathcal{Q}_h)
=&
-(Q_{k-1}^op,\nabla_{h}\cdot\mathbf{v}_h)
+\langle \hat{\mathbf{v}}_h \mathbf{n},Q_k^bp
\rangle_{\partial\mathcal{T}_h}
+(\nabla\cdot \mathbf{P}_k^{\mathcal{RT}}\mathbf{u},q_h)-
\langle \mathbf{P}_k^{\mathcal{RT}}\mathbf{u}\cdot\mathbf{n},\hat{q}_h\rangle_{\partial\mathcal{T}_h}\\
=&
(\nabla p,\mathbf{v}_h)+(\nabla\cdot \mathbf{P}_k^{\mathcal{RT}}\mathbf{u},q_h)-
\langle \mathbf{P}_k^{\mathcal{RT}}\mathbf{u}\cdot\mathbf{n},\hat{q}_h\rangle_{\partial\mathcal{T}_h}\\
=&
(\nabla p,\mathbf{v}_h)-\langle\mathbf{u}\cdot\mathbf{n},\hat{q}_h\rangle_{\partial\mathcal{T}_h}\\
=&
(\nabla p,\mathbf{v}_h), \quad \forall \mathcal{V}_h \in[\mathbf{V}_h\times\hat{\mathbf{V}}_h^0].
\end{align*}
By the Green's formula and the definitions of $c_{1h}(\cdot;\cdot,\cdot)$ and $E_{\tilde{u}}
(\cdot,\cdot)$  we    get
\begin{align*}
c_{1h}(\Pi_1\mathbf{u};\Pi_1\mathbf{u},\mathcal{V}_h)
=&
-\frac{1}{2N}(
\mathbf{P}_k^{\mathcal{RT}}\mathbf{u}\otimes\mathbf{P}_k^{\mathcal{RT}}\mathbf{u},
,\nabla_{h}\mathbf{v}_h)
+
\frac{1}{2N}\langle\mathbf{Q}_k^{b}\mathbf{u}\otimes\mathbf{Q}_k^{b}\mathbf{u} \mathbf{n}
,\mathbf{v}_h\rangle_{\partial_{\mathcal{T}_h}}\\
&-\frac{1}{2N}(
\mathbf{v}_h\otimes\mathbf{P}_k^{\mathcal{RT}}\mathbf{u},
\mathbf{P}_k^{\mathcal{RT}}\mathbf{u})
+\frac{1}{2N}\langle
\hat{\mathbf{v}}_h\otimes
\mathbf{Q}_k^{b}\mathbf{u}\mathbf{n}
,\mathbf{P}_k^{\mathcal{RT}}\mathbf{u}\rangle_{\partial_{\mathcal{T}_h}}\\
=&
\frac{1}{2N}(\nabla\cdot(\mathbf{u}\otimes\mathbf{u}),\mathbf{v}_h)+
\frac{1}{2N}(\mathbf{u}\otimes\mathbf{u}-\mathbf{P}_k^{\mathcal{RT}}\mathbf{u}\otimes\mathbf{P}_k^{\mathcal{RT}}\mathbf{u},
\nabla_h\mathbf{v}_h)\nonumber\\
& -\frac{1}{2N}\langle(\mathbf{u}\otimes\mathbf{u}-\mathbf{Q}_k^{b}\mathbf{u}\otimes\mathbf{Q}_k^{b}\mathbf{u}) \mathbf{n}
,\mathbf{v}_h\rangle_{\partial_{\mathcal{T}_h}}\\
&+
\frac{1}{2N}(\nabla\cdot(\mathbf{u}\otimes \mathbf{u}),\mathbf{v}_h)
+\frac{1}{2N}( \mathbf{u}\cdot\nabla\mathbf{u}-\mathbf{P}_k^{\mathcal{RT}} \mathbf{u}\cdot\nabla_h\mathbf{P}_k^{\mathcal{RT}}\mathbf{u},
\mathbf{v}_h)\nonumber\\
&
+\frac{1}{2N}\langle(\hat{\mathbf{v}}_h\otimes\mathbf{Q}_k^{b} \mathbf{u}\ \mathbf{n}
,\mathbf{P}_k^{\mathcal{RT}}\mathbf{u}\rangle_{\partial_{\mathcal{T}_h}}\\
=&
\frac{1}{N}(\nabla\cdot(\mathbf{u}\otimes\mathbf{u}),\mathbf{v}_h)+E_{\tilde{u}}
(\mathbf{u},\mathcal{V}_h).
\end{align*}
By the definition of the projection $Q_k^o$, we have
\begin{align*}
G_{3h}(\Pi_3\mathcal{T}_h,\mathcal{V}_h)
=
\frac{G_r}{NR_e^2}(\frac{\mathbf{g}}{g}Q_k^oT_{h},\mathbf{v}_{h})
=
\frac{G_r}{NR_e^2}(\frac{\mathbf{g}}{g}T,\mathbf{v}_{h}).
\end{align*}
Combining the above relations and \eqref{mhd1}, we finally arrive at the desired conclusion (\ref{d1:sub1})-(\ref{d1:sub2}).
Similarly, we can obtain (\ref{d1:sub3})-(\ref{d1:sub4}) and (\ref{d1:sub5})-(\ref{d1:sub6}).
This completes the proof.
\end{proof}

\begin{myLem}\label{lemma18}
For $\mathcal{V}_h\in[\mathbf{V}_h\times\hat{\mathbf{V}}_h^0]$,
$\mathcal{W}_h\in[\mathbf{V}_h\times\hat{\mathbf{W}}_h^0]$, and
$\mathcal{Z}_h\in [Z_h\times\hat{Z}_h^0]$,
there  hold
\begin{subequations}\label{lemma18-res}
\begin{align}
&|E_u(\mathbf{u},\mathcal{V}_h)|\lesssim h^k|\mathbf{u}|_{k+1}|||\mathcal{V}_h|||_V,
\label{lemma18-res:sub1}\\
&|E_B(\mathbf{B},\mathcal{W}_h)|\lesssim h_k|\mathbf{B}|_{k+1}|||\mathcal{W}_h|||_W,
\label{lemma18-res:sub2}\\
&|E_T(T,\mathcal{Z}_h)|\lesssim h^k|T|_{k+1}|||\mathcal{Z}_h|||_Z,
\label{lemma18-res:sub3}\\
&|E_{\tilde{u}}(\mathbf{u},\mathbf{v})|\lesssim h^k\|\mathbf{u}\|_2\|\mathbf{u}\|_{k+1}|||\mathcal{V}_h|||_V,
\label{lemma18-res:sub4}\\
&|E_{\tilde{B}1}(\mathbf{B},\mathcal{V}_h)|
\lesssim h^k\|\mathbf{B}\|_2\|\mathbf{B}\|_{k+1}|||\mathcal{V}_h|||_W;
\label{lemma18-res:sub5}\\
&|E_{\tilde{B}2}(\mathbf{u};\mathbf{B},\mathcal{W}_h)|
\lesssim h^k(\|\mathbf{u}\|_2\|\mathbf{B}\|_{k+1}+
\|\mathbf{B}\|_2\|\mathbf{u}\|_{k+1})|||\mathcal{W}_h|||_W,
\label{lemma18-res:sub6}\\
&|E_{\tilde{T}}(\mathbf{u};T,\mathcal{Z}_h)|\lesssim h^k(\|\mathbf{u}\|_2\|T\|_{k+1}
+\|T\|_2\|\mathbf{u}\|_{k+1})|||\mathcal{Z}_h|||_Z.
\label{lemma18-res:sub7}
\end{align}
\end{subequations}
\end{myLem}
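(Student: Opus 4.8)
The plan is to prove the seven bounds in two groups according to their algebraic structure: the three \emph{linear} projection-error functionals $E_u,E_B,E_T$, which are boundary pairings of a single projection error against a discrete jump, and the four \emph{trilinear} functionals $E_{\tilde u},E_{\tilde B1},E_{\tilde B2},E_{\tilde T}$, which carry a quadratic projection residual. Throughout I would rely on the approximation estimates for the scalar/tangential $L^2$-projections (Lemma \ref{T-lemma1*}) and for the $\mathcal{RT}$-projection (Lemma \ref{lemma10}), the inverse/trace inequality (Lemma \ref{lemma2}), the HDG Sobolev embeddings (Lemmas \ref{lemma7} and \ref{lemma12*}), and the triple-norm characterizations (Lemma \ref{lemma3}). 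I would also use repeatedly that $\bm L=\tfrac1{H_a^2}\nabla\mathbf u$, $\bm N=\tfrac1{R_m^2}\nabla\times\mathbf B$ and $\bm A=\tfrac1{P_rR_e}\nabla T$, so that $|\bm L|_{k,K}\sim|\mathbf u|_{k+1,K}$, $|\bm N|_{k,K}\sim|\mathbf B|_{k+1,K}$ and $|\bm A|_{k,K}\sim|T|_{k+1,K}$, together with the Sobolev embedding $H^2(\Omega)\hookrightarrow L^\infty(\Omega)$ (valid for $d\le 3$), giving $\|\mathbf u\|_{0,\infty}\lesssim\|\mathbf u\|_2$ and likewise for $\mathbf B,T$.

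For \eqref{lemma18-res:sub1}--\eqref{lemma18-res:sub3} I would bound each boundary pairing by Cauchy--Schwarz on every $\partial K$ and split off the factor $\tau^{1/2}=h_K^{-1/2}$, so that the discrete jump appears exactly as $\|\tau^{1/2}(\mathbf v_h-\hat{\mathbf v}_h)\|_{0,\partial K}$. For the consistency part of $E_u$ I would estimate $h_K^{1/2}\|\Pi_m^u\bm L-\bm L\|_{0,\partial K}\lesssim h_K^{1/2}\,h_K^{k-1/2}|\bm L|_{k,K}=h_K^{k}|\mathbf u|_{k+1,K}$ via Lemma \ref{T-lemma1*} (with $j=k$, which is admissible since $\mathbf D_h$ has degree $k-1$), while for the stabilization part $h_K^{-1/2}\|\mathbf P_k^{\mathcal{RT}}\mathbf u-\mathbf u\|_{0,\partial K}\lesssim h_K^{-1/2}\,h_K^{k+1/2}|\mathbf u|_{k+1,K}=h_K^{k}|\mathbf u|_{k+1,K}$ via Lemma \ref{lemma10}. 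Summing over $K$ and applying a final Cauchy--Schwarz against $\|\tau^{1/2}(\mathbf v_h-\hat{\mathbf v}_h)\|_{0,\partial\mathcal T_h}\le|||\mathcal V_h|||_V$ yields \eqref{lemma18-res:sub1}; the proofs of \eqref{lemma18-res:sub2} and \eqref{lemma18-res:sub3} are verbatim, with the tangential jump $(\mathbf w_h-\hat{\mathbf w}_h)\times\mathbf n$ controlled by $|||\mathcal W_h|||_W$ and the scalar jump $z_h-\hat z_h$ by $|||\mathcal Z_h|||_Z$.

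For the trilinear functionals the first step is to \emph{linearize} each quadratic residual by a cross-term splitting, e.g.\ $\mathbf u\otimes\mathbf u-\mathbf P_k^{\mathcal{RT}}\mathbf u\otimes\mathbf P_k^{\mathcal{RT}}\mathbf u=(\mathbf u-\mathbf P_k^{\mathcal{RT}}\mathbf u)\otimes\mathbf u+\mathbf P_k^{\mathcal{RT}}\mathbf u\otimes(\mathbf u-\mathbf P_k^{\mathcal{RT}}\mathbf u)$, and analogously for $\mathbf u\times\mathbf B$, $\mathbf uT$ and the convective residuals; on the boundary pairings I would first use the single-valuedness of the numerical traces and the continuity of the exact fluxes so that the exact-flux contributions telescope over interior faces and vanish on $\partial\Omega$, leaving only pairings against projection errors and discrete jumps. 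On the interior volume pairings I would then apply H\"older with the pair $(2,\infty)$, controlling one smooth factor by $\|\mathbf u\|_{0,\infty}\lesssim\|\mathbf u\|_2$, the projection error in $L^2$ by $h^{k+1}$ through Lemmas \ref{T-lemma1*}--\ref{lemma10}, and the discrete test factor $\|\nabla_h\mathbf v_h\|_0$ (resp.\ $\|\nabla_h\times\mathbf w_h\|_0$, $\|\nabla_h z_h\|_0$) by the triple norm through Lemma \ref{lemma7}; this already delivers an $O(h^{k+1})$ contribution. On the boundary pairings I would use either H\"older $(2,\infty,2)$ with an inverse trace $\|\mathbf v_h\|_{0,2,\partial K}\lesssim h_K^{-1/2}\|\mathbf v_h\|_{0,2,K}$ (Lemma \ref{lemma2}), or a triple $(2,3,6)$ combined with $\|\mathbf v_h\|_{0,6}\lesssim|||\mathcal V_h|||_V$ and $\|\mathbf B\|_{0,3}\lesssim\|\mathbf B\|_2$; for $E_{\tilde B2}$ and $E_{\tilde T}$ I would additionally invoke Lemma \ref{lemma12*} to pass from $\|\mathbf w_h\|_{0,3}$ to $|||\mathcal W_h|||_W$. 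A final Cauchy--Schwarz over $\mathcal T_h$ assembles the global $h^k\|\cdot\|_2\|\cdot\|_{k+1}$ bounds \eqref{lemma18-res:sub4}--\eqref{lemma18-res:sub7}.

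The delicate point, and the main obstacle, is the boundary bookkeeping in $E_{\tilde u}$, $E_{\tilde B2}$ and $E_{\tilde T}$: one must balance the negative power $h_K^{-1/2}$ produced by the trace inverse inequality on the discrete factor against the positive (and in the $L^3$ case fractional) powers $h_K^{k+1/2}$ and $h_K^{k+1-1/3-d/6}$ coming from the $L^2$- and $L^3$-boundary estimates of Lemmas \ref{T-lemma1*} and \ref{lemma10}, so that after collecting three H\"older factors the net per-element power is exactly $h_K^{k}$ uniformly in $d\in\{2,3\}$. Choosing the H\"older exponents consistently with the $d/6$ scaling is precisely what makes the residual powers cancel (this is why the boundary contributions drop only to order $h^{k}$, whereas the interior ones are $h^{k+1}$); once the per-element power is fixed, summation with Cauchy--Schwarz and the triple-norm characterizations of Lemma \ref{lemma3} complete each estimate, mirroring the treatment of the analogous HDG consistency errors in \cite{CX2024,ZZX2023}.
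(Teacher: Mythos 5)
Your proposal is correct and follows essentially the same route as the paper's proof: cross-term splitting of the quadratic residuals, insertion of the single-valued traces $\hat{\mathbf{v}}_h$, $\hat{\mathbf{w}}_h$, $\hat{z}_h$ so the exact-flux boundary pairings vanish, and then H\"older with the $(2,\infty)$, $(2,3,6)$-type exponents combined with Lemmas \ref{lemma2}, \ref{T-lemma1*}, \ref{lemma9}, \ref{lemma10}, \ref{lemma7} and \ref{lemma12*} to balance the $h_K^{\pm 1/2}$ and $d/6$ powers to a net $h_K^k$ per element. The only difference is one of coverage, not method: the paper works out \eqref{lemma18-res:sub4} in detail and declares the remaining six estimates ``similar,'' whereas you also sketch the linear bounds \eqref{lemma18-res:sub1}--\eqref{lemma18-res:sub3}, correctly noting that $j=k$ is admissible in Lemma \ref{T-lemma1*} for the degree-$(k-1)$ projection $\Pi_m^u$.
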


\begin{proof}
We only show \eqref{lemma18-res:sub4}, since the other results can be derived similarly.

Let us estimate the four terms of $E_{\tilde{u}}(\mathbf{u},\mathcal{V}_h)$ one by one.
Using the Cauchy-Schwarz inequality, the  H\"{o}lder's inequality, the Sobolev embedding theorem,
and Lemmas \ref{lemma9}, \ref{lemma10} and \ref{lemma7}, we have
\begin{align*}
&\ \ \ \
|(\mathbf{u}\otimes\mathbf{u}-\mathbf{P}_k^{\mathcal{RT}}\mathbf{u}\otimes\mathbf{P}_k^{\mathcal{RT}}\mathbf{u},
\nabla_h\mathbf{v}_h)|\\
&\leq|((\mathbf{u}-\mathbf{P}_k^{\mathcal{RT}}\mathbf{u})\otimes\mathbf{u},\nabla_h\mathbf{v}_h)|
+|(\mathbf{P}_k^{\mathcal{RT}}\mathbf{u}\otimes(\mathbf{u}-\mathbf{P}_k^{\mathcal{RT}}\mathbf{u}),
\nabla_h\mathbf{v}_h)|\\
&\leq |\mathbf{u}|_{0,\infty,\Omega}
\sum_{K\in\mathcal{T}_h}|\mathbf{u}-\mathbf{P}_k^{\mathcal{RT}}\mathbf{u}|_{0,K}
\|\nabla_h\mathbf{v}_h\|_{0,K}
+\sum_{K\in\mathcal{T}_h}|\mathbf{u}-\mathbf{P}_k^{\mathcal{RT}}\mathbf{u}|_{0,3,K}
|\mathbf{P}_k^{\mathcal{RT}}\mathbf{u}|_{0,6,K}
\|\nabla_h\mathbf{v}_h\|_{0, K}\\
&\leq|\mathbf{u}|_{0,\infty,\Omega}\sum_{K\in\mathcal{T}_h}|\mathbf{u}-\mathbf{P}_k^{\mathcal{RT}}\mathbf{u}|_{0,K}
\|\nabla_h\mathbf{v}_h\|_{0,K}
+\sum_{K\in\mathcal{T}_h}|\mathbf{u}-\mathbf{P}_k^{\mathcal{RT}}\mathbf{u}|_{0,3,K}
(|\mathbf{u}-\mathbf{P}_k^{\mathcal{RT}}\mathbf{u}|_{0,6,K}+|\mathbf{u}|_{0,6,K})
\|\nabla_h\mathbf{v}_h\|_{0, K}\\
&\leq |\mathbf{u}|_{0,\infty,\Omega}\sum_{K\in\mathcal{T}_h}|\mathbf{u}-\mathbf{P}_k^{\mathcal{RT}}\mathbf{u}|_{0,K}
\|\nabla_h\mathbf{v}_h\|_{0,K}
+(|\mathbf{u}-\mathbf{P}_k^{\mathcal{RT}}\mathbf{u}|_{0,6,\Omega}+|\mathbf{u}|_{0,6,\Omega})\sum_{K\in\mathcal{T}_h}|\mathbf{u}-\mathbf{P}_k^{\mathcal{RT}}\mathbf{u}|_{0,3,K}
\|\nabla_h\mathbf{v}_h\|_{0, K}\\
&\lesssim h^{k+1}|\mathbf{u}|_{0,\infty,\Omega}
|\mathbf{u}|_{k+1}|||\mathcal{V}_h|||_V
+||\mathbf{u}||_{1}
\sum_{K\in\mathcal{T}_h}|\mathbf{u}-\mathbf{P}_k^{\mathcal{RT}}\mathbf{u}|_{0,3,K}
\|\nabla_h\mathbf{v}_h\|_{0,K}\\
&\lesssim h^{k+1}|\mathbf{u}|_{0,\infty}
|\mathbf{u}|_{k+1}|||\mathcal{V}_h|||_V
+h^{k+1-d/6}||\mathbf{u}||_{1}|\mathbf{u}|_{k+1}
||\nabla_h\mathbf{v}_h||_{0}\\
&\lesssim h^{k}\|\mathbf{u}\|_2\|\mathbf{u}\|_{k+1}|||\mathcal{V}_h|||_V.
\end{align*}

Similarly, we can obtain
\begin{align*}
&
|\langle(\mathbf{u}\otimes\mathbf{u}-\mathbf{Q}_l^{b}\mathbf{u}
\otimes\mathbf{Q}_K^{b}\mathbf{u})\ \mathbf{n}
,\mathbf{v}_h\rangle_{\partial_{\mathcal{T}_h}}|\\
=&|\langle(\mathbf{u}\otimes\mathbf{u}-\mathbf{Q}_K^{b}\mathbf{u}
\otimes\mathbf{Q}_k^{b}\mathbf{u})\ \mathbf{n}
,\mathbf{v}_h-\hat{\mathbf{v}}_h\rangle_{\partial_{\mathcal{T}_h}}|
\\
\leq&
|\langle(\mathbf{u}-\mathbf{Q}_k^{b}\mathbf{u})
\otimes(\mathbf{u}-\mathbf{Q}_k^{o}\mathbf{u}) \mathbf{n}
,\mathbf{v}_h-\hat{\mathbf{v}}_h\rangle_{\partial_{\mathcal{T}_h}}|
+|\langle(\mathbf{u}-\mathbf{Q}_k^{b}\mathbf{u})\otimes\mathbf{Q}_k^{o}\mathbf{u} \mathbf{n}
,\mathbf{v}_h-\hat{\mathbf{v}}_h\rangle_{\partial_{\mathcal{T}_h}}|
\\
&\ \ \ \
+|\langle(\mathbf{Q}_k^{o}\mathbf{u}-\mathbf{Q}_k^{b}\mathbf{u})
\otimes(\mathbf{u}-\mathbf{Q}_k^{b}\mathbf{u}) \mathbf{n}
,\mathbf{v}_h-\hat{\mathbf{v}}_h\rangle_{\partial_{\mathcal{T}_h}}|
+|\langle\mathbf{Q}_k^{o}\mathbf{u}\otimes(\mathbf{u}-\mathbf{Q}_k^{b}\mathbf{u}) \mathbf{n}
,\mathbf{v}_h-\hat{\mathbf{v}}_h\rangle_{\partial_{\mathcal{T}_h}}|
\\
\leq &
\sum_{K\in\mathcal{T}_h}\left(|\mathbf{u}-\mathbf{Q}_k^{b}\mathbf{u}|_{0,\partial K}
|\mathbf{u}-\mathbf{Q}_k^{o}\mathbf{u}|_{0,\partial K}
|\mathbf{v}_h-\hat{\mathbf{v}}_h|_{0,\infty,\partial K}
+|\mathbf{u}-\mathbf{Q}_k^{b}\mathbf{u}|_{0,\partial K}
|\mathbf{Q}_k^{o}\mathbf{u}|_{0,\infty,\partial K}
|\mathbf{v}_h-\hat{\mathbf{v}}_h|_{0,\partial K}
\right)
\\
&\
+\sum_{K\in\mathcal{T}_h}\left(|\mathbf{Q}_k^{o}\mathbf{u}-\mathbf{Q}_k^{b}\mathbf{u}|_{0,\partial K}
|\mathbf{u}-\mathbf{Q}_k^{b}\mathbf{u}|_{0,\partial K}
|\mathbf{v}_h-\hat{\mathbf{v}}_h|_{0,\infty,\partial K}
+|\mathbf{u}-\mathbf{Q}_k^{b}\mathbf{u}|_{0,\partial K}
|\mathbf{Q}_k^{o}\mathbf{u}|_{0,\infty,\partial K}
|\mathbf{v}_h-\hat{\mathbf{v}}_h|_{0,\partial K}
\right)
\\
\lesssim & h^{k}\|\mathbf{u}\|_2\|\mathbf{u}\|_{k+1}|||\mathcal{V}_h|||_V,
\\\\
&
|(\mathbf{u}\cdot\nabla\mathbf{u}-\mathbf{P}_k^{\mathcal{RT}}\mathbf{u}\cdot\nabla_h\mathbf{P}_k^{\mathcal{RT}}\mathbf{u},
\mathbf{v}_h)|
\\
\leq &
|((\mathbf{u}-\mathbf{P}_k^{\mathcal{RT}}\mathbf{u})\cdot\nabla\mathbf{u},\mathbf{v}_h)|
+|(\mathbf{P}_k^{\mathcal{RT}}\mathbf{u}
\cdot(\nabla\mathbf{u}-\nabla_h\mathbf{P}_k^{\mathcal{RT}}\mathbf{u}),\mathbf{v}_h)|
\\
\leq&
\sum_{K\in\mathcal{T}_h}|\mathbf{u}-\mathbf{P}_k^{\mathcal{RT}}\mathbf{u}|_{0,3,K}
|\nabla\mathbf{u}|_{0,K}
\|\mathbf{v}_h\|_{0,6,K}
+
\sum_{K\in\mathcal{T}_h}|\nabla\mathbf{u}-\nabla_h\mathbf{P}_k^{\mathcal{RT}}\mathbf{u}|_{0,K}
|\mathbf{P}_k^{\mathcal{RT}}\mathbf{u}|_{0,6,K}
\|\mathbf{v}_h\|_{0,3,K}
\\
\lesssim &h^{k}\|\mathbf{u}\|_2\|\mathbf{u}\|_{k+1}|||\mathcal{V}_h|||_V,
\end{align*}
and
\begin{align*}
&
|\langle\hat{\mathbf{v}}_h\otimes\mathbf{Q}_k^{b}\mathbf{u}\ \mathbf{n}
,\mathbf{P}_k^{\mathcal{RT}}\mathbf{u}\rangle_{\partial_{\mathcal{T}_h}}|
=|\langle\hat{\mathbf{v}}_h\otimes\mathbf{Q}_k^{b}\mathbf{u}\ \mathbf{n}
,\mathbf{P}_k^{\mathcal{RT}}\mathbf{u}-\mathbf{Q}_k^{b}\mathbf{u}\rangle_{\partial_{\mathcal{T}_h}}|\\
\leq &
|\langle(\mathbf{v}_h-\hat{\mathbf{v}}_h)\otimes(\mathbf{Q}_k^{b}\mathbf{u}-
\mathbf{Q}_k^{o}\mathbf{u}) \ \mathbf{n}
,\mathbf{P}_k^{\mathcal{RT}}\mathbf{u}-\mathbf{Q}_k^{b}\mathbf{u}\rangle_{\partial_{\mathcal{T}_h}}|
+|\langle\mathbf{v}_h\otimes
(\mathbf{Q}_k^{b}\mathbf{u}-
\mathbf{Q}_k^{o}\mathbf{u})\ \mathbf{n}
,\mathbf{P}_k^{\mathcal{RT}}\mathbf{u}-\mathbf{Q}_k^{b}\mathbf{u}\rangle_{\partial_{\mathcal{T}_h}}|
\\
&\ \ \ \
+|\langle(\mathbf{v}_h-\hat{\mathbf{v}}_h)\otimes\mathbf{Q}_k^{o}\mathbf{u}\ \mathbf{n}
,\mathbf{P}_k^{\mathcal{RT}}\mathbf{u}-\mathbf{Q}_k^{b}\mathbf{u}\rangle_{\partial_{\mathcal{T}_h}}|
+|\langle\mathbf{v}_h\otimes\mathbf{Q}_k^{o}\mathbf{u}\ \mathbf{n}
,\mathbf{P}_k^{\mathcal{RT}}\mathbf{u}-\mathbf{Q}_k^{b}\mathbf{u}\rangle_{\partial_{\mathcal{T}_h}}|
\\
\lesssim & h^{k}\|\mathbf{u}\|_2\|\mathbf{u}\|_{k+1}|||\mathcal{V}_h|||_V.
\end{align*}
As a result,  the desired estimate \eqref{lemma18-res:sub4} follows.
\end{proof}

\begin{myTheo}\label{estimates1}
Let $(\mathcal{U}_h,\mathcal{B}_h,\mathcal{T}_h,\mathcal{P}_h,\mathcal{R}_h)\in
[\mathbf{V}_h\times\hat{\mathbf{V}}_h^0]\times[\mathbf{V}_h\times\hat{\mathbf{W}}_h^0]\times [Z_h\times\hat{Z}_h^0]\times[Q_h\times\hat{Q}_h^0]\times [Q_h\times\hat{R}_h^0]$ be the solutions to the HDG scheme (\ref{Tscheme0101*}).
Under the regularity assumption \eqref{Tregularity} and the smallness condition \eqref{results211},
there hold the following estimates:
\begin{subequations}\label{estimates1res}
\begin{align}
&|||\Pi_1\mathbf{u}-\mathcal{U}_h|||_V
+|||\Pi_2\mathbf{B}-\mathcal{B}_h|||_W
+|||\Pi_3T-\mathcal{T}_h|||_Z
\lesssim h^k C_1,
\label{estimates1res:sub1}\\
&|||\Pi_4p-\mathcal{P}_h|||_Q+|||\Pi_5r-\mathcal{R}_h|||_R\lesssim h^k C_1(\mathbf{u},\mathbf{B},T)
+h^{2k} C_2(\mathbf{u},\mathbf{B}),
\label{estimates1res:sub2}
\end{align}
\end{subequations}
where
\begin{align*}
&C_1(\mathbf{u},\mathbf{B},T):=(||\mathbf{u}||_{k+1}+||\mathbf{B}||_{k+1}+||T||_{k+1})
(1+||\mathbf{u}||_2+||\mathbf{B}||_2+||T||_2),\\
&C_2(\mathbf{u},\mathbf{B}):=(||\mathbf{u}||_{k+1}+||\mathbf{B}||_{k+1})^2
(1+||\mathbf{u}||_2+||\mathbf{B}||_2)^2.
\end{align*}
\end{myTheo}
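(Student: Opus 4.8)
The plan is to run the standard two-step argument for coupled nonlinear saddle-point discretizations: an energy estimate on the divergence-free subspaces for the ``primal'' unknowns, followed by an inf-sup argument for the two pressures, driven throughout by the error equations of Lemma~\ref{lemma21} and the consistency bounds of Lemma~\ref{lemma18}. I abbreviate the projection errors by $e_u:=\Pi_1\mathbf{u}-\mathcal{U}_h$, $e_B:=\Pi_2\mathbf{B}-\mathcal{B}_h$, $e_T:=\Pi_3T-\mathcal{T}_h$, $e_p:=\Pi_4p-\mathcal{P}_h$, $e_r:=\Pi_5r-\mathcal{R}_h$, and observe that $e_u\in\mathbf{\bar{V}}_h$ and $e_B\in\mathbf{\bar{W}}_h$, since by \eqref{d2}, \eqref{T-divfree-velocity}--\eqref{T-divfree-magnet} and the definitions of $\mathbf{\bar{V}}_h,\mathbf{\bar{W}}_h$ both the projected and the discrete velocity and magnetic fields are globally divergence-free. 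Subtracting the scheme \eqref{Tscheme0101*} from the perturbed identities \eqref{d1} then produces error equations whose right-hand sides are exactly the consistency functionals $E_u,E_{\tilde u},E_{\tilde B1},E_B,E_{\tilde B2},E_T,E_{\tilde T}$.

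First I would close the temperature block, which is only one-directionally coupled to the velocity. Subtracting the two temperature equations and testing with $\mathcal{Z}_h=e_T$, the antisymmetry $c_{3h}(\mathcal{U}_h;e_T,e_T)=0$ eliminates the diagonal convection term and leaves $\tfrac{1}{P_rR_e}|||e_T|||_Z^2=-c_{3h}(e_u;\Pi_3T,e_T)+E_T(T,e_T)+E_{\tilde T}(\mathbf{u};T,e_T)$. Estimating the trilinear remainder by Lemma~\ref{lemma13}, bounding $|||\Pi_3T|||_Z$ via Lemmas~\ref{lemma10} and~\ref{lemma3}, and invoking \eqref{lemma18-res:sub3} and \eqref{lemma18-res:sub7}, I obtain a bound of the form $|||e_T|||_Z\lesssim |||e_u|||_V\,\|T\|_{k+1}+h^k(\ldots)$, so that $e_T$ is controlled by $e_u$ up to an $O(h^k)$ consistency term.

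Next I would treat the coupled velocity--magnetic block. Restricting the velocity test function to $\mathbf{\bar{V}}_h$ and the magnetic one to $\mathbf{\bar{W}}_h$ annihilates $b_{1h}(\cdot,e_p)$ and $b_{2h}(\cdot,e_r)$; testing with $\mathcal{V}_h=e_u$, $\mathcal{W}_h=e_B$ and adding the two identities, the choice $\tau=h_K^{-1}$ makes the leading terms coercive, yielding $\tfrac{1}{H_a^2}|||e_u|||_V^2+\tfrac{1}{R_m^2}|||e_B|||_W^2$ on the left (up to norm equivalence). Expanding $\Pi_1\mathbf{u}=\mathcal{U}_h+e_u$ and $\Pi_2\mathbf{B}=\mathcal{B}_h+e_B$ in the nonlinear differences, the identity $c_{1h}(\mathcal{U}_h;e_u,e_u)=0$ together with the sign structure of the Lorentz coupling forces the cancellation of the terms $c_{2h}(e_u;\mathcal{B}_h,e_B)$ and $c_{2h}(e_u;e_B,e_B)$ between the two equations, leaving only $c_{1h}(e_u;\Pi_1\mathbf{u},e_u)$, $c_{2h}(e_u;e_B,\mathcal{B}_h)$, $c_{2h}(\mathcal{U}_h;e_B,e_B)$, the buoyancy term $G_{3h}(e_T,e_u)$ and the consistency functionals. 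Each surviving trilinear term is estimated by Lemma~\ref{lemma13} as a data-controlled factor ($|||\Pi_1\mathbf{u}|||_V$, $|||\mathcal{U}_h|||_V$ or $|||\mathcal{B}_h|||_W$, all bounded through Lemma~\ref{results11}) times a quadratic error quantity $|||e_u|||_V^2$, $|||e_u|||_V|||e_B|||_W$ or $|||e_B|||_W^2$, while $G_{3h}(e_T,e_u)\lesssim|||e_T|||_Z|||e_u|||_V$ is handled by Lemma~\ref{lemma7} and the temperature bound above. The smallness condition \eqref{3c81} then absorbs all these quadratic-in-error quantities into the coercive left-hand side, and Lemma~\ref{lemma18} contributes $h^kC_1(\mathbf{u},\mathbf{B},T)(|||e_u|||_V+|||e_B|||_W)$, giving $|||e_u|||_V+|||e_B|||_W\lesssim h^kC_1$; substituting back into the temperature estimate yields \eqref{estimates1res:sub1}.

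Finally, for the two pressures I would drop the divergence-free restriction and apply the inf-sup inequalities of Lemma~\ref{lemma15}. Testing the velocity error equation against an arbitrary $\mathcal{V}_h\in[\mathbf{V}_h\times\hat{\mathbf{V}}_h^0]$ expresses $b_{1h}(\mathcal{V}_h,e_p)$ as the sum of the already-controlled diffusion and stabilization terms, the nonlinear differences, and the consistency functionals. Expanding the nonlinear differences as $c_{1h}(e_u;\mathcal{U}_h,\mathcal{V}_h)+c_{1h}(\mathcal{U}_h;e_u,\mathcal{V}_h)+c_{1h}(e_u;e_u,\mathcal{V}_h)$ and analogously for $c_{2h}$, the first two summands are $O(h^k)$ (one error factor against bounded data) whereas the genuinely quadratic terms $c_{1h}(e_u;e_u,\mathcal{V}_h)$ and $c_{2h}(\mathcal{V}_h;e_B,e_B)$ are $O(h^{2k})$; dividing by $|||\mathcal{V}_h|||_V$ and taking the supremum produces precisely the $h^kC_1+h^{2k}C_2$ bound on $|||e_p|||_Q$, and the same argument with the second inf-sup inequality bounds $|||e_r|||_R$, establishing \eqref{estimates1res:sub2}. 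The main obstacle throughout is the careful algebraic bookkeeping of the three trilinear forms, ensuring the correct diagonal terms cancel and every surviving term is either absorbable under \eqref{3c81} or of the advertised order; in particular the one-directional temperature coupling must be resolved first so that the buoyancy contribution can be routed through $|||e_u|||_V$.
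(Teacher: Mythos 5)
Your proposal follows essentially the same route as the paper's own proof: the error equations from Lemma~\ref{lemma21} are tested with the projected errors on the divergence-free subspaces, the antisymmetries of $c_{1h}$ and $c_{3h}$ cancel the diagonal convection terms, the temperature error is bounded through $|||\Pi_1\mathbf{u}-\mathcal{U}_h|||_V$ and substituted into the coupled velocity--magnetic estimate, the quadratic error terms are absorbed via the smallness condition \eqref{3c81} with the trilinear bounds of Lemma~\ref{lemma13} and consistency bounds of Lemma~\ref{lemma18}, and the two pressures are recovered from the inf-sup inequalities of Lemma~\ref{lemma15} with the $h^kC_1+h^{2k}C_2$ split. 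The only (harmless) deviation is that you place the projected exact solution ($\Pi_1\mathbf{u}$, $\Pi_3T$) rather than the discrete solution ($\mathcal{U}_h$, $\mathcal{T}_h$) as the bounded factor in the residual trilinear terms; the paper's splitting is preferable since the discrete solution norms are controlled directly by the data through \eqref{T-3c41*}--\eqref{T-3c42*}, which is exactly the form in which the smallness condition \eqref{3c81} is stated, but your version can be rearranged to the paper's by the same algebra.
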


\begin{proof}
From   (\ref{Tscheme0101*}) and   Lemma \ref{lemma21} we   get the error equations as follows:
\begin{subequations}\label{estimate1}
\begin{align}
&\frac{1}{H_a^2}(\mathcal{G}_h(\Pi_1\mathbf{u}-\mathcal{U}_h),\mathcal{G}_h\mathcal{V}_h)
+
\frac{1}{R_m^2}(\mathcal{K}_h(\Pi_2\mathbf{B}-\mathcal{B}_h),\mathcal{K}_h\mathcal{W}_h)\nonumber\\
&+s_{1h}(\Pi_1\mathbf{u}-\mathcal{U}_h,\mathcal{V}_h)
+s_{2h}(\Pi_2\mathbf{B}-\mathcal{B}_h,\mathcal{W}_h)
+b_{1h}(\mathcal{V}_h,\Pi_4p-\mathcal{P}_h)-b_{1h}(\Pi_1\mathbf{u}-\mathcal{U}_h,\mathcal{Q}_h)\nonumber\\
&
+b_{2h}(\mathcal{W}_h,\Pi_5r-\mathcal{R}_h)-b_{2h}(\Pi_2\mathbf{B}-\mathcal{B}_h,\mathcal{\theta}_h)
+c_{1h}(\Pi_1\mathbf{u};\Pi_1\mathbf{u},\mathcal{V}_h)
-c_{1h}(\mathcal{U}_h;\mathcal{U}_h,\mathcal{V}_h)\nonumber\\
&
+c_{2h}(\mathcal{V}_h;\Pi_2\mathbf{B},\Pi_2\mathbf{B})
-c_{2h}(\mathcal{V}_h;\mathcal{B}_h,\mathcal{B}_h)
-c_{2h}(\Pi_1\mathbf{u};\Pi_2\mathbf{B},\mathcal{W}_h)
+c_{2h}(\mathcal{U}_h;\mathcal{B}_h,\mathcal{W}_h)
+G_{3h}(\Pi_3T-\mathcal{T}_h,\mathcal{V}_h)\nonumber\\
=&
E_u({\mathbf{u},\mathcal{V}_h})+E_B({\mathbf{B},\mathcal{W}_h})
+E_{\tilde{u}}(\mathbf{u},\mathcal{V}_h)
+E_{\tilde{B}1}(\mathcal{B}_h,\Pi_1\mathbf{u}-\mathcal{U}_h)
+E_{\tilde{B}2}(\mathbf{u};\mathbf{B},\Pi_2\mathbf{B}-\mathcal{B}_h),\nonumber\\
&
\quad\quad
\forall(\mathcal{V}_h,\mathcal{W}_h,\mathcal{Q}_h,\mathcal{R}_h)\in[\mathbf{V}_h\times\hat{\mathbf{V}}_h^0]\times[\mathbf{V}_h\times\hat{\mathbf{W}}_h^0]\times [Q_h\times\hat{Q}_h^0]\times [Q_h\times\hat{R}_h^0],
\label{estimate1:sub1}\\
\nonumber\\
&\frac{1}{P_rR_e}(\mathcal{Y}_h(\Pi_3T-\mathcal{T}_h),\mathcal{Y}_h\mathcal{Z}_h)
+s_{3h}(\Pi_3T-\mathcal{T}_h,\mathcal{Z}_h)+c_{3h}(\Pi_1\mathbf{u};\Pi_3T,\mathcal{Z}_h)
-c_{3h}(\mathcal{U}_h;\mathcal{T}_h,\mathcal{Z}_h)\nonumber\\
=&
E_T(T,\mathcal{Z}_h)+E_{\tilde{T}}(\mathbf{u};T,\mathcal{Z}_h), \quad \forall \mathcal{Z}_h\in [Z_h\times\hat{Z}_h^0].
\label{estimate1:sub2}
\end{align}
\end{subequations}
Taking $(\mathcal{V}_h,\mathcal{W}_h,\mathcal{Z}_h,\mathcal{Q}_h,\mathcal{\theta}_h)
=(\Pi_1\mathbf{u}-\mathcal{U}_h,\Pi_2\mathbf{B}-\mathcal{B}_h,\Pi_3T-\mathcal{T}_h,
\Pi_4p-\mathcal{P}_h,\Pi_5r-\mathcal{R}_h)$ in (\ref{estimate1}) and
using the relation $c_{1h}(\Pi_1\mathbf{u};\Pi_1\mathbf{u}-\mathcal{U}_h,\Pi_1\mathbf{u}-\mathcal{U}_h)=0$ and
$c_{3h}(\Pi_3T;\Pi_3T-\mathcal{T}_h,\Pi_3T-\mathcal{T}_h)=0$,
we obtain
\begin{subequations}\label{estimate2}
\begin{align}
&\frac{1}{H_a^2}(\mathcal{G}_h(\Pi_1\mathbf{u}-\mathcal{U}_h),\mathcal{G}_h(\Pi_1\mathbf{u}-\mathcal{U}_h))
+
\frac{1}{R_m^2}(\mathcal{K}_h(\Pi_2\mathbf{B}-\mathcal{B}_h),\mathcal{K}_h(\Pi_2\mathbf{B}-\mathcal{B}_h))\nonumber\\
&
+s_{1h}(\Pi_1\mathbf{u}-\mathcal{U}_h,\Pi_1\mathbf{u}-\mathcal{U}_h)
+s_{2h}(\Pi_2\mathbf{B}-\mathcal{B}_h,\Pi_2\mathbf{B}-\mathcal{B}_h)\nonumber\\
=&
E_u(\mathbf{u},\Pi_1\mathbf{u}-\mathcal{U}_h)+E_B({\mathbf{B},\Pi_2\mathbf{B}-\mathcal{B}_h})
+E_{\tilde{u}}({\mathbf{u},\Pi_1\mathbf{u}-\mathcal{U}_h})
+E_{\tilde{B}1}(\mathcal{B}_h,\Pi_1\mathbf{u}-\mathcal{U}_h)
+E_{\tilde{B}2}(\mathbf{u};\mathbf{B},\Pi_2\mathbf{B}-\mathcal{B}_h)\nonumber\\
&
-c_{1h}(\Pi_1\mathbf{u};\Pi_1\mathbf{u},\Pi_1\mathbf{u}-\mathcal{U}_h)
+c_{1h}(\mathcal{U}_h;\mathcal{U}_h,\Pi_1\mathbf{u}-\mathcal{U}_h)
-c_{2h}(\Pi_1\mathbf{u}-\mathcal{U}_h;\Pi_2\mathbf{B},\Pi_2\mathbf{B})
+c_{2h}(\Pi_1\mathbf{u}-\mathcal{U}_h;\mathcal{B}_h,\mathcal{B}_h)\nonumber\\
&
+c_{2h}(\Pi_1\mathbf{u};\Pi_2\mathbf{B},\Pi_2\mathbf{B}-\mathcal{B}_h)
-c_{2h}(\mathcal{U}_h;\mathcal{B}_h,\Pi_2\mathbf{B}-\mathcal{B}_h)
-G_{3h}(\Pi_3T-\mathcal{T}_h,\Pi_1\mathbf{u}-\mathcal{U}_h)\nonumber\\
=&
E_u(\mathbf{u},\Pi_1\mathbf{u}-\mathcal{U}_h)+E_B({\mathbf{B},\Pi_2\mathbf{B}-\mathcal{B}_h})
+E_{\tilde{u}}({\mathbf{u};\mathbf{u},\Pi_1\mathbf{u}-\mathcal{U}_h})
+E_{\tilde{B}1}(\mathcal{B}_h,\Pi_1\mathbf{u}-\mathcal{U}_h)
+E_{\tilde{B}2}(\mathbf{u};\mathbf{B},\Pi_2\mathbf{B}-\mathcal{B}_h)\nonumber\\
&
-c_{1h}(\Pi_1\mathbf{u}-\mathcal{U}_h;\mathcal{U}_h,\Pi_1\mathbf{u}-\mathcal{U}_h)
-c_{2h}(\mathcal{U}_h;\Pi_2\mathbf{B}-\mathcal{B}_h,\Pi_2\mathbf{B}-\mathcal{B}_h)
-c_{2h}(\Pi_1\mathbf{u}-\mathcal{U}_h;\Pi_2\mathbf{B}-\mathcal{B}_h,\mathcal{B}_h)\nonumber\\
&-G_{3h}(\Pi_3T-\mathcal{T}_h,\Pi_1\mathbf{u}-\mathcal{U}_h),
\label{estimate2:sub1}\\
\nonumber\\
&\frac{1}{P_rR_e}(\mathcal{Y}_h(\Pi_3T-\mathcal{T}_h),\mathcal{Y}_h(\Pi_3T-\mathcal{T}_h))
+s_{3h}(\Pi_3T-\mathcal{T}_h,\Pi_3T-\mathcal{T}_h)\nonumber\\
=&
E_T(T,\Pi_3T-\mathcal{T}_h)+E_{\tilde{T}}(\mathbf{u};T,\Pi_3T-\mathcal{T}_h)
-c_{3h}(\Pi_1\mathbf{u};\Pi_3T,\Pi_3T-\mathcal{T}_h)
+c_{3h}(\mathcal{U}_h;\mathcal{T}_h,\Pi_3T-\mathcal{T}_h)\nonumber\\
=&
E_T(T,\Pi_3T-\mathcal{T}_h)+E_{\tilde{T}}(\mathbf{u};T,\Pi_3T-\mathcal{T}_h)
-c_{3h}(\Pi_1\mathbf{u}-\mathcal{U}_h;\mathcal{T}_h,\Pi_3T-\mathcal{T}_h).
\label{estimate2:sub2}
\end{align}
\end{subequations}
In view of Lemmas \ref{lemma13}, \ref{lemma18}, and the definitions of $M_{ih}(i=1,2,3)$ in \eqref{M1h}-\eqref{M3h}, we further have
\begin{subequations}\label{estimate22}
\begin{align}
&
|||\Pi_1\mathbf{u}-\mathcal{U}_h|||_V
+|||\Pi_2\mathbf{B}-\mathcal{B}_h|||_W\nonumber\\
\leq &
2\zeta C\left( h^k||\mathbf{u}||_{k+1}
+ h^k||\mathbf{B}||_{k+1}
+ h^k\|\mathbf{B}\|_2||\mathbf{B}||_{k+1} + h^k\|\mathbf{u}\|_2||\mathbf{u}||_{k+1}
+h^k\|\mathbf{u}\|_2||\mathbf{B}||_{k+1}+h^k\|\mathbf{B}\|_2||\mathbf{u}||_{k+1}\right)
\nonumber\\
&+2\zeta \left( H_aM_{1h}|||\mathcal{U}_h|||_V|||\Pi_1\mathbf{u}-\mathcal{U}_h|||_V
+R_mM_{2h}|||\mathcal{U}_h|||_V|||\Pi_2\mathbf{B}-\mathcal{B}_h|||_W\right.\nonumber\\
&
\quad +\left.\frac{1}{2}H_aM_{2h}|||\mathcal{B}_h|||_W|||\Pi_1\mathbf{u}-\mathcal{U}_h|||_V
+\frac{1}{2}R_mM_{2h}|||\mathcal{B}_h|||_W|||\Pi_2\mathbf{B}-\mathcal{B}_h|||_W\right)\nonumber\\
&+2\zeta H_aM_{3h}\frac{G_r\mathbf{g}}{NR_eg}|||\Pi_3T-\mathcal{T}_h|||_{Z},
\label{estimate22:sub1}\\
\nonumber\\
&|||\Pi_3T-\mathcal{T}_h|||_Z\nonumber\\
\leq&
C(h^k|T|_{k+1}+h^k\|\mathbf{u}\|_2\|T\|_{k+1}
+h^k\|T\|_2\|\mathbf{u}\|_{k+1})
+M_{3h}|||\Pi_1\mathbf{u}-\mathcal{U}_h|||_1|||\mathcal{T}_h|||_Z\nonumber\\
\leq &C(h^k|T|_{k+1}+h^k\|\mathbf{u}\|_2\|T\|_{k+1}
+h^k\|T\|_2\|\mathbf{u}\|_{k+1})
+M_{3h}P_r^2R_e^2\|f_3\|_{3h}|||\Pi_1\mathbf{u}-\mathcal{U}_h|||_V.
\label{estimate22:sub2}
\end{align}
\end{subequations}
Taking \eqref{estimate22:sub2} in \eqref{estimate22:sub1} we get
\begin{align*}
&
|||\Pi_1\mathbf{u}-\mathcal{U}_h|||_V
+|||\Pi_2\mathbf{B}-\mathcal{B}_h|||_W\nonumber\\
\leq &
2\zeta C\left( h^k||\mathbf{u}||_{k+1}
+ h^k||\mathbf{B}||_{k+1}
+ h^k\|\mathbf{B}\|_2||\mathbf{B}||_{k+1} + h^k\|\mathbf{u}\|_2||\mathbf{u}||_{k+1}
+h^k\|\mathbf{u}\|_2||\mathbf{B}||_{k+1}+h^k\|\mathbf{B}\|_2||\mathbf{u}||_{k+1}\right)
\nonumber\\
&+2\zeta \left( H_aM_{1h}|||\mathcal{U}_h|||_V|||\Pi_1\mathbf{u}-\mathcal{U}_h|||_V
+R_mM_{2h}|||\mathcal{U}_h|||_V|||\Pi_2\mathbf{B}-\mathcal{B}_h|||_W\right.\nonumber\\
&
\quad +\left.\frac{1}{2}H_aM_{2h}|||\mathcal{B}_h|||_W|||\Pi_1\mathbf{u}-\mathcal{U}_h|||_V
+\frac{1}{2}R_mM_{2h}|||\mathcal{B}_h|||_W|||\Pi_2\mathbf{B}-\mathcal{B}_h|||_W\right)\nonumber\\
&+C(h^k|T|_{k+1}+h^k\|\mathbf{u}\|_2\|T\|_{k+1}
+h^k\|T\|_2\|\mathbf{u}\|_{k+1})
+2\zeta H_aM_{3h}P_r^2R_e^2\frac{G_r\mathbf{g}}{NR_eg}\|f_3\|_{3h}|||\Pi_1\mathbf{u}-\mathcal{U}_h|||_V\nonumber\\
\leq &
2\zeta C\left( h^k||\mathbf{u}||_{k+1}
+ h^k||\mathbf{B}||_{k+1}
+ h^k\|\mathbf{B}\|_2||\mathbf{B}||_{k+1} + h^k\|\mathbf{u}\|_2||\mathbf{u}||_{k+1}
+h^k\|\mathbf{u}\|_2||\mathbf{B}||_{k+1}+h^k\|\mathbf{B}\|_2||\mathbf{u}||_{k+1}\right)
\nonumber\\
&+4\zeta^3\left(\|\mathbf{f_1}\|_{1h}+\|\mathbf{f_2}\|_{2h}+\|f_{3}\|_{3h} )
(H_aM_{1h}|||\Pi_1\mathbf{u}-\mathcal{U}_h|||_V
+R_mM_{2h}|||\Pi_2\mathbf{B}-\mathcal{B}_h|||_W\right.\nonumber\\
&\left.+\frac{1}{2}H_aM_{2h}|||\Pi_1\mathbf{u}-\mathcal{U}_h|||_V
+\frac{1}{2}R_mM_{2h}|||\Pi_2\mathbf{B}-\mathcal{B}_h|||_W
\right)\nonumber\\
&+C(h^k|T|_{k+1}+h^k\|\mathbf{u}\|_2\|T\|_{k+1}
+h^k\|T\|_2\|\mathbf{u}\|_{k+1})
+2\zeta H_aM_{3h}P_r^2R_e^2\frac{G_r\mathbf{g}}{NR_eg}\|f_3\|_{3h}|||\Pi_1\mathbf{u}-\mathcal{U}_h|||_V\nonumber\\
\leq&
\max\{M_{1h},M_{2h},M_{3h}\}
\left(12\zeta^4(\|\mathbf{f_1}\|_{1h}+\|\mathbf{f_2}\|_{2h}+\|f_{3}\|_{3h})
+2\zeta^2P_rR_e\|f_3\|_{3h}\right)\nonumber\\
&\times (|||\Pi_1\mathbf{u}-\mathcal{U}_h|||_V+|||\Pi_2\mathbf{B}-\mathcal{B}_h|||_W)\nonumber\\
&+2\zeta C\left( h^k||\mathbf{u}||_{k+1}
+ h^k||\mathbf{B}||_{k+1}
+ h^k\|\mathbf{B}\|_2||\mathbf{B}||_{k+1} + h^k\|\mathbf{u}\|_2||\mathbf{u}||_{k+1}
+h^k\|\mathbf{u}\|_2||\mathbf{B}||_{k+1}+h^k\|\mathbf{B}\|_2||\mathbf{u}||_{k+1}\right)
\nonumber\\
&+C(h^k|T|_{k+1}+h^k\|\mathbf{u}\|_2\|T\|_{k+1}
+h^k\|T\|_2\|\mathbf{u}\|_{k+1}),
\end{align*}
which plus the smallness condition \eqref{3c81}
yields
\begin{align*}
|||\Pi_1\mathbf{u}-\mathcal{U}_h|||_V
+|||\Pi_2\mathbf{B}-\mathcal{B}_h|||_W
\lesssim h^k C_1(\mathbf{u},\mathbf{B},T).
\end{align*}
Hence, combining this   estimate with \eqref{estimate22:sub2} leads to the desired estimate \eqref{estimates1res:sub1}.

Next let us estimate the pressure error.
Taking $(\mathcal{W}_h,\mathcal{Q}_h,\mathcal{R}_h)=(0,0,0)$ in  the equation \eqref{d1:sub1},   we have
\begin{align*}
&\frac{1}{H_a^2}(\mathcal{G}_h(\Pi_1\mathbf{u}-\mathcal{U}_h),\mathcal{G}_h\mathcal{V}_h)
+s_{1h}(\Pi_1\mathbf{u},\mathcal{V}_h)
+b_{1h}(\mathcal{V}_h,\Pi_3p)
+c_{1h}(\Pi_1\mathbf{u};\Pi_1\mathbf{u},\mathcal{V}_h)
+c_{2h}(\mathcal{V}_h;\Pi_2\mathbf{B},\Pi_2\mathbf{B})\\
=&(\mathbf{f_1},\mathbf{v}_h)
+E_u({\mathbf{u},\mathcal{V}_h})
+E_{\tilde{u}}( \mathbf{u},\mathcal{V}_h)-
G_{3h}(\Pi_3T,\mathcal{V}_h) ,
\end{align*}
which, together with   \eqref{Tscheme0101*-a},   gives
\begin{align*}
&b_{1h}(\mathcal{V}_h,\Pi_3p-\mathcal{P}_h)\\ =&E_u({\mathbf{u},\mathcal{V}_h})
+E_{\tilde{u}}(\mathbf{u},\mathcal{V}_h)-G_{3h}(\Pi_3T-\mathcal{T}_h,\mathcal{V}_h)
-
\frac{1}{H_a^2}(\mathcal{G}_h(\Pi_1\mathbf{u}-\mathcal{U}_h),\mathcal{G}_h\mathcal{V}_h)
-s_{1h}(\Pi_1\mathbf{u}-\mathcal{U}_h,\mathcal{V}_h)\\
&-c_{1h}(\Pi_1\mathbf{u};\Pi_1\mathbf{u},\mathcal{V}_h)
+c_{1h}(\mathcal{U}_h;\mathcal{U}_h,\mathcal{V}_h)
-c_{2h}(\mathcal{V}_h;\Pi_2\mathbf{B},\Pi_2\mathbf{B})
+c_{2h}(\mathcal{V}_h;\mathcal{B}_h,\mathcal{B}_h)\\
=&E_u({\mathbf{u},\mathcal{V}_h})
+E_{\tilde{u}}(\mathbf{u},\mathcal{V}_h)-G_{3h}(\Pi_3T-\mathcal{T}_h,\mathcal{V}_h)
-
\frac{1}{H_a^2}(\mathcal{G}_h(\Pi_1\mathbf{u}-\mathcal{U}_h),\mathcal{G}_h\mathcal{V}_h)
-s_{1h}(\Pi_1\mathbf{u}-\mathcal{U}_h,\mathcal{V}_h)\\
& -c_{1h}(\Pi_1\mathbf{u}-\mathcal{U}_h;\Pi_1\mathbf{u}-\mathcal{U}_h,\mathcal{V}_h)-c_{1h}(\mathcal{U}_h;\Pi_1\mathbf{u}-\mathcal{U}_h,\mathcal{V}_h)
-c_{1h}(\Pi_1\mathbf{u}-\mathcal{U}_h;\mathcal{U}_h,\mathcal{V}_h)\\
&
-c_{2h}(\mathcal{V}_h;\Pi_2\mathbf{B}-\mathcal{B}_h,\Pi_2\mathbf{B}-\mathcal{B}_h)
-c_{2h}(\mathcal{V}_h;\mathcal{B}_h,\Pi_2\mathbf{B}-\mathcal{B}_h)
-c_{2h}(\mathcal{V}_h;\Pi_2\mathbf{B}-\mathcal{B}_h,\mathcal{B}_h).
\end{align*}
Thus, using the inf-sup condition (\ref{T-inf-sup-bh}), Lemmas \ref{lemma13} and  \ref{lemma18}, and the estimate \eqref{estimates1res:sub1}, we get
\begin{align*}
|||\Pi_3p-\mathcal{P}_h|||_Q\lesssim&\sup_{0\neq\mathcal{V}_h\in\mathcal{V}_h^0}
\frac{b_{1h}(\mathcal{V}_h,\Pi_3p-\mathcal{P}_h)}{|||\mathcal{V}_h|||_V}\\
\lesssim&   h^k \left(\|\mathbf{u}\|_{k+1} +\|\mathbf{B}\|_{k+1}+\|T\|_{k+1}\right)\left(1+||\mathbf{u}||_2+||\mathbf{B}||_2+||T||_2\right)\\
&+h^{2k} \left(\|\mathbf{u}\|_{k+1} +\|\mathbf{B}\|_{k+1}\right)^2 \left(1+||\mathbf{u}||_2+||\mathbf{B}||_2\right)^2\\
\lesssim& h^k C_1(\mathbf{u},\mathbf{B},T)
+h^{2k}C_2(\mathbf{u},\mathbf{B}) .
\end{align*}
Similarly,
by using the inf-sup condition (\ref{T-inf-sup-tildebh}), Lemmas \ref{lemma13} and  \ref{lemma18}, and  \eqref{estimates1res:sub1}, we can obtain
$$ |||\Pi_4r-\mathcal{R}_h|||_R\lesssim h^k C_1(\mathbf{u},\mathbf{B},T)
+h^{2k}C_2(\mathbf{u},\mathbf{B}) .$$
Combining the above two inequalities leads to the desired result \eqref{estimates1res:sub2}. This finishes the proof.
\end{proof}

In light of  Theorem \ref{estimates1},  Lemmas \ref{lemma3}, \ref{T-lemma1*}, \ref{lemma9} and \ref{lemma10}, and  the triangle inequality,   we can finally obtain the following main error estimates.
\begin{myTheo}\label{estimates2}
Under the same conditions of Theorem \ref{estimates1}, there hold
\begin{subequations}\label{estimates2res}
\begin{align}
&
\|\nabla\mathbf{u}-\nabla_h\mathbf{u}_{h}\|_0+H_a^2\|\bm{L}-\bm{L}_h\|_0
\lesssim h^kC_1(\mathbf{u},\mathbf{B},T),
\label{estimates2res:sub1}\\
&
\|\nabla\times\mathbf{B}-\nabla_h\times\mathbf{B}_{h}\|_0
+R_m^2\|\bm{N}-\bm{N}_h\|_0
\lesssim h^kC_1(\mathbf{u},\mathbf{B},T),
\label{estimates2res:sub2}\\
&\|\nabla
T-\nabla_hT_{h}\|_0+
P_rR_e\|\bm{A}-\bm{A}_h\|_0
\lesssim h^kC_1(\mathbf{u},\mathbf{B},T),
\label{estimates2res:sub3}\\
&\|p-p_{h}\|_0
\lesssim h^kC_1(\mathbf{u},\mathbf{B},T)+h^k\|p\|_k+h^{2k}C_2(\mathbf{u},\mathbf{B}),
\label{estimates2res:sub4}\\
&\|r-\bar{r}_h-(\bar r -\bar{r}_h)\|_0
\lesssim h^kC_1(\mathbf{u},\mathbf{B},T)+h^k\|r\|_k+h^{2k}C_2(\mathbf{u},\mathbf{B}),
\label{estimates2res:sub5}
\end{align}
\end{subequations}
where $\bar r$ and $\bar{r}_h$ denote the mean values of $r$ and $\bar{r}_h$ on $\Omega$, respectively.
\end{myTheo}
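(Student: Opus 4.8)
The plan is to reduce every error on the left of \eqref{estimates2res} to two contributions by inserting the projections $\Pi_1\mathbf{u},\Pi_2\mathbf{B},\Pi_3T,\Pi_4p,\Pi_5r$ (and the $L^2$-projections $\Pi_m^u\bm{L},\Pi_m^B\bm{N},\Pi_m^T\bm{A}$) and applying the triangle inequality: a \emph{projection error}, governed purely by the approximation Lemmas \ref{T-lemma1*}, \ref{lemma9} and \ref{lemma10}, and a \emph{discrete error}, i.e. the distance between the projection and the HDG solution, which is exactly what Theorem \ref{estimates1} bounds. The only structural ingredient needed to pass from the triple-bar norms of Theorem \ref{estimates1} to the broken Sobolev norms in \eqref{estimates2res} is the norm equivalence of Lemma \ref{lemma3}, which yields $\|\nabla_h\mathbf{v}_h\|_0\lesssim|||\mathcal{V}_h|||_V$, $\|\nabla_h\times\mathbf{w}_h\|_0\lesssim|||\mathcal{W}_h|||_W$ and $\|\nabla_hz_h\|_0\lesssim|||\mathcal{Z}_h|||_Z$, used in combination with the fact that the interior components of $\Pi_1\mathbf{u},\Pi_2\mathbf{B},\Pi_3T$ are $\mathbf{P}_k^{\mathcal{RT}}\mathbf{u},\mathbf{P}_k^{\mathcal{RT}}\mathbf{B},Q_k^oT$.

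For the velocity I would write $\nabla\mathbf{u}-\nabla_h\mathbf{u}_h=(\nabla\mathbf{u}-\nabla_h\mathbf{P}_k^{\mathcal{RT}}\mathbf{u})+\nabla_h(\mathbf{P}_k^{\mathcal{RT}}\mathbf{u}-\mathbf{u}_h)$, bound the first term by $h^k|\mathbf{u}|_{k+1}$ using the $H^1$-estimate of Lemma \ref{lemma10}, and bound the second by $|||\Pi_1\mathbf{u}-\mathcal{U}_h|||_V\lesssim h^kC_1$ via Lemma \ref{lemma3} and \eqref{estimates1res:sub1}; the curl error for $\mathbf{B}$ and the gradient error for $T$ are treated identically with $|||\cdot|||_W$, $|||\cdot|||_Z$ and the respective parts of \eqref{estimates1res:sub1}. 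For the auxiliary variables I would exploit the scheme relations $\bm{L}_h=\tfrac{1}{H_a^2}\mathcal{G}_h\mathcal{U}_h$, $\bm{N}_h=\tfrac{1}{R_m^2}\mathcal{K}_h\mathcal{B}_h$, $\bm{A}_h=\tfrac{1}{P_rR_e}\mathcal{Y}_h\mathcal{T}_h$ together with their projection analogues \eqref{d1:sub1}, \eqref{d1:sub3}, \eqref{d1:sub5}; for instance $H_a^2(\bm{L}-\bm{L}_h)=(\nabla\mathbf{u}-\Pi_m^u\nabla\mathbf{u})+\mathcal{G}_h(\Pi_1\mathbf{u}-\mathcal{U}_h)$, whose first piece is an $L^2$-projection error bounded by $h^k|\mathbf{u}|_{k+1}$ through Lemma \ref{T-lemma1*} and whose second piece is again controlled by $|||\Pi_1\mathbf{u}-\mathcal{U}_h|||_V$. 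The analogous identities for $\bm{N}$ and $\bm{A}$ then give \eqref{estimates2res:sub2} and \eqref{estimates2res:sub3}.

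For the pressure I would split $p-p_h=(p-Q_{k-1}^op)+(Q_{k-1}^op-p_h)$, estimate $\|p-Q_{k-1}^op\|_0\lesssim h^k\|p\|_k$ by Lemma \ref{T-lemma1*} and $\|Q_{k-1}^op-p_h\|_0\le|||\Pi_4p-\mathcal{P}_h|||_Q\lesssim h^kC_1+h^{2k}C_2$ by \eqref{estimates1res:sub2}, which gives \eqref{estimates2res:sub4}; the pseudo-pressure \eqref{estimates2res:sub5} follows the same pattern with $|||\cdot|||_R$ and $\Pi_5r$. I expect the only delicate point to be the pseudo-pressure, since $|||\cdot|||_R$ measures deviation from the mean, so the bound can only control $r$ modulo constants; this is precisely why the mean-value correction $r-\bar r_h-(\bar r-\bar r_h)$ appears in the statement, and I would handle it by subtracting the elementwise means before invoking the norm equivalence, after which the argument is routine. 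Everything else is a bookkeeping combination of the triangle inequality with the already-established bounds, so no genuinely new estimate is required.
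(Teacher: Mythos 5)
Your proposal is correct and follows essentially the same route as the paper, which gives no detailed proof but derives Theorem \ref{estimates2} precisely from the ingredients you use: the triangle inequality after inserting the projections, the approximation bounds of Lemmas \ref{T-lemma1*}, \ref{lemma9} and \ref{lemma10}, the norm relations of Lemma \ref{lemma3} (equivalently Lemma \ref{lemma7}), and the discrete-error bounds of Theorem \ref{estimates1}, with the identities $H_a^2\bm{L}_h=\mathcal{G}_h\mathcal{U}_h$, $R_m^2\bm{N}_h=\mathcal{K}_h\mathcal{B}_h$, $P_rR_e\bm{A}_h=\mathcal{Y}_h\mathcal{T}_h$ and \eqref{d1:sub1}, \eqref{d1:sub3}, \eqref{d1:sub5} handling the auxiliary variables exactly as you describe. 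Your reading of the pseudo-pressure estimate is also the intended one: since $|||\cdot|||_R$ controls $\theta_h$ only modulo its mean, the bound \eqref{estimates2res:sub5} is for the mean-corrected error $(r-\bar r)-(r_h-\bar r_h)$, and your mean-subtraction argument closes that point.
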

\begin{rem}
From the estimates \eqref{estimates2res:sub1} and \eqref{estimates2res:sub2} we see that the upper bounds of the errors of the velocity  and the magnetic field are  independent of the
approximations of the	pressure and  the magnetic pseudo-pressure. This means that our HDG scheme is pressure-robust.
\end{rem}

\section{Numerical examples}

In this section, we give a  2D  numerical example and a 3D example   to verify the performance
of the HDG scheme  (\ref{Tscheme0101*}) for the steady incompressible MHD flow \eqref{mhd1}.

Since  (\ref{Tscheme0101*}) is a nonlinear scheme, we apply the  following Oseen iterative algorithm to solve it:   
given the initial guess
$(\mathbf{u}_{h}^0,\mathbf{B}_{h}^0)=(0,0)$,
find
$(\bm{L}_h^{n},\mathcal{U}_h^{n},\bm{N}_h^{n},\mathcal{B}_h^{n},\bm{A} _h^{n},\mathcal{T}_h^{n},\mathcal{P}_h^{n},\mathcal{R}_h^{n})\in\mathbf{D}_h\times[\mathbf{V}_h\times\hat{\mathbf{V}}_h^0]\times\mathbf{C}_h\times[\mathbf{V}_h\times\hat{\mathbf{W}}_h^0]\times\mathbf{S}_h\times[Z_h\times\hat{Z}_h^0]\times[Q_h\times\hat{Q}_h^0]\times[Q_h\times\hat{R}_h^0]$  for $n=1,2, \cdots$,
such that
\begin{small}
 \begin{subequations}\label{scheme0101*}
  \begin{align}
   a^L(\bm{L}_h^{n},\bm{J}_h)- a_{1h}(\mathcal{U}_h^{n},\bm{J}_h)& =0,
   \label{scheme0101*-a}\\
   a_{1h}(\mathcal{V}_h,\bm{L}_h^{n})
   +s_{1h}(\mathcal{U}_h^{n},\mathcal{V}_h)
   +b_{1h}(\mathcal{V}_h,\mathcal{P}_h^{n})
   +c_{1h}(\mathcal{U}_h^{n-1};\mathcal{U}_h^{n},\mathcal{V}_h)
   +c_{2h}(\mathcal{V}_h;\mathcal{B}_h^{n-1},\mathcal{B}_h^{n})
   &=(\mathbf{f}_1,\mathbf{v}_h)- G_{3h}(T_{h}^{n},\mathbf{v}_{h}), \label{scheme0101*-b}\\
   b_{1h}(\mathcal{U}_h^{n},\mathcal{Q}_h)&=0
   , \label{scheme0101*-c}\\
   a^N(\bm{N}_h^{n},\bm{I}_h)- a_{2h}(\mathcal{B}_h^{n},\bm{I}_h)& =0,
   \label{scheme0101*-d}\\
   a_{2h}(\mathcal{W}_h,\bm{N}_h^{n})
   +
   s_{2h}(\mathcal{B}_h^{n},\mathcal{W}_h)
   +b_{2h}(\mathcal{W}_h,\mathcal{R}_h^{n})
   -c_{2h}(\mathcal{U}_h^{n-1};\mathcal{B}_h^{n},\mathcal{W}_h)
   &= \frac{1}{R_m}(\mathbf{f}_2,\mathbf{w}_{h}), \label{scheme0101*-e}\\
   b_{2h}(\mathcal{B}_h^{n},\mathcal{\theta}_h)&=0, \quad \label{scheme0101*-f}\\
   a^A(\bm{A}_h^{n},\bm{E}_h)- a_{3h}(\mathcal{T}_h^{n},\bm{E}_h)& =0,
   \label{scheme0101*-g}\\
   a_{3h}(\mathcal{Z}_h,\bm{A}_h^{n})
   +s_{3h}(\mathcal{T}_h^{n},\mathcal{Z}_h)+c_{3h}(\mathcal{U}_h^{n-1};\mathcal{T}_h^{n},\mathcal{Z}_h)
   &=(f_3,z_h),\label{scheme0101*-h}
  \end{align}
 \end{subequations}
\end{small} 
 for all 
$(\bm{J}_h,\mathcal{V}_h,I_h,\mathcal{W}_h,E_h,\mathcal{Z}_h,\mathcal{Q}_h,\mathcal{\theta}_h)\in\mathbf{D}_h\times[\mathbf{V}_h\times\hat{\mathbf{V}}_h^0]\times\mathbf{C}_h\times[\mathbf{V}_h\times\hat{\mathbf{W}}_h^0]\times\mathbf{S}_h\times[Z_h\times\hat{Z}_h^0]\times[Q_h\times\hat{Q}_h^0]\times[Q_h\times\hat{R}_h^0]$.
In all numerical experiments we adopt the stop criterion
$$\|\mathcal{U}_h^n-\mathcal{U}_h^{n-1}\|_0< 1e-8 $$.

\begin{figure}[htbp]
\centering
\subfigure[]
{\includegraphics[height=3.5cm,width=5cm]{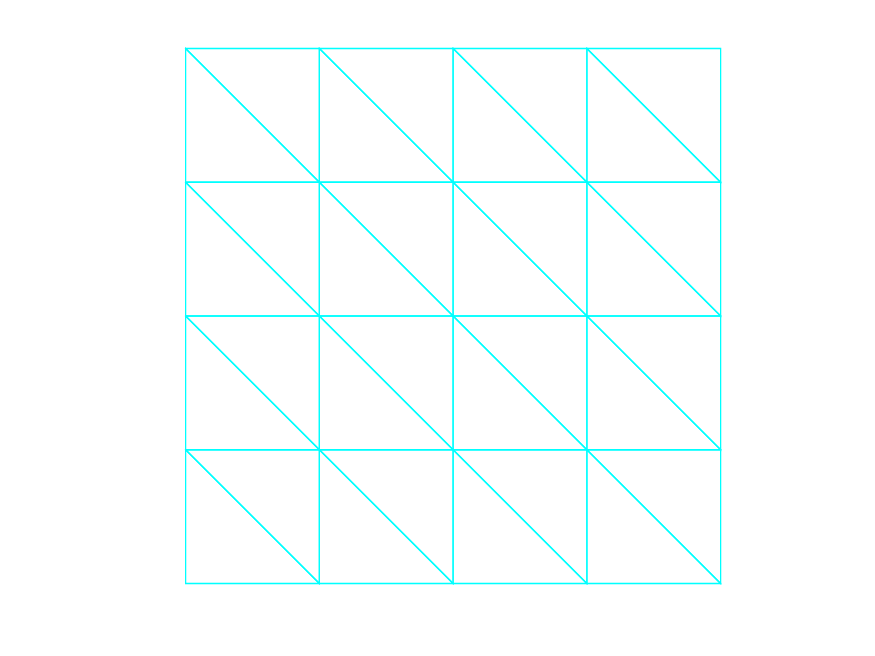}}
\quad 
\subfigure[] 
{\includegraphics[height=3.8cm,width=5cm]{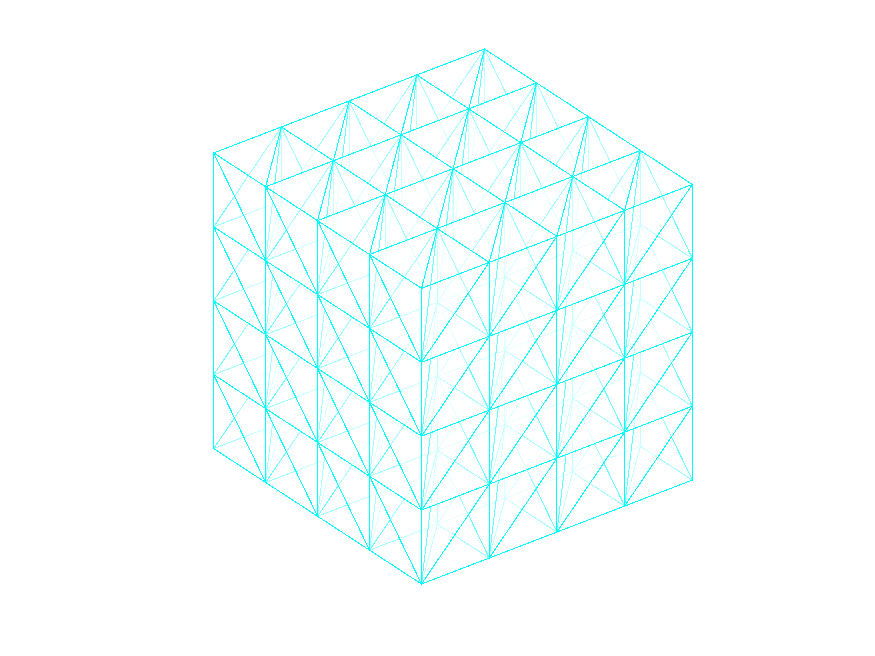}}
\caption{The meshes:  $(a) $ $4\times4$ mesh for $\Omega=[0,1]^2$; $(b)$ $4\times4\times4$ mesh for $\Omega=[0,1]^3$.}

\label{fig:mesh1}

\end{figure}

\begin{exam}[A 2D case] \label{exam1}
In the   model   \eqref{mhd1} we set
$$\Omega=[0,1]^2, H_a=N= R_m=1.$$
The exact solution $(\mathbf{u}, \mathbf{B}, T, p, r)$ is of the form
\begin{eqnarray*}
\left\{\begin {array}{lll}
u_1=-x^2(x-1)^2y(y-1)(2y-1),\\
u_2=y^2(y-1)^2x(x-1)(2x-1),\\
B_1=-x^2(x-1)^2y(y-1)(2y-1),\\
B_2=y^2(y-1)^2x(x-1)(2x-1),\\
p=x(x-1)(x-1/2)y(y-1)(y-1/2),\\
r=x(x-1)(x-1/2)y(y-1)(y-1/2),\\
T=x(x-1)y(y-1).\\
\end{array}\right.
\end{eqnarray*}
We compute  the   scheme (\ref{Tscheme0101*}) with $k=1,2$ on   $M\times M$ uniform regular triangular meshes (cf. Figure \ref{fig:mesh1}) with $M=4,8,16,32,64$.
Numerical results are listed in Tables \ref{linetableK=1(l=1)for1} to \ref{linetableK=2(l=2)for1}.
\end{exam}

\begin{exam}[A 3D case] \label{exam2}
In the   model  \eqref{mhd1} we set
$$\Omega=[0,1]^3, H_a= N= R_m=1.$$
The exact solution $(\mathbf{u}, \mathbf{B}, T, p, r)$ is of the form
\begin{eqnarray*}
\left\{\begin {array}{lll}
u_1=-1/20\pi(\sin(\pi x))^2\sin(\pi y)
\cos(\pi y)\sin(\pi z)
\cos(\pi z),\\
u_2=1/10\pi\sin(\pi x)\cos(\pi x)
(\sin(\pi y))^2\sin(\pi z)
\cos(\pi z),\\
u_3=-1/20\pi\sin(\pi x)\cos(\pi x)
\sin(\pi y)\cos(\pi y)
(\sin(\pi z))^2,\\
B_1=-1/20\pi(\sin(\pi x))^2\sin(\pi y)
\cos(\pi y)\sin(\pi z)
\cos(\pi z),\\
B_2=1/10\pi\sin(\pi x)\cos(\pi x)
(\sin(\pi y))^2\sin(\pi z)
\cos(\pi z),\\
B_3=-1/20\pi\sin(\pi x)\cos(\pi x)
\sin(\pi y)\cos(\pi y)
(\sin(\pi z))^2,\\
P=1/10\cos(\pi x)\cos(\pi y)\cos(\pi z),\\
r=1/10\sin(\pi x)\sin(\pi y)\sin(\pi z),\\
T=u_1+u_2+u_3.
\end{array}\right.
\end{eqnarray*}
We compute the   scheme (\ref{Tscheme0101*}) with $k=1,2$  on   $M\times M\times M$  uniform regular tetrahedral meshes (cf. Figure \ref{fig:mesh1}) with $M=4,8,16,32$.
Numerical results are listed in Tables \ref{linetableK=1(l=1)for2}  to \ref{linetableK=2(l=2)for2}.
\end{exam}

Tables \ref{linetableK=1(l=1)for1} to \ref{linetableK=2(l=2)for2} show the histories of convergence for the velocity $\mathbf{u}_{h}$, the magnetic field $\mathbf{B}_{h}$, the pressure $p_{h}$, 
the temperature $ T_h $
and the magnetic pseudo-pressure $r_h$. Results of $  ||\nabla\cdot\mathbf{u}_{h}||_{0,\infty,\Omega}$
and $ ||\nabla\cdot\mathbf{B}_{h}||_{0,\infty,\Omega}$
are also listed  to verify the divergence-free property. From the numerical results of the two examples, we have the following observations:
\begin{itemize}
\item The convergence rates of $\|\nabla\mathbf{u}-\nabla_h\mathbf{u}_{h}\|_0,$
$\|\nabla\times\mathbf{B}-\nabla_h\times\mathbf{B}_{h}\|_0, $
$\|\nabla T-\nabla_hT_{h}\|_0,$ $\|\bm{L}-\bm{L}_h\|_0$, $\|\bm{N}-\bm{N}_h\|_0$, $\|\bm{A}-\bm{A}_h\|_0$,
$ \|p-p_{h}\|_0$ and  $ \|r-r_h\|_0$
for the  HDG scheme with $k=1,2$ are of
$k^{th}$ orders, which are   consistent with the established theoretical
results in   Theorem \ref{estimates2}.

\item The convergence rates of $\|\mathbf{u}-\mathbf{u}_{h}\|_0$,
$\|\mathbf{B}-\mathbf{B}_{h}\|_0$,
and $\|T-T_{h}\|_0$ are of $(k+1)^{th}$ orders.

\item  
The discrete velocity and the discrete magnetic field  are globally divergence-free.
\end{itemize}

\begin{table}
\normalsize
\caption{Convergence results with  $k = 1$ for Example 1: $M\times M$ meshes }\label{linetableK=1(l=1)for1}
\centering
\footnotesize
{
\begin{tabular}{p{1.0cm}<{\centering}|p{1.8cm}<{\centering}|p{0.8cm}<{\centering}|p{1.8cm}<{\centering}|p{0.8cm}<{\centering}|p{1.8cm}<{\centering}|p{0.8cm}<{\centering}|p{1.8cm}<{\centering}}
\hline
\multirow{2}{*}{M}&
\multicolumn{2}{c|}{$\frac{\|\mathbf{u}-\mathbf{u}_{h}\|_0}{\|\mathbf{u\|_0}}$ }
&\multicolumn{2}{c|}{$\frac{\|\bm{L}-\bm{L}_h\|_0}{\|\bm{L}\|_0}$}
&\multicolumn{2}{c|}{$\frac{\|\nabla\mathbf{u}-\nabla_h\mathbf{u}_{h}\|_0}{\|\nabla\mathbf{u}\|_0}$}
&\multirow{2}{*}{$||\nabla\cdot\mathbf{u}_{h}||_{0,\infty,\Omega}$}\cr\cline{2-7}
&error&order&error&order&error&order\cr
\cline{1-8}

$4 $  & 5.6717e-01 &  -  & 5.1166e-01 &  -  
 &  6.4278e-01 & - 
&3.8026e-17\\
\hline
$8 $  & 1.5224e-01 &  1.89  & 2.7237e-01  & 0.91  &   3.2258e-01 &  0.99 
&  1.0408e-17\\
\hline
$16 $ &  3.9918e-02 &  1.93 &  1.3841e-01  & 0.97 &  1.6098e-01 &  1.00 
&  2.0239e-17\\
\hline
$32 $ &  1.0236e-02 &  1.96  & 6.9404e-02 &  0.99  &  8.0401e-02 & 1.00  
& 1.8400e-18\\
\hline
$64 $  & 2.5909e-03 &  1.98  & 3.4720e-02 &  1.00  &  4.0183e-02  &   1.00  
& 1.2251e-16\\
\hline

\end{tabular}
}

{
\begin{tabular}{p{1.0cm}<{\centering}|p{1.8cm}<{\centering}|p{0.8cm}<{\centering}|p{1.8cm}<{\centering}|p{0.8cm}<{\centering}|p{1.8cm}<{\centering}|p{0.8cm}<{\centering}|p{1.8cm}<{\centering}}
\hline
\multirow{2}{*}{M}&
\multicolumn{2}{c|}{$\frac{\|\mathbf{B}-\mathbf{B}_{h}\|_0}{\|\mathbf{B\|_0}}$ }
&\multicolumn{2}{c|}{$\frac{\|\bm{N}-\bm{N}_h\|_0}{\|\bm{N}\|_0}$}
&\multicolumn{2}{c|}{$\frac{\|\nabla\times\mathbf{B}-\nabla_h\times\mathbf{B}_{h}\|_0}{\|\nabla\times\mathbf{B}\|_0}$}
&\multirow{2}{*}{$||\nabla\cdot\mathbf{B}_{h}||_{0,\infty,\Omega}$}\cr\cline{2-7}
&error&order&error&order&error&order\cr
\cline{1-8}
$4 $  & 1.1606e-00  & - &  5.6441e-01  & - 
&8.2061e-01 &-
&  1.2266e-17\\
\hline
$8 $ &  2.9482e-01  & 1.97 &  2.7675e-01  & 1.02  &3.7929e-01  &1.11
& 4.9065e-18\\
\hline
$16 $ &  7.4064e-02  & 1.99 &  1.3198e-01 &  1.06  &1.6468e-01 &1.20
&  1.2880e-17\\
\hline
$32 $  & 1.8525e-02  & 1.99 &  6.4624e-02  & 1.03 &7.5900e-02 &1.11
&  1.4812e-16\\
\hline
$64 $  & 4.6309e-03  & 2.00 &  3.2115e-02 &  1.00 &3.6869e-02 &1.04
& 6.5426e-16\\
\hline

\end{tabular}
}
{
\begin{tabular}{p{0.5cm}<{\centering}|p{1.5cm}<{\centering}|p{0.5cm}<{\centering}|p{1.5cm}<{\centering}|p{0.5cm}<{\centering}
|p{1.5cm}<{\centering}|p{0.5cm}<{\centering}|p{1.5cm}<{\centering}|p{0.5cm}<{\centering}
|p{1.5cm}<{\centering}|p{0.5cm}<{\centering}}
\hline
\multirow{2}{*}{M}&
\multicolumn{2}{c|}{$\frac{\|T-T_{h}\|_0}{\|T\|_0}$  }
&\multicolumn{2}{c|}{$\frac{\|\bm{A}-\bm{A}_h\|_0}{\|\bm{A}\|_0}$}
&\multicolumn{2}{c|}{$\frac{\|\nabla T-\nabla_h T_{h}\|_0}{\|\nabla T\|_0}$}
&\multicolumn{2}{c|}{$\frac{\|p-p_{h}\|_0}{\|p\|_0}$}
&\multicolumn{2}{c}{$\frac{\|r-r_h\|_0}{\|r\|_0}$}\cr\cline{2-11}
&error&order&error&order&error&order&error&order&error&order\cr
\cline{1-11}
$4 $   &2.0610e-01  & -  & 2.9184e-01 &  -
&3.3214e-01  &-
&  3.8537e-00 &  - &  4.7146e-00 &  -\\
\hline
$8 $  & 5.2226e-02  & 1.98 &  1.4848e-01  & 0.97 &1.6220e-01 &1.03
&   2.1460e-00  & 0.84 &  2.8776e-00  &  0.71\\
\hline
$16 $ & 1.3101e-02  & 1.99  & 7.4559e-02 &  0.99 &8.0451e-02 &1.01
&   9.6509e-01  & 1.15  & 1.4532e-01 &   0.98\\
\hline
$32 $ & 3.2779e-03 &  1.99   &3.7319e-02  & 0.99 &4.0135e-02 &1.00
&   4.1801e-01 &  1.20 &  7.2538e-01  &  1.00\\
\hline
$64 $  & 8.1963e-04 &  2.00 &  1.8664e-02 &  1.00  &2.0056e-02 &1.00
&    1.9086e-01  & 1.13  & 3.6249e-02  &  1.00\\
\hline

\end{tabular}
}

\end{table}

\begin{table}[H]
\normalsize
\caption{Convergence results with  $k = 2$ for Example 1: $M\times M$ meshes }\label{linetableK=2(l=2)for1}
\centering
\footnotesize
{
\begin{tabular}{p{1.0cm}<{\centering}|p{1.8cm}<{\centering}|p{0.8cm}<{\centering}|p{1.8cm}<{\centering}|p{0.8cm}<{\centering}|p{1.8cm}<{\centering}|p{0.8cm}<{\centering}|p{1.8cm}<{\centering}}
\hline
\multirow{2}{*}{M}&
\multicolumn{2}{c|}{$\frac{\|\mathbf{u}-\mathbf{u}_{h}\|_0}{\|\mathbf{u\|_0}}$ }
&\multicolumn{2}{c|}{$\frac{\|\bm{L}-\bm{L}_{h}\|_0}{\|\bm{L}\|_0}$}
&\multicolumn{2}{c|}{$\frac{\|\nabla\mathbf{u}-\nabla_h\mathbf{u}_{h}\|_0}{\|\nabla\mathbf{u}\|_0}$}
&\multirow{2}{*}{$||\nabla\cdot\mathbf{u}_{h}||_{0,\infty,\Omega}$}\cr\cline{2-7}
&error&order&error&order&error&order\cr
\cline{1-8}

$4 $  & 5.6888e-02&   - &  1.3014e-01 &  - 
&2.4354e-01 &-
&  3.7691e-17\\
\hline
$8 $ &  7.3768e-03 &  2.94  & 3.4894e-02 &  1.89  &6.2798e-02&1.95
& 1.3042e-16 \\
\hline
$16 $  & 9.3177e-04 &  2.98  & 8.9496e-03 &  1.96 &1.5637e-02&2.00
& 4.1064e-17 \\
\hline
$32 $ &  1.1713e-04 &  2.99  & 2.2591e-03&   1.98  &3.8806e-03&2.01
 & 1.6195e-16  \\
\hline
$64 $  & 1.4693e-05  & 2.99 &  5.6704e-04  & 1.99 &9.6538e-04&2.00
 &  1.0799e-15  \\
\hline

\end{tabular}
}

{
\begin{tabular}{p{1.0cm}<{\centering}|p{1.8cm}<{\centering}|p{0.8cm}<{\centering}|p{1.8cm}<{\centering}|p{0.8cm}<{\centering}|p{1.8cm}<{\centering}|p{0.8cm}<{\centering}|p{1.8cm}<{\centering}}
\hline
\multirow{2}{*}{M}&
\multicolumn{2}{c|}{$\frac{\|\mathbf{B}-\mathbf{B}_{h}\|_0}{\|\mathbf{B\|_0}}$ }
&\multicolumn{2}{c|}{$\frac{\|\bm{N}-\bm{N}_{h}\|_0}{\|\bm{N}\|_0}$}
&\multicolumn{2}{c|}{$\frac{\|\nabla\times\mathbf{B}-\nabla_h\times\mathbf{B}_{h}\|_0}{\|\nabla\times\mathbf{B}\|_0}$}
&\multirow{2}{*}{$||\nabla\cdot\mathbf{B}_{h}||_{0,\infty,\Omega}$}\cr\cline{2-7}
&error&order&error&order&error&order\cr
\cline{1-8}
$4 $ & 1.9191e-01  & - &  1.2959e-01 &  - 
&4.3715e-01 &-
&  2.7346e-17\\
\hline
$8 $  & 3.2161e-02 &  2.57  & 2.8450e-02  & 2.18 &1.3871e-01 &1.66
&   4.8258e-17\\
\hline
$16 $ &  4.5933e-03  & 2.80  &7.0130e-03 &  2.02  & 3.8351e-02 &1.85
&   5.4459e-17\\
\hline
$32 $ &  6.0891e-04&   2.91 &  1.7591e-03  & 1.99   & 9.9904e-03  &1.94
& 2.9157e-16\\
\hline
$64 $ &  7.8185e-05 &  2.96  & 4.4098e-04  & 1.99  &2.5432e-03 &1.97
&    1.0700e-15\\
\hline

\end{tabular}
}
{
\begin{tabular}{p{0.5cm}<{\centering}|p{1.5cm}<{\centering}|p{0.5cm}<{\centering}|p{1.5cm}<{\centering}|p{0.5cm}<{\centering}
		|p{1.5cm}<{\centering}|p{0.5cm}<{\centering}|p{1.5cm}<{\centering}|p{0.5cm}<{\centering}
		|p{1.5cm}<{\centering}|p{0.5cm}<{\centering}}
	\hline
	\multirow{2}{*}{M}&
	\multicolumn{2}{c|}{$\frac{\|T-T_{h}\|_0}{\|T\|_0}$  }
	&\multicolumn{2}{c|}{$\frac{\|\bm{A}-\bm{A}_h\|_0}{\|\bm{A}\|_0}$}
	&\multicolumn{2}{c|}{$\frac{\|\nabla T-\nabla_h T_{h}\|_0}{\|\nabla T\|_0}$}
	&\multicolumn{2}{c|}{$\frac{\|p-p_{h}\|_0}{\|p\|_0}$}
	&\multicolumn{2}{c}{$\frac{\|r-r_h\|_0}{\|r\|_0}$}\cr\cline{2-11}
	&error&order&error&order&error&order&error&order&error&order\cr
	\cline{1-11}
$4 $  & 1.2679e-02  & -&   4.1549e-02  & -
&8.3060e-02   &-
& 1.0551e-00 &  -  & 1.3377e-00 &  -\\
\hline
$8 $  & 1.5529e-03   &3.02 &  1.0588e-02 &  1.97  &2.0600e-02   &2.01
&  1.9968e-01 &  2.40  & 3.3459e-01 &  1.99\\
\hline
$16 $  & 1.9229e-04  & 3.01 &  2.6637e-03 &  1.99  &5.1403e-03  &2.00
&  4.3435e-02  & 2.20   &8.5809e-02  & 1.96\\
\hline
$32 $  & 2.3940e-05  & 3.00 &  6.6753e-04 &  1.99    &1.2851e-03 &2.00
&  1.0263e-02 &  2.08 &  2.1581e-02  & 1.99\\
\hline
$64 $  & 2.9871e-06  & 3.00 &  1.6705e-04 &  1.99   &3.2137e-04  &1.99
& 2.5002e-03 &  2.03 &  5.3937e-03 &  2.00\\
\hline

\end{tabular}
}

\end{table}


\begin{table}[H]\label{linetable7}
\normalsize
\caption{Convergence results with  $k = 1$ for Example 2: $M\times M\times M$ meshes }\label{linetableK=1(l=1)for2}
\centering
\footnotesize
{
\begin{tabular}{p{2.0cm}<{\centering}|p{1.9cm}<{\centering}|p{1.1cm}<{\centering}|p{1.9cm}<{\centering}
|p{1.1cm}<{\centering}|p{1.9cm}<{\centering}}
\hline
\multirow{2}{*}{M}&
\multicolumn{2}{c|}{$\frac{\|\mathbf{u}-\mathbf{u}_{h}\|_0}{\|\mathbf{u\|_0}}$ }&\multicolumn{2}{c|}{$\frac{\|\nabla\mathbf{u}-\nabla_h\mathbf{u}_{h}\|_0}{\|\nabla\mathbf{u}\|_0}$}
&\multirow{2}{*}{$||\nabla\cdot\mathbf{u}_{h}||_{0,\infty,\Omega}$}\cr\cline{2-5}
&error&order&error&order\cr
\cline{1-6}

$4$ & 8.4484e-01 &  - &  1.3080e+00  & -&  2.2644e-15   \\
\hline
$8$ &  2.3984e-01 &  1.82 &  6.5791e-01  & 1.00 &  4.0873e-17 \\
\hline
$16$  & 5.8894e-02 &  2.03  & 2.9664e-01 & 1.15  & 5.7086e-17 \\
\hline
$32$ &  1.4583e-02 &  2.01 &  1.4088e-01  & 1.07 &  1.9761e-15\\
\hline

\end{tabular}
}

{
\begin{tabular}{p{2.0cm}<{\centering}|p{1.9cm}<{\centering}|p{1.1cm}<{\centering}|p{1.9cm}<{\centering}
|p{1.1cm}<{\centering}|p{1.9cm}<{\centering}}
\hline
\multirow{2}{*}{M}&
\multicolumn{2}{c|}{$\frac{\|\mathbf{B}-\mathbf{B}_{h}\|_0}{\|\mathbf{B\|_0}}$ }&\multicolumn{2}{c|}{$\frac{\|\nabla\times\mathbf{B}-\nabla_h\times\mathbf{B}_{h}\|_0}{\|\nabla\times\mathbf{B}\|_0}$}
&\multirow{2}{*}{$||\nabla\cdot\mathbf{B}_{h}||_{0,\infty,\Omega}$}\cr\cline{2-5}
&error&order&error&order\cr
\cline{1-6}
$4$  & 1.3036e+00  & - &  1.7242e+00 &  - &  3.9403e-14\\
\hline
$8$  & 3.1903e-01  & 2.03 &  8.8334e-01 &  0.96  & 1.2132e-16\\
\hline
$16$  & 7.7396e-02 &  2.04  & 4.2322e-01 &  1.06  & 5.9822e-16\\
\hline
$32$  & 1.9814e-02  & 1.96  & 2.2287e-01  & 0.92 &  1.5168e-14\\
\hline

\end{tabular}
}
{
\begin{tabular}{p{0.5cm}<{\centering}|p{1.5cm}<{\centering}|p{0.5cm}<{\centering}
|p{1.5cm}<{\centering}|p{0.5cm}<{\centering}|p{1.5cm}<{\centering}|p{0.5cm}<{\centering}
|p{1.5cm}<{\centering}|p{0.5cm}<{\centering}}
\hline
\multirow{2}{*}{M}&
\multicolumn{2}{c|}{$\frac{\|T-T_{h}\|_0}{\|T\|_0}$  }&\multicolumn{2}{c|}{$\frac{\|\nabla T-\nabla_h T_{h}\|_0}{\|\nabla T\|_0}$}
&\multicolumn{2}{c|}{$\frac{\|p-p_{h}\|_0}{\|p\|_0}$}
&\multicolumn{2}{c}{$\frac{\|r-r_h\|_0}{\|r\|_0}$}\cr\cline{2-9}
&error&order&error&order&error&order&error&order\cr
\cline{1-9}
$4$  & 1.3145e-01 &  - &  2.6572e-01 &  -	& 3.0341e+00  & - & 2.6418e+00 &  -\\
\hline
$8$  & 3.3598e-02 &  1.96 & 1.3456e-01 &  0.98&  1.5205e+00 &  0.99 &1.4048e+00  & 0.91\\
\hline
$16$ &  8.1510e-03 &  2.04 &  6.5097e-02 &  1.04	&  7.8994e-01  & 0.94 &6.6614e-01 &  1.07\\
\hline
$32$ & 1.9265e-03 &  2.08  & 3.3348e-02 &  0.96 & 3.9773e-01 &  0.98& 3.3533e-01 &  0.99\\
\hline

\end{tabular}
}

\end{table}

\begin{table}[H]\label{linetable7}
\normalsize
\caption{Convergence results with  $k = 2$ for Example 2: $M\times M\times M$ meshes }\label{linetableK=2(l=2)for2}
\centering
\footnotesize
{
\begin{tabular}{p{2.0cm}<{\centering}|p{1.9cm}<{\centering}|p{1.1cm}<{\centering}|p{1.9cm}<{\centering}
|p{1.1cm}<{\centering}|p{1.9cm}<{\centering}}
\hline
\multirow{2}{*}{M}&
\multicolumn{2}{c|}{$\frac{\|\mathbf{u}-\mathbf{u}_{h}\|_0}{\|\mathbf{u\|_0}}$ }&\multicolumn{2}{c|}{$\frac{\|\nabla\mathbf{u}-\nabla_h\mathbf{u}_{h}\|_0}{\|\nabla\mathbf{u}\|_0}$}
&\multirow{2}{*}{$||\nabla\cdot\mathbf{u}_{h}||_{0,\infty,\Omega}$}\cr\cline{2-5}
&error&order&error&order\cr
\cline{1-6}

$4$ & 9.2796e-01 &  - &  1.3621e+00  & -&  1.5044e-13   \\
\hline
$8$ &  1.2375e-01 &  2.90 &  3.5075e-01  & 1.95 & 1.1446e-13 \\
\hline
$16$  & 1.5849e-02 &  2.96  & 8.5451e-02 & 2.03  & 4.9513e-15 \\
\hline
$32$ &  1.9771e-03 &  3.00 &  2.1015e-02  & 2.02 &  2.9984e-15\\
\hline

\end{tabular}
}

{
\begin{tabular}{p{2.0cm}<{\centering}|p{1.9cm}<{\centering}|p{1.1cm}<{\centering}|p{1.9cm}<{\centering}
|p{1.1cm}<{\centering}|p{1.9cm}<{\centering}}
\hline
\multirow{2}{*}{M}&
\multicolumn{2}{c|}{$\frac{\|\mathbf{B}-\mathbf{B}_{h}\|_0}{\|\mathbf{B\|_0}}$ }&\multicolumn{2}{c|}{$\frac{\|\nabla\times\mathbf{B}-\nabla_h\times\mathbf{B}_{h}\|_0}{\|\nabla\times\mathbf{B}\|_0}$}
&\multirow{2}{*}{$||\nabla\cdot\mathbf{B}_{h}||_{0,\infty,\Omega}$}\cr\cline{2-5}
&error&order&error&order\cr
\cline{1-6}
$4$  & 5.5091e-01  & - &  1.8659e+00 &  - &  2.3349e-13\\
\hline
$8$  & 6.3612e-02  & 3.05 &  4.5817e-01 &  2.02  & 8.4842e-14\\
\hline
$16$  & 7.7808e-03 &  3.00  & 1.2473e-01 &  1.88  & 4.7358e-15\\
\hline
$32$  & 1.0141e-03  & 2.93 & 3.1923e-02  & 1.97 &  2.3222e-15\\
\hline

\end{tabular}
}
{
\begin{tabular}{p{0.5cm}<{\centering}|p{1.5cm}<{\centering}|p{0.5cm}<{\centering}
|p{1.5cm}<{\centering}|p{0.5cm}<{\centering}|p{1.5cm}<{\centering}|p{0.5cm}<{\centering}
|p{1.5cm}<{\centering}|p{0.5cm}<{\centering}}
\hline
\multirow{2}{*}{M}&
\multicolumn{2}{c|}{$\frac{\|T-T_{h}\|_0}{\|T\|_0}$  }&\multicolumn{2}{c|}{$\frac{\|\nabla T-\nabla_h T_{h}\|_0}{\|\nabla T\|_0}$}
&\multicolumn{2}{c|}{$\frac{\|p-p_{h}\|_0}{\|p\|_0}$}
&\multicolumn{2}{c}{$\frac{\|r-r_h\|_0}{\|r\|_0}$}\cr\cline{2-9}
&error&order&error&order&error&order&error&order\cr
\cline{1-9}
$4$  & 8.6635e-02 &  - &  2.1682e-01 &  -	& 1.7040e+00  & - & 1.0295e+00 &  -\\
\hline
$8$  & 1.2193e-02 &  2.82 & 5.1053e-02 &  2.08&  4.2772e-01 &  1.99 &2.4570e-01  & 2.06\\
\hline
$16$ &  1.5038e-03 &  3.01&  1.2736e-02 &  2.00	&  1.0150e-01  & 2.07 &6.3467e-02 &  1.95\\
\hline
$32$ & 1.8960e-04 &  2.98  & 3.2012e-03 &  1.99 & 2.5328e-02 &  2.00& 1.5634e-02 &  2.02\\
\hline

\end{tabular}
}

\end{table}

\section{Conclusions}

In this paper, we have developed an HDG     method of arbitrary order
for the steady thermally   coupled incompressible
Magnetohydrodynamics flow.  The well-posedness of the discrete scheme has been established.
The method yields globally divergence-free  approximations of velocity and magnetic field, and is of optimal order convergence for the velocity, the magnetic field, the pressure, the magnetic pseudo-pressure, and temperature approximations.
Numerical experiments have verified the theoretical results.

\end{document}